\DeclareMathOperator{\coker}{coker}
\DeclareMathOperator{\diam}{diam}
\DeclareMathOperator{\ext}{ext}
\DeclareMathOperator{\Ext}{Ext}
\DeclareMathOperator{\HH}{H}
\DeclareMathOperator{\Hom}{Hom}
\DeclareMathOperator{\PGor}{PGor}
\DeclareMathOperator{\Proj}{Proj}
\DeclareMathOperator{\Spec}{Spec}
\DeclareMathOperator{\VV}{V}
\newtheorem{theorem}{Theorem}[section]
\newtheorem{lemma}[theorem]{Lemma}
\newtheorem{corollary}[theorem]{Corollary}
\newtheorem{definition}[theorem]{Definition}
\newtheorem{example}[theorem]{Example}
\newtheorem{proposition}[theorem]{Proposition}
\newtheorem{remark}[theorem]{Remark}
\newcommand{\pp}{{\mathbb P}}
\newcommand{\sE}{{\mathcal E}}
\newcommand{\sF}{{\mathcal F}}
\newcommand{\sH}{{\mathcal H}}
\newcommand{\sI}{{\mathcal I}}
\newcommand{\sN}{{\mathcal N}}
\newcommand{\sO}{{\mathcal O}}
\newcommand{\mfm}{\mathfrak m}
\newcommand{\proj}[1]{\mathbbm P^{#1}}
\newcommand{\eext}[3]{\sideset{_{#1}}{_{#2}^{#3}}\ext}
\newcommand{\EExt}[3]{\sideset{_{#1}}{_{#2}^{#3}}\Ext}
\newcommand{\HHom}[2]{\sideset{_{#1}}{_{#2}}\Hom}
\newcommand{\mmid}%
{\makebox[0pt][l]{$\shortmid$}\raisebox{1.7pt}[0pt][0pt]{$\shortmid$}}
\newcommand{\inup}{{\cup\mspace{-8.2mu}\mmid\mspace{8.2mu}}}
\begin{document}

\title{Liaison invariants and the Hilbert scheme of codimension 2 subschemes in $\proj{n+2}$}

\author{Jan O. Kleppe}


\date{\ }

\maketitle

\vspace*{-0.55in}
\begin{abstract}
  \noindent In this paper we study the Hilbert scheme ${\rm
    Hilb}^{p(v)}(\proj{})$ of equidimensional locally Cohen-Macaulay
  codimension 2 subschemes, with a special look to surfaces in $\proj{4}$ and
  3-folds in $\proj{5}$, and the Hilbert scheme stratification
  $\HH_{\gamma,\rho}$ of constant cohomology. For every $(X) \in {\rm
    Hilb}^{p(v)}(\proj{})$ we define a number $\delta_X$ in terms of the
  graded Betti numbers of the homogeneous ideal of $X$ and we prove that $1+
  \delta_X - \dim_{(X)}\HH_{\gamma,\rho}$ and $1+ \delta_X - \dim
  T_{\gamma,\rho}$ are CI-biliaison invariants where $T_{\gamma,\rho}$ is the
  tangent space of $\HH_{\gamma,\rho}$ at $(X)$. As a corollary we get a
  formula for the dimension of any generically smooth component of ${\rm
    Hilb}^{p(v)}(\proj{})$ in terms of $ \delta_X $ and the CI-biliaison
  invariant. Both invariants are equal in this case.

  Recall that, for space curves $C$, Martin-Deschamps and Perrin have proved
  the smoothness of the ``morphism'' $\phi : \HH_{\gamma,\rho} \rightarrow
  E_{\rho}$ := isomorphism classes of graded modules $M$ satisfying $\dim M_v
  = \rho(v)$, given by sending $C$ onto its Rao module. For surfaces $X$ in
  $\proj{4}$ we have two Rao modules $M_i \simeq \oplus H^i(\sI_X(v))$ of
  dimension $\rho_i(v)$, $\rho:= ( \rho_1, \rho_2 )$ and an induced extension
  $b \in \EExt {0}{}{2} (M_2,M_1)$ and a result of Horrocks and Rao saying
  that a triple $D := (M_1,M_2,b)$ of modules $M_i$ of finite length and an
  extension $b$ as above determine a surface $X$ up to biliaison. We prove
  that the corresponding ``morphism'' $\varphi: \HH_{\gamma,\rho} \to \VV_\rho
  = $ isomorphism classes of graded modules $M_i$ satisfying $\dim (M_i)_v =
  \rho_i(v)$ and commuting with $b$, is smooth, and we get a smoothness
  criterion for $ \HH_{\gamma,\rho}$, i.e. for the equality of the two
  biliaison invariants. Moreover we get some smoothness results for ${\rm
    Hilb}^{p(v)}(\proj{})$, valid also for 3-folds, and we give examples of
  obstructed surfaces and 3-folds. The linkage result we prove in this paper
  turns out to be useful in determining the structure and dimension of
  $\HH_{\gamma,\rho}$, and for proving the main biliaison theorem above.

\noindent \textbf{AMS Subject Classification.} 14C05, 14D15, 14M06, 14M07,
14B15, 13D02. 


\noindent \textbf{Keywords.} Hilbert scheme, surfaces in 4-space, 3-folds in
5-space, unobstructedness, graded Betti numbers, liaison, normal sheaf.
\end{abstract}
 \vspace*{-0.15in}
{\small
 \tableofcontents}
\thispagestyle{empty}

\section{Introduction.}

A main object of this paper is to find the dimension of the Hilbert scheme,
${\rm Hilb}^{p(v)}(\proj{})$, of equidimensional locally Cohen-Macaulay (lCM)
codimension 2 subschemes of $\proj{}:= \proj{n+2}$. As an initial ambitious
goal we look for a formula for the dimension of any reduced component $V$ of
the Hilbert scheme ${\rm Hilb}^{p(v)}(\proj{})$ in terms of the graded Betti
numbers of the homogeneous ideal $I_X$ of a general element $(X)$ of $V$.
Somehow we expect the matrices in the minimal resolution of $I_X$ to play a
role, but it seems that only the cohomology groups of $\sO_X$ contribute since
we succeed in reaching our goal up to a biliaison invariant! Indeed in this
paper we explicitly define an invariant $ \delta_X^{n+1}(-n-3)$ in terms of
the graded Betti numbers of $I_X$ and $ H_*^n(\sO_X)$ and we prove that
$$\dim V= 1+ \delta_X^{n+1}(-n-3) - {\rm sumext}(X)$$ where $ {\rm sumext}(X)$
is a CI-biliaison invariant (Corollary~\ref{thmeulerICMhigher}). In the case
$X$ is a curve $(n=1)$ with Hartshorne-Rao module $M$, we have
\begin{equation*} 
\ {\rm sumext}(X) = \sum_{i=0}^1 \eext 0Ri (M,M)  \ , \ 
\end{equation*}
and there is a similar, but much more complicated, formula in the surface case
(Remark~\ref{p3.4}). 

Let $\HH_{\gamma,\rho} \subseteq {\rm Hilb}^{p(v)}(\proj{})$ be the Hilbert
scheme whose $k$-points $(X)$ corresponds to equidimensional lCM codimension 2
subschemes $X$ of $\proj{n+2}$ with constant cohomology (see \cite{MDP1} for
the curve case). If $X$ is {\it any} equidimensional lCM codimension 2
subscheme of $\proj{}$, we define 
$ {\rm obsumext}(X)$ in the following way,
$$ {\rm obsumext}(X)= 1+ \delta_X^{n+1}(-n-3) - \dim_{(X)}
\HH_{\gamma,\rho}.$$ We define $ {\rm sumext}(X)$ by the same expression
provided we have replaced $\HH_{\gamma,\rho}$ by its tangent space,
$T_{\gamma,\rho}$, at $(X)$. Then we prove that $ {\rm sumext}(X)$ and $ {\rm
  obsumext}(X)$ are CI-biliaison invariants (Theorem~\ref{maint4.1}). Since
every arithmetically Cohen-Macaulay codimension 2 subscheme is in the liaison
class of a complete intersection (CI) by Gaeta's theorem, it follows that $
{\rm sumext}(X)= {\rm obsumext}(X) =0$ and that $ \dim_{(X)} {\rm
  Hilb}^{p(v)}(\proj{}) = 1+ \delta_X^{n+1}(-n-3)$ for $n>0$ if $X$ is
arithmetically Cohen-Macaulay (Corollary~\ref{themACM}).
Even though we do not prove the explicit
expression of $ {\rm sumext}(X)$ in terms the Rao modules of $X$ in general,
the theorem is motivated from the fact that the Rao modules are invariant under
biliaison up to shift. In fact it seems more effective to compute $ {\rm
  sumext}(X)$ and $ {\rm obsumext}(X)$ by considering a nice representative
$X'$ in its even liaison class, e.g. the minimal element, and to compute
$\delta_{X'}^{n+1}(-n-3),\ \dim_{(X')} \HH_{\gamma,\rho}$, and $ \dim
T_{\gamma,\rho}$ for $X'$.

Since the curve case of the results above is rather well understood
(\cite{MDP1}, \cite{krao}), we will in the present paper mostly concentrate on
the study of the Hilbert scheme $\HH(d,p,\pi)$ of surfaces of degree $d$ and
arithmetic (resp. sectional) genus $p$ (resp. $\pi$). Recall that, for space
curves $C$, 
Martin-Deschamps and Perrin 
proved the smoothness of the ``morphism'' $\phi : \HH_{\gamma,\rho}
\rightarrow E_{\rho}$: = isomorphism classes of graded $R$-modules $M$
satisfying $\dim M_v = \rho(v)$, given
by sending $C$ onto its Rao module. 
Earlier Rao proved that any graded $R$-module $M$ of finite length determines
the liaison class of a curve, up to dual and shift in the grading (\cite{R}).
Note that Rao's result is related to the surjectivity of $\phi$, while the
smoothness of $\phi$ implies infinitesimal surjectivity. For surfaces in
$\proj{4}$ there is a result in Bolondi's paper \cite{B2}, stating that a
triple $D := (M_1,M_2,b)$ of graded modules $M_i$ of finite length and an
extension $b \in \EExt {0}{}{2} (M_2,M_1)$ determine the biliaison class of a
surface $X$ such that $M_i \simeq \oplus H^i(\sI_X(v))$ modulo some shift in
the grading. The result is a consequence of the main theorem of \cite{R2} and
Horrocks' classification of stable vector bundles (\cite{Ho}), as mentioned by
Rao in \cite{R2}. Therefore it is natural to consider the stratification
$\HH_{\gamma,\rho}$ of $\HH(d,p,\pi)$ where now $\rho:= ( \rho_1, \rho_2 )$
and $\rho_i(v)=\dim H^i(\sI_X(v))$, and to ask for the smoothness of the
corresponding ``morphism'' $\varphi: \HH_{\gamma,\rho} \to \VV_\rho :=$
isomorphism classes of triples $(M_1,M_2,b)$ where $M_i$ are graded
$R$-modules which satisfy $\dim (M_i)_v = \rho_i(v)$ and where an isomorhpism
between triples is an isomorphism between the corresponding modules which
commutes with the extensions. We prove in section 5 that the answer is yes
(Theorem~\ref{t1.1}). 
As a corollary we get a smoothness criterion for $\HH_{\gamma,\rho}$
(Corollary~\ref{p1.3}, Remark~\ref{p3.4}), i.e. for the equality $ {\rm
  sumext}(X)= {\rm obsumext}(X)$ to hold. Note that since we do not prove that
the morphism $\varphi$ extends to a morphism of schemes, we only prove that
the corresponding morphism of the local deformation functors is formally
smooth. This, however, takes fully care of what we want.

In section 6 we determine the tangent space of $\HH_{\gamma,\rho}$ at $(X)$,
and we prove a local isomorphism $\HH_{\gamma,\rho} \simeq \HH(d,p,\pi)$ at
$(X)$ under some conditions (Proposition~\ref{dp3.1}, Remark~\ref{t3.7}).
Note, however, that if $X$ has seminatural cohomology, we know that
$\HH_{\gamma,\rho} \simeq \HH(d,p,\pi)$ at $(X)$ by the semicontinuity of $
\dim H^i(\sI_X(v))$ and this observation mostly suffices for our applications.
In section 7 we prove a useful linkage result (Theorem~\ref{t4.1}) which we
apply to determine the structure and the dimension of $\HH_{\gamma,\rho}$ and
to prove our main theorem on the biliaison invariants. In this section we also
give conditions for a linked surface to be e.g. non-generic, thus proving the
existence of surfaces with ``smaller'' cohomology in some cases
(Proposition~\ref{vanii}).

Since the technical problems in describing well the stratification of
$\HH(d,p,\pi)$ and the morphism $\phi$ are quite complicated (see \cite{K6}),
we don't follow up this trace for equidimensional lCM codimension 2 subschemes
$X \subseteq \proj{n+2}$ of dimension $n \ge 3$. Instead we only use our main
theorem on the biliaison invariance of ${\rm sumext}(X)$ and ${\rm
  obsumext}(X)$ together with some new results on the smoothness and the
dimension of ${\rm Hilb}^{p(v)}(\proj{})$ in our study of the Hilbert schemes
of e.g. 3-folds in section 9. We also give a vanishing criterion for
$h^1(\sN_X)$, but unfortunately, as in \cite{krao}, the results we get require
that the Hartshorne-Rao modules are rather ``small''. When the conditions of
these vanishing criteria do not hold, we give examples of obstructed surfaces
and 3-folds.

\textbf {Acknowledgment}. I heartily thank prof. G. Bolondi at Bologna for the
discussion with him on this topic. As the reader will see, especially for the
results in section 5 and 6, Bolondi's paper \cite{B2} is a main source of
ideas for the work presented here. It was prof. G. Bolondi who introduced me
to the idea of extending the results of \cite{B2}, as Martin-Deschamps and
Perrin do for space curves, to get a stratified description of the Hilbert
scheme $\HH(d,p,\pi)$, and who pointed out several interesting things to be
proved (see also \cite{K6}). Parts of the paper are also a natural
continuation of \cite{BM1} and \cite{BM2}. Moreover I warmly thank Hirokazu
Nasu at Chiba for his clarifying comments and useful Macaulay 2 computations
to the 
obstructed surface in Example~\ref{obstructed}, 
which led me to include examples of {\it smooth} obstructed
surfaces (Example~\ref{obstructedsm}).
%

\section{Notations and terminology.}
A surface (resp. curve) $X$ is an equidimensional, locally Cohen-Macaulay
subscheme (lCM) of $\proj{4}$ (resp. $ \proj{3}$) of dimension 2 (resp. 1)
with sheaf ideal $\sI_X$ and normal sheaf $\sN_X = \Hom_{\sO_{\proj{}}}
(\sI_X,\sO_X)$. If $\sF$ is a coherent $\sO_{\proj{}}$-Module, we let
$H^i(\sF) = H^i(\proj{},\sF)$, $H_*^i(\sF) = \oplus_v H^i(\sF(v))$ and
$h^i(\sF) = \dim H^i(\sF)$, and we denote by $\chi(\sF) = \Sigma (-1)^i
h^i(\sF)$ the Euler-Poincar\'{e} characteristic. Then $p(v)=\chi(\sO_X(v))$ is
the Hilbert polynomial of $X$. Put $n=\dim X$ and
\begin{equation*}
  \begin{aligned}
    s(X) & = \min \{v \arrowvert h^0(\sI_X(v)) \neq 0 \}, \\
    e(X) & = \max \{ v \arrowvert h^n(\sO_X(v)) \neq 0 \}.
  \end{aligned}
\end{equation*}
Let 
$I = I_X = H_*^0(\sI_X)$ be the homogeneous ideal. $I$ is a graded module over
the polynomial ring $R = k[X_0,X_1,..,X_{n+2}]$, where $k$ is supposed to be
algebraically closed (and of characteristic zero in section 5, 6 and say in
Example~\ref{obstructedsm} since we there use results and methods of papers
relying on this assumption). The postulation $\gamma$ of $X$ is the function
defined over the integers by $\gamma(v) = \gamma_X(v) = h^0 (\sI_X(v))$.

$X$ is \textit{unobstructed} if the Hilbert scheme ${\rm
  Hilb}^{p(v)}(\proj{n+2})$ (cf.\! \cite{G}) is smooth at the corresponding
point $(X)$, otherwise $X$ is
obstructed. 
A subscheme of $\proj{n+2}$ belonging to a sufficiently small open
irreducible subset of ${\rm Hilb}^{p(v)}(\proj{n+2})$ (small enough to satisfy
all the openness properties which we want it to have) is called a {\it
  generic} subscheme of ${\rm Hilb}^{p(v)}(\proj{n+2})$, and accordingly,
if we state that a generic subscheme has a certain property, then there is a
non-empty open irreducible 
subset of ${\rm Hilb}^{p(v)}(\proj{n+2})$ of subschemes having this property.

In the case of curves we put $\HH(d,g) = {\rm Hilb}^{p(v)}(\proj{n+2})$
provided $p(v) = dv +1-g$. Moreover we let $M = M(C):=H_*^1(\sI_C)$ be the
deficiency or Hartshorne-Rao module of the curve $C$. The deficiency function
$\rho$ is the defined by $\rho(v)= h^1 (\sI_C(v))$. Let
$\HH(d,g)_{\gamma,\rho}$ (resp. $\HH(d,g)_{\gamma}$) denote the subscheme of
$\HH(d,g)$ of curves with constant cohomology given by $\gamma$ and $\rho$,
(resp. constant postulation $\gamma$), see \cite{MDP1}. Let ${Def}_{M}$ be the
local deformation functor consisting of graded deformations $M_S$ of $M$ to $
\proj{3} \times \Spec (S)$ modulo graded isomorphisms of $M_S$ over $M$, where
$S$ is a local artinian $k$-algebra with residue field $k$, i.e. such that $M_S$
is $S$-flat and $M_S \otimes k = M$.

For a surface $X$ we define the arithmetic genus $p$ by $p = \chi (\sO_X)-1$,
while the sectional genus $\pi$ is given by $\chi (\sO_X(1)) = d-\pi +1+\chi
(\sO_X)$. By Riemann-Roch's theorem we have
\begin{equation} \label{hilbpolyX} p(v)= \chi (\sO_X(v)) = \frac{1}{2} dv^2
  -(\pi -1 -\frac 12 d)v + \chi (\sO_X).
\end{equation}
Put $\HH(d,p,\pi) = {\rm Hilb}^{p(v)}(\proj{n+2})$ in this case. Moreover let
$M_i = M_i(X)$ be the deficiency modules $H_*^i(\sI_X)$ for i = 1,2. The
deficiency $\rho = (\rho_1,\rho_2)$ of $X$ is the function defined over the
integers by $\rho(v) = \rho_X(v) = (\rho_1(v),\rho_2(v))$ where $ \rho_i(v) =
h^i (\sI_X(v))$ for $i = 1,2$. Let $\HH_{\gamma,\rho}=
\HH(d,p,\pi)_{\gamma,\rho}$ (resp. $\HH_{\gamma}=\HH(d,p,\pi)_{\gamma}$)
denote the subscheme of $\HH(d,p,\pi)$ of surfaces with constant cohomology
given by $\gamma$ and $\rho$, (resp. constant postulation
$\gamma$). 

For the notion of linkage, we refer to \cite{MIG}. Note that liaison (resp.
even liaison or biliaison) is the equivalence relation generated by linkage
(resp. direct linkages in an even number of steps).

For any graded $R$-module $N$, we have the right derived functors
$H_\mfm^i(N)$ and $\EExt v\mfm i (N,-)$ of $\Gamma_\mfm (N) = \oplus_v \ker
(N_v \to \Gamma (\proj{},\tilde N(v)))$ and $\Gamma_\mfm (\Hom_R (N,-))_v$
respectively (cf. \cite{SGA2}, exp. VI or \cite{H2}) where $\mfm =
(X_0,..,X_{n+2})$. We use small letters for the $k$-dimension and subscript
$v$ for the homogeneous part of degree $v$, e.g. $\eext v\mfm i (N_1,N_2) =
\dim \EExt v\mfm i (N_1,N_2)$.

Let $N_1$ and $N_2$ be graded $R$-modules of finite type. As in \cite{krao} we
need the spectral sequence
\begin{equation}\label{mspect}
  E_2^{p,q} = \EExt vRp (N_1, H_{\mfm}^q (N_2)) \Rightarrow
  \EExt v{\mfm}{p+q} (N_1, N_2)
\end{equation}
(\cite{SGA2}, exp. VI) and the duality isomorphism
\begin{equation}\label{mduality}
  \EExt v{\mfm}i (N_2, N_1) \cong \EExt {-v-n-3}R{n+3-i} (N_1, N_2)^\vee  , \ \
  i, v \in \mathbb Z
\end{equation}
where $(-)^\vee = \Hom_k( - , k)$ (cf. \cite{K5}, Thm. 1.1, see \cite{K2},
Thm. 2.1.4 for a full proof). Moreover there is a long exact sequence
\begin{equation}\label{seq}
  \to \EExt v{\mfm}i (N_1, N_2) \to \EExt vRi (N_1, N_2) \to
  \Ext_{\sO_\proj{}}^i (\tilde N_1, \tilde N_2 (v)) \to \EExt v{\mfm}{i+1}
  (N_1, N_2) \to
\end{equation}
(\cite{SGA2}, exp.\ VI) which at least for equidimensional, lCM subschemes of
codimension 2 (with $n > 0$) relate the deformation theory of $X$, described
by $H^{i-1} ({\sN_X}) \simeq \Ext_{\sO_\proj{}}^i (\tilde I, \tilde I )$ for
$i = 1,2$ (cf. \cite{K2}, Rem. 2.2.6), to the deformation theory of the
homogeneous ideal $I = I_X$, described by $\EExt 0Ri (I,I)$, in the following
exact sequence
\begin{equation}\label{mseq}
  \EExt vR1 (I,I) \hookrightarrow  H^0 (\sN_X (v)) \rightarrow
  \EExt v{\mfm}2 (I,I) \xrightarrow{\alpha} \EExt vR2 (I,I) \rightarrow 
  H^1 (\sN_X (v)) \rightarrow \EExt v{\mfm}3 (I,I) \rightarrow
\end{equation}
see \cite{W1} or \cite{F} for related works on such deformation functors.

\section{The dimension of $\HH(d,g)$ and biliaison invariants.}
In this section we consider the Hilbert scheme, $\HH(d,g)$, of curves in
$\proj{3}$ and results which we would like to generalize to surfaces in
$\proj{4}$. 
We will focus on the dimension of the Hilbert schemes and some biliaison
invariants which we naturally detect from this point of view.

Recall that $\chi(\sN_C(v)) = 2dv + 4d$ and that $\chi(\sN_C) = 4d$ is a lower
bound for $\dim_{(C)}\HH(d,g)$. For this reason the number $4d$ is often
called the expected dimension of $\HH(d,g)$ even though it often does not give
the correct dimension of $\HH(d,g)$ at $(C)$. E.g. for ACM curves the
dimension is always false if $e(C) \ge s(C)$.

To give a more reliable estimate for the dimension of the components of
$\HH(d,g)$, we have found it convenient to introduce the following invariant,
defined in terms of the numbers $n_{j,i}$ appearing in a minimal resolution of
the homogeneous ideal $I_C$ of $C$:
\begin{equation} \label{resoluC} 0 \to \bigoplus_{i=1}^{r_3} R(-n_{3,i}) \to
  \bigoplus_{i=1}^{r_2} R(-n_{2,i}) \to \bigoplus_{i=1}^{r_1} R(-n_{1,i}) \to
  I_C \to 0 \ .
\end{equation}
Note that we can define the graded Betti numbers, $\beta_{j,k}$, by just
putting $ \oplus_{k=1}^{\infty} R(-k)^{\beta_{j,k}}:= \oplus_{i=1}^{r_1}
R(-n_{j,i})$.
\begin{definition} \label{deltaC}
  If $C$ is a curve in $\proj{3}$, we let
\begin{equation*}
  \delta_C^{j}(v):= \sum_i h^j (\sI_C (n_{1,i}+v)) - \sum_i h^j (\sI_C
  (n_{2,i}+v))+ \sum_i h^j (\sI_C (n_{3,i}+v)). 
\end{equation*}
\end{definition}
Put $ \delta^{j}(v)= \delta_C^{j}(v)$. Then in \cite{krao} we proved the
following result (Lem.\! 2.2 of \cite{krao})

\begin{lemma}\label{eulerIC}
  Let $C$ be any curve of degree $d$ in $\proj{3}$. Then the following
  expressions are equal
  \begin{equation*} \label{lowerboundIC} {_0\!\ext_R^1 }(I_C,I_C )-
    {_0\!\ext_R^2 }(I_C,I_C ) = 1 - \delta^0(0) = 4d + \delta^2(0) -
    \delta^1(0) = 1 + \delta^2(-4) - \delta^1(-4).
\end{equation*}
\end{lemma}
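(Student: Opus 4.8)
The plan is to treat the three equalities in turn, using the minimal free resolution \eqref{resoluC} to convert each side into cohomological data of $\sI_C$. Throughout I write $F_j=\bigoplus_i R(-n_{j,i})$ and use $\dim(I_C)_m=h^0(\sI_C(m))$.

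\emph{First equality.} I would compute $\sum_i(-1)^i\eext 0Ri(I_C,I_C)$ by applying $\Hom_R(-,I_C)$ to \eqref{resoluC} and passing to the degree-$0$ part. Taking a graded piece is exact, so the Euler characteristic of the resulting complex equals $\sum_i(-1)^i\eext 0Ri(I_C,I_C)$ on one hand, and on the other hand equals $\sum_j(-1)^{j-1}\sum_i\dim(I_C)_{n_{j,i}}=\delta^0(0)$, since $\Hom_R(R(-n_{j,i}),I_C)_0=(I_C)_{n_{j,i}}$. It then suffices to observe that $\eext 0R0(I_C,I_C)=\dim\Hom_R(I_C,I_C)_0=1$ (a degree-$0$ endomorphism of the rank-one torsion-free module $I_C$ is multiplication by a constant, as $R$ is normal) and that $\eext 0Ri(I_C,I_C)=0$ for $i\ge3$ (the projective dimension of $I_C$ is at most $2$ by \eqref{resoluC}). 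Rearranging $1-\eext 0R1(I_C,I_C)+\eext 0R2(I_C,I_C)=\delta^0(0)$ yields the first equality. The same computation kept general in $v$ gives the identity $\delta^0(v)=\sum_i(-1)^i\eext vRi(I_C,I_C)$, which I will reuse below.

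\emph{Middle equality.} I would set $E(v):=\delta^0(v)-\delta^1(v)+\delta^2(v)$, noting that $\delta^3\equiv0$ because $h^3(\sI_C(m))=0$ for all $m$ (from $0\to\sI_C\to\sO_{\proj{3}}\to\sO_C\to0$ and $\dim C=1$), so $E(v)=\sum_j(-1)^{j-1}\sum_i\chi(\sI_C(n_{j,i}+v))$. By Riemann--Roch $\chi(\sI_C(m))=\tfrac16m^3+m^2+(\tfrac{11}{6}-d)m+g$. Matching coefficients of this polynomial with the Hilbert polynomial $\sum_j(-1)^{j-1}\sum_i\binom{m-n_{j,i}+3}{3}$ of $I_C$ read off from \eqref{resoluC} forces $r_1-r_2+r_3=1$, $S_1=0$, $S_2=-2d$, $S_3=6-12d-6g$, where $S_k=\sum_j(-1)^{j-1}\sum_i n_{j,i}^k$. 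Feeding these into $E(0)=\tfrac16S_3+S_2+(\tfrac{11}{6}-d)S_1+g(r_1-r_2+r_3)$ gives $E(0)=1-4d$, which is the middle equality after rearrangement. The identical evaluation at $v=-4$ gives $E(-4)=4d-1$.

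\emph{Last equality and main obstacle.} Combining $E(0)=1-4d$ and $E(-4)=4d-1$, the last equality is equivalent to $\delta^0(0)-\delta^0(-4)=1-4d$. Here I would invoke $\delta^0(v)=\sum_i(-1)^i\eext vRi(I_C,I_C)$, apply the duality \eqref{mduality} in the form $\eext{-4}{R}{4-k}(I_C,I_C)=\eext{0}{\mfm}{k}(I_C,I_C)$ to rewrite $\delta^0(-4)=\sum_k(-1)^k\eext{0}{\mfm}{k}(I_C,I_C)$, and then use that the alternating sum over the long exact sequence \eqref{seq} vanishes to obtain $\delta^0(0)-\delta^0(-4)=\sum_i(-1)^i\ext^i_{\sO_{\proj{3}}}(\tilde I,\tilde I)$. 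Finally, from $\ext^0_{\sO_{\proj{3}}}(\sI_C,\sI_C)=1$, the identifications $\Ext^i_{\sO_{\proj{3}}}(\tilde I,\tilde I)\cong H^{i-1}(\sN_C)$ for $i=1,2$, the vanishing for $i\ge3$, and $h^i(\sN_C)=0$ for $i\ge2$, this alternating sum equals $1-\chi(\sN_C)=1-4d$. The delicate step, and the one I expect to be the main obstacle, is precisely this last input: controlling the local-to-global $\Ext$-sheaves $\sExt^q_{\sO_{\proj{3}}}(\sI_C,\sI_C)$ for a merely locally Cohen--Macaulay (possibly non-lci) curve, so as to justify both $\Ext^i_{\sO_{\proj{3}}}(\sI_C,\sI_C)=0$ for $i\ge3$ and the normal-sheaf identification. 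This is exactly where the lCM codimension-$2$ hypothesis is essential.
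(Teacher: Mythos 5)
Your argument is correct, but it follows a partly different path from the paper's. (The paper does not reprove this lemma; it quotes Lem.\ 2.2 of \cite{krao}, whose method is the one sketched for the surface analogue in the proof of Proposition~\ref{dp0.3}.) The first equality is treated identically in both: apply $\Hom_R(-,I_C)$ to \eqref{resoluC} in degree $0$ and take Euler characteristics, using $\Hom_R(I_C,I_C)\simeq R$. For the remaining equalities the paper stays inside the local-cohomology formalism: duality \eqref{mduality} converts $\delta^0(-4)$ into $\sum_k(-1)^k\eext 0{\mfm}k(I_C,I_C)$, the spectral sequence \eqref{mspect} identifies that alternating sum with $\delta^1(0)-\delta^2(0)$ (and, run at $v=-4$, identifies $\delta^0(0)$ with $\delta^1(-4)-\delta^2(-4)$), and \eqref{seq} together with $\chi(\sN_C)=4d$ supplies the $4d$. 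You instead obtain the middle equality, and the value $E(-4)=4d-1$, by a purely numerical Riemann--Roch/Hilbert-polynomial computation: your relations $r_1-r_2+r_3=1$, $S_1=0$, $S_2=-2d$, $S_3=6-12d-6g$ are the curve analogue of \eqref{nijX}, i.e.\ exactly the technique the paper uses to prove \eqref{hilbpolyNX}; you then invoke duality and \eqref{seq} only once, to prove $\delta^0(0)-\delta^0(-4)=1-\chi(\sN_C)$. Your route avoids \eqref{mspect} altogether and is more elementary; what the paper's route buys is a term-by-term identification of the groups $\EExt 0{\mfm}k(I_C,I_C)$ with deficiency-module data (not just an Euler-characteristic identity), which is reused later, e.g.\ $\EExt 0R3(I,I)\cong \HHom{-5}R(I,M_1)^{\vee}$ in the discussion preceding Proposition~\ref{thmeulerIX}.

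Two repairs are needed, neither serious. First, your justification of $\delta^3\equiv 0$ is false as stated: from $0\to\sI_C\to\sO_{\proj{3}}\to\sO_C\to 0$ one gets $H^3(\sI_C(m))\cong H^3(\sO_{\proj{3}}(m))$, which is \emph{nonzero} for $m\le -4$. What your argument actually uses is only $\delta^3(0)=\delta^3(-4)=0$, and this does hold because every twist occurring there satisfies $n_{j,i}+v\ge 1-4=-3$ (minimal generators have degree $\ge s(C)\ge 1$, so all $n_{j,i}\ge 1$). Second, the input you flag as the main obstacle is available from the paper's own toolkit, so it is not really an obstacle: the identifications $\Ext_{\sO_{\proj{}}}^i(\tilde I,\tilde I)\cong H^{i-1}(\sN_C)$ for $i=1,2$ and ${\sH}om_{\sO_{\proj{}}}(\sI_C,\sI_C)\cong\sO_{\proj{}}$ are quoted before \eqref{mseq} (from \cite{K2}, Rem.\ 2.2.6, valid for equidimensional lCM codimension $2$ subschemes), and the vanishing of $\Ext_{\sO_{\proj{}}}^i(\tilde I,\tilde I)$ for $i\ge 3$ needs no local-to-global analysis: since the projective dimension of $I_C$ is at most $2$, the sequence \eqref{seq} gives an injection $\Ext_{\sO_{\proj{}}}^i(\tilde I,\tilde I)\hookrightarrow \EExt 0{\mfm}{i+1}(I_C,I_C)\cong \EExt {-4}R{3-i}(I_C,I_C)^{\vee}$, and the right-hand side vanishes for $i\ge 3$.
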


\begin{remark} \label{remeulerIC} Comparing with the results and notations of
  \cite{MDP1} we recognize $1- \delta^{0}(0)$ as $\delta_{\gamma}$ and
  $\delta^{1}(-4)$ as $\epsilon_{\gamma ,\delta}$ in their terminology. By
  Lemma~\ref{eulerIC} it follows that the dimension of the Hilbert scheme
  $\HH_{\gamma,M}$ of constant postulation and Rao module, which they show is
  $\delta_{\gamma} + \epsilon_{\gamma,\delta} -{_0\!\hom}(M,M )$ (Thm. 3.8,
  page 171), is also equal to $1+\delta^{2}(-4)- {_0\!\hom}(M,M)$.
\end{remark}
Note that the difference of the $ {\ext }$-numbers in Lemma~\ref{eulerIC} is a
lower bound for $ \dim O_{\HH(d,g)_{\gamma},(C)}$ (\cite{krao}, proof of
Thm.\! 2.6 (i)). Mainly since $\HH(d,g)_{\gamma} $ is a subscheme of
$\HH(d,g)$, we used this lower bound in \cite{K08}, Thm.\! 24, to prove the
following result

\begin{theorem}\label{thmeulerIC}
  Let $C$ be a curve in $\proj{3}$ and let $ \delta^{j}(v)= \delta_C^{j}(v)$
  for any $j$ and $v$. Then the dimension of $\HH(d,g)$ at $(C)$ satisfies
\begin{equation*}
  \dim_{(C)}\HH(d,g) \ge  1 -
  \delta^0(0) = 4 d +  \delta^2(0) - \delta^1(0).
\end{equation*}
Moreover if $C$ is a generic curve of a generically smooth component $V$ of $
\HH(d,g)$ and $M= H_{*}^1(\sI_C)$, then
$$\dim V = 4 d + \delta^2(0) 
- \delta^1(0) +\ {_{-4}\!\hom_R}(I_C , M) $$ where ${_{-4}\!\Hom_R}(I_C , M)$
is the kernel of the map $$ \bigoplus_i H^1(\sI_C(n_{1,i}-4))
\rightarrow \bigoplus_i H^1(\sI_C(n_{2,i}-4)) $$ induced by the
corresponding map in \eqref{resoluC}.
\end{theorem}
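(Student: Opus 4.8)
The plan is to prove the lower bound first and then the dimension formula for a generic curve of a generically smooth component.

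\medskip

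\textbf{Step 1: The lower bound.} By Lemma~\ref{eulerIC} we already know the equality
$$ {_0\!\ext_R^1}(I_C,I_C) - {_0\!\ext_R^2}(I_C,I_C) = 1 - \delta^0(0) = 4d + \delta^2(0) - \delta^1(0), $$
so it suffices to show that this difference of $\ext$-numbers is a lower bound for $\dim_{(C)}\HH(d,g)$. The paper tells me (in the remark immediately after Lemma~\ref{eulerIC}) that this difference is a lower bound for $\dim O_{\HH(d,g)_\gamma,(C)}$, the local dimension of the constant-postulation stratum. Since $\HH(d,g)_\gamma$ is a subscheme of $\HH(d,g)$, we get
$$ \dim_{(C)}\HH(d,g) \ge \dim_{(C)}\HH(d,g)_\gamma \ge {_0\!\ext_R^1}(I_C,I_C) - {_0\!\ext_R^2}(I_C,I_C), $$
which is exactly the claimed bound. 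The deformation-theoretic input for the stratum bound is that $\HH(d,g)_\gamma$ has tangent space of dimension ${_0\!\ext_R^1}(I_C,I_C)$ and obstructions landing in ${_0\!\ext_R^2}(I_C,I_C)$, whence the difference bounds the local dimension; I would cite \cite{krao}, proof of Thm.~2.6(i), for this.

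\medskip

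\textbf{Step 2: The formula at a generic point of a generically smooth component.} Now suppose $C$ is generic in a generically smooth component $V$. Generic smoothness means that at $(C)$ the Hilbert scheme $\HH(d,g)$ is smooth of dimension $\dim V$, so $\dim V = h^0(\sN_C) = \ext^1_{\sO_\proj{}}(\tilde I,\tilde I)$. The strategy is to compute $h^0(\sN_C)$ by comparing it with the stratum dimension via the exact sequence \eqref{mseq}. From that sequence, the tangent space to the Hilbert scheme sits in
$$ \EExt 0R1(I,I) \hookrightarrow H^0(\sN_C) \to \EExt 0\mfm2(I,I) \xrightarrow{\alpha} \EExt 0R2(I,I), $$
so $h^0(\sN_C) = {_0\!\ext^1_R}(I,I) + \dim\ker(\alpha)$. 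The term ${_0\!\ext^1_R}(I,I)$ is the tangent dimension of the constant-postulation stratum; subtracting the lower bound from Step~1, I expect the correction $\dim\ker(\alpha)$ to match, up to the generic smoothness collapsing everything to equalities, the quantity ${_{-4}\!\hom_R}(I_C,M)$.

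\medskip

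\textbf{Step 3: Identifying the correction term.} The key is to identify $\dim\ker(\alpha)$ — or whatever correction appears — with ${_{-4}\!\Hom_R}(I_C,M)$, the kernel of the map $\bigoplus_i H^1(\sI_C(n_{1,i}-4)) \to \bigoplus_i H^1(\sI_C(n_{2,i}-4))$ induced by \eqref{resoluC}. To do this I would apply the functor $\Hom_R(-,M)$ to the minimal resolution \eqref{resoluC} of $I_C$, where $M = H^1_*(\sI_C)$. Since $M$ has finite length, $\Hom_R(R(-n_{j,i}),M)_v = M_{n_{j,i}+v} = H^1(\sI_C(n_{j,i}+v))$, so in degree $v=-4$ the group ${_{-4}\!\Hom_R}(I_C,M)$ is precisely the kernel of the induced map on these $H^1$-terms, matching the statement's description. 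The remaining work is to connect this $\Hom$-kernel to the $\ext$-data through the spectral sequence \eqref{mspect} and duality \eqref{mduality}: duality at $v=-4$, $i$, relates $\EExt{0}\mfm{i}(I,I)$ to $\EExt{-4}R{4-i}(I,I)^\vee$, and the $E_2$-terms $\EExt{-4}Rp(I,H^q_\mfm(I))$ involve $H^1_\mfm(I) = M$ and hence the groups $\Hom_R(I_C,M)$ in degree $-4$.

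\medskip

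\textbf{Main obstacle.} The hard part will be Step~3: carefully tracking the correction term through the spectral sequence \eqref{mspect} and the duality \eqref{mduality} to show that the generically smooth hypothesis forces the relevant differentials and obstruction maps to behave so that $h^0(\sN_C)$ exceeds the lower bound by exactly ${_{-4}\!\hom_R}(I_C,M)$. In particular I must verify that generic smoothness makes the inequality in Step~1 an equality at the level of the constant-postulation stratum and controls the image of $\alpha$, so that the only surviving contribution is the $\Hom$-kernel identified above. Pinning down this bookkeeping — and confirming the shift by $-4$ (coming from $n+3 = 4$ in the duality for $\proj{3}$) lands exactly on the degree where $M$ contributes — is where the genuine content lies.
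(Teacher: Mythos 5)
Your Step 1 is exactly the paper's argument for the lower bound and is fine. The gap is in Steps 2--3: you have mislocated the correction term, and the ``degree bookkeeping'' you postpone to Step 3 is precisely where the plan breaks. By the spectral sequence \eqref{mspect} (using $H_{\mfm}^0(I)=H_{\mfm}^1(I)=0$ and $H_{\mfm}^2(I)\cong M$), the \emph{source} of $\alpha$ is $\EExt 0{\mfm}2 (I,I) \cong \HHom 0R (I,M)$, a degree-\emph{zero} Hom group; whereas duality \eqref{mduality} identifies the \emph{target}: $\EExt 0R2 (I,I)^{\vee} \cong \EExt {-4}{\mfm}2 (I,I) \cong {_{-4}\!\Hom_R}(I,M)$. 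So ${_{-4}\!\hom_R}(I_C,M)$ equals $\eext 0R2 (I,I)$, the dimension of the target of $\alpha$, not $\dim\ker\alpha$. Indeed, combining Lemma~\ref{eulerIC} with this identification, the value asserted by the theorem is $\bigl(\eext 0R1 (I,I) - \eext 0R2 (I,I)\bigr) + \eext 0R2 (I,I) = \eext 0R1 (I,I)$, while your decomposition gives $\dim V = h^0(\sN_C) = \eext 0R1 (I,I) + \dim\ker\alpha$. Hence what must be proved is exactly $\ker\alpha = 0$; your expectation $\dim\ker\alpha = {_{-4}\!\hom_R}(I_C,M)$ would overshoot the theorem by $\eext 0R2 (I,I)$ whenever that group is nonzero, which is precisely the interesting case (e.g.\ a generic curve with $\diam M =1$ and $\beta_{1,c+4}\neq 0$, treated in the remark after Remark~\ref{thm26}, where the correction number is $r\beta_{1,c+4}\neq 0$).

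Moreover, the vanishing $\ker\alpha=0$ is not a formal consequence of ``generic smoothness collapsing everything to equalities''; it is the one place where genericity genuinely enters, via the argument in the last part of the proof of Theorem~\ref{thmeulerICM}. By semicontinuity of $h^0(\sI_C(v))$, a generic $(C)$ in a generically smooth component has an open neighborhood of $\HH(d,g)$ contained (set-theoretically) in $\HH(d,g)_{\gamma}$; since $\HH(d,g)$ is smooth at $(C)$ this forces a local isomorphism $\HH(d,g)_{\gamma} \cong \HH(d,g)$ at $(C)$, whence equality of tangent spaces $\eext 0R1 (I,I) = h^0(\sN_C)$, i.e.\ $\ker\alpha = 0$. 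With that step supplied, the proof closes as the paper intends: $\dim V = \eext 0R1 (I,I) = 4d + \delta^2(0) - \delta^1(0) + \eext 0R2 (I,I)$ by Lemma~\ref{eulerIC}, and $\eext 0R2 (I,I) = {_{-4}\!\hom_R}(I_C,M)$ by the duality-plus-spectral-sequence computation; your description of ${_{-4}\!\Hom_R}(I_C,M)$ as the kernel of $\oplus_i H^1(\sI_C(n_{1,i}-4)) \to \oplus_i H^1(\sI_C(n_{2,i}-4))$, obtained by applying $\Hom_R(-,M)$ to \eqref{resoluC}, is correct. So all the pieces appear in your proposal, but they are wired together incorrectly, and the actual genericity mechanism is missing.
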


\begin{remark} \label{thm26} Let $C$ be any curve in $\proj{3}$ and suppose $
  {_{-4}\!\Hom_R}(I_C,M) = {_{0}\!\Hom_R}(I_C , M)=0$. Then $C$ is
  unobstructed and the lower bound of Theorem~\ref{thmeulerIC} is equal to $
  \dim_{(C)}\HH(d,g)$ by Thm.\! 2.6 of \cite{krao}.
\end{remark}

\begin{remark} Let $C$ be any curve in $\proj{3}$. \\
  {\rm (i)} If $M=0$, then $\delta^1(0)=0$ and we can use Remark~\ref{thm26}
  to see that $C$ is unobstructed and that the lower bound of
  Theorem~\ref{thmeulerIC} is equal to $
  \dim_{(C)}\HH(d,g)$. 
  This coincides with \cite{El}.\\
  {\rm (ii)} If $\diam M=1$, $\dim M = r$ and $C$ is a generic curve, then $C$
  is unobstructed by \cite{krao} Cor.\! 1.6 and the lower bound is equal to $4
  d + \delta^2(0) + r \beta_{2,c}$. Indeed $ r \beta_{1,c}=0$ for a generic
  curve by \cite{krao}, Cor.\! 4.4. Moreover in this case the ``correction''
  number $ {_{-4}\!\hom_R}(I_C , M)$ is equal to $r \beta_{1,c+4}$. Hence we
  get
  $$\dim V = 4 d + \delta^2(0) + r( \beta_{2,c} + \beta_{1,c+4}). $$
  This coincides with the dimension formula of \cite{krao}, Thm.\! 3.4. 
\end{remark}

Theorem~\ref{thmeulerIC} is a consequence of the inclusion $\HH(d,g)_{\gamma}
\hookrightarrow \HH(d,g)$ of schemes. One may try the same argument for the
inclusion $\HH(d,g)_{\gamma,\rho} \hookrightarrow \HH(d,g)$ since we also for
these schemes know tangent and obstruction spaces. This leads to
\begin{theorem}\label{thmeulerICM}
  Let $C$ be a curve in $\proj{3}$ and $M= H_{*}^1(\sI_C)$. Then the dimension
  of $\HH(d,g)$ at $(C)$ satisfies
\begin{equation*}
  \dim_{(C)}\HH(d,g) \ge  1+\delta^{2}(-4)-  \sum_{i=0}^2 \eext 0Ri (M,M).
\end{equation*}
Moreover if $C$ is a generic curve of a generically smooth component $V$ of $
\HH(d,g)$, then
$$\dim V = 4 d + \delta^2(0) 
- \delta^1(0)+ \delta^1(-4) - \sum_{i=0}^1 \eext 0Ri (M,M)= 1+\delta^{2}(-4)-
\sum_{i=0}^1 \eext 0Ri (M,M) .$$
\end{theorem}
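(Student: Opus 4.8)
The plan is to mirror the strategy used for Theorem~\ref{thmeulerIC}, but now working with the finer stratification $\HH(d,g)_{\gamma,\rho}$ in place of $\HH(d,g)_{\gamma}$. First I would identify the tangent and obstruction spaces of the stratification $\HH(d,g)_{\gamma,\rho}$ of constant postulation \emph{and} constant Rao module. The stratum $\HH(d,g)_{\gamma,\rho}$ fibers (via the morphism $\phi$ of \cite{MDP1}) over $E_{\rho}$, so its deformation theory is governed by the deformation theory of the pair (curve, module) keeping the module isomorphism class fixed; morally the tangent space is the subspace of $H^0(\sN_C)$ that induces the trivial deformation of $M$, and the obstructions land in a corresponding $\Ext^2$-type space. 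The governing group for deformations of the module $M$ itself is $\sum_{i=0}^{2}\eext 0Ri (M,M)$, which is why that sum appears in the lower bound: subtracting the module's deformation-and-obstruction contribution from the ambient count yields the dimension estimate for the stratum, and the inclusion $\HH(d,g)_{\gamma,\rho}\hookrightarrow\HH(d,g)$ then gives a lower bound for $\dim_{(C)}\HH(d,g)$.

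The key computation is to rewrite this lower bound in the clean form $1+\delta^2(-4)-\sum_{i=0}^2\eext 0Ri(M,M)$. Here I would invoke Lemma~\ref{eulerIC}, which already supplies the identity $4d+\delta^2(0)-\delta^1(0)=1+\delta^2(-4)-\delta^1(-4)$, together with the interpretation in Remark~\ref{remeulerIC} of $\delta^1(-4)$ as the Martin-Deschamps--Perrin invariant $\epsilon_{\gamma,\delta}$. The bookkeeping step is to check that the difference between the stratum of constant postulation and the stratum of constant cohomology is accounted for exactly by the module-theoretic terms: the passage from $\delta^1(0)$-type corrections to the full $\sum_{i=0}^2\eext 0Ri(M,M)$ reflects fixing the module class rather than just the postulation. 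I expect the $E_2$ spectral sequence~\eqref{mspect} and the duality~\eqref{mduality}, applied to $N_1=N_2=M$ (which is of finite length for a curve, so its local cohomology is concentrated in a single degree), to do the work of relating $\eext 0Ri(M,M)$ to the $\delta^j$'s and to the self-duality that makes $\eext 0R0(M,M)=\eext 0R2(M,M)$ symmetric in the relevant range.

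For the second assertion, the dimension formula at a generic point of a generically smooth component, I would argue as in Theorem~\ref{thmeulerIC}: at a generic $C$ of a generically smooth component $V$, the inclusion of strata becomes, locally, well understood, so the lower bound is attained and $\dim V$ equals the stratum dimension plus the dimension of the fiber of $\phi$, i.e.\ plus the number of moduli of $C$ with fixed $(\gamma,\rho)$. The genericity forces the obstruction term $\eext 0R2(M,M)$ to contribute via the smoothness of $\phi$ (which is the content of the Martin-Deschamps--Perrin result alluded to in the introduction), collapsing $\sum_{i=0}^2$ down to $\sum_{i=0}^1$ in the exact formula. I would then reconcile $4d+\delta^2(0)-\delta^1(0)+\delta^1(-4)-\sum_{i=0}^1\eext 0Ri(M,M)$ with $1+\delta^2(-4)-\sum_{i=0}^1\eext 0Ri(M,M)$ by a direct appeal to the chain of equalities in Lemma~\ref{eulerIC}.

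The main obstacle, I expect, will be the precise identification of the tangent and obstruction spaces of $\HH(d,g)_{\gamma,\rho}$ and the verification that the quotient by the module deformations is exactly $\sum_{i}\eext 0Ri(M,M)$ rather than some subquotient. Fixing the cohomology of $\sI_C$ is a delicate \emph{locally closed} condition, and one must be careful that the scheme structure on $\HH(d,g)_{\gamma,\rho}$ is the one for which the morphism $\phi$ to $E_{\rho}$ is formally smooth; only then does the subtraction of the full module term $\sum_{i=0}^2\eext 0Ri(M,M)$ produce a genuine tangent--obstruction estimate for the stratum. Controlling the interplay between $\Ext$ over $R$ and $\Ext$ over $\sO_{\proj{}}$ through the exact sequences~\eqref{seq} and~\eqref{mseq}, so that the Rao-module contribution separates cleanly from the ambient $H^0(\sN_C)$, is the technical heart of the argument.
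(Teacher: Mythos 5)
Your outline does coincide with the skeleton of the paper's proof --- Martin-Deschamps--Perrin's smooth ``morphism'' $\phi:\HH(d,g)_{\gamma,\rho}\to E_\rho$, the inclusion $\HH(d,g)_{\gamma,\rho}\subseteq \HH(d,g)$ for the lower bound, and the local identification of the stratum with $\HH(d,g)$ at a generic point of a generically smooth component --- but the quantitative mechanism that actually produces the $\ext$-terms is missing from your plan, and the step you call the ``key computation'' would fail. In the paper the numbers come from exactly two inputs that you never invoke: first, the fiber $\HH(d,g)_{\gamma,M}:=\phi^{-1}(M)$ has dimension $1+\delta^{2}(-4)-{_0\!\hom}(M,M)$ (this is \cite{MDP1}, Thm.\ 3.8, rewritten via Lemma~\ref{eulerIC} and Remark~\ref{remeulerIC}); second, Laudal's theorem (\cite{L1}, Thm.\ 4.2.4) bounds the hull $E_{\rho,M}$ of $Def_M$ by $\eext 0R1 (M,M)-\eext 0R2 (M,M)\le \dim E_{\rho,M}\le \eext 0R1 (M,M)$, with equality on the right if and only if $Def_M$ is formally smooth. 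Formal smoothness of the local functor $\phi_C$ (which, incidentally, is how the paper resolves your worry about scheme structures: it never needs $\phi$ as a morphism of schemes) makes the stratum dimension equal to the fiber dimension plus $\dim E_{\rho,M}$, and this is what yields \eqref{equ}, the inequality of the theorem, and --- once the stratum is smooth --- the exact formula. Your substitute, ``subtracting the module's deformation-and-obstruction contribution from the ambient count,'' is not a derivation; in particular it cannot explain why $\eext 0R2 (M,M)$ appears in the lower bound yet drops out of the generic formula, which in the paper is precisely the gap between the two sides of Laudal's inequality. Likewise, the computation you anticipate --- expressing the $\eext 0Ri (M,M)$ through the $\delta^j$'s by \eqref{mspect} and \eqref{mduality}, using a symmetry $\eext 0R0 (M,M)=\eext 0R2 (M,M)$ --- is neither needed nor true: for the finite-length module $M$, duality gives $\EExt 0Ri (M,M)\cong \EExt {-4}R{4-i} (M,M)^{\vee}$, an identification across \emph{different} internal degrees, and nowhere in the paper are the $\ext^i$ of $M$ related to the $\delta^j$'s.

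Two of your identifications would derail the argument if carried out literally. The ``subspace of $H^0(\sN_C)$ that induces the trivial deformation of $M$'' is the tangent space of the \emph{fiber} $\HH(d,g)_{\gamma,M}$, not of the stratum $\HH(d,g)_{\gamma,\rho}$: in the stratum only the function $\rho$ is fixed while the module itself is allowed to deform, and that freedom is exactly what contributes the $\eext 0R1 (M,M)$-term; computing with the fiber's tangent space would lose it entirely. And at the generic point, your claim that $\dim V$ ``equals the stratum dimension plus the dimension of the fiber of $\phi$'' double-counts, since the fiber sits inside the stratum. The paper's argument is instead: genericity forces a small neighbourhood of $(C)$ in $\HH(d,g)_{\gamma,\rho}$ to be open in $\HH(d,g)$ (otherwise $C$ would not be generic), hence $\dim V=\dim_{(C)}\HH(d,g)_{\gamma,\rho}$; then smoothness of $\HH(d,g)$ at $(C)$ and generic flatness give a local isomorphism, so the stratum is smooth at $(C)$ and the right-hand inequality of \eqref{equ} becomes an equality.
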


\begin{proof} We consider the stratification $ \HH(d,g)_{\gamma,\rho}$ of the
  Hilbert scheme $\HH(d,g)$ and the ``morphism'' $\phi :
  \HH(d,g)_{\gamma,\rho} \rightarrow E_{\rho}$: = isomorphism classes of
  $R$-modules $M$ given by mapping $(C)$ onto $ M(C)$. By \cite{MDP1}, Thm.\!
  1.5, $\phi$ is smooth, and $\HH(d,g)_{\gamma,M} := \phi^{-1} (M)$ is a
  scheme of dimension $1+\delta^{2}(-4)- {_0\!\hom}(M,M)$ (see
  Remark~\ref{remeulerIC}). If we ignore the scheme structures, we may still,
  for each curve $C$, consider the corresponding local deformation functor,
  $\phi_C$, of $\phi$ at $(C)$, defined on the category of local artinian
  k-algebras with residue field $k$. $\phi_C$ is smooth of fiber dimension as
  above by the results of \cite{MDP1}, see also \cite{krao}, Rem.\! 2.12 for
  the curve case and Theorem~\ref{t1.1} of this paper for the corresponding
  result for surfaces.

  It is well known that $  \EExt 0Ri (M,M)$ for $i=1,2$, determine the local
  graded deformation functor, $Def_M$, of the $R$-module $M:=M(C)$, e.g.
$$ {_0\!\ext}^1(M,M) - {_0\!\ext}^2(M,M) \le \dim  E_{\rho,M} \le \ 
{_0\!\ext}^1(M,M),$$ where $ E_{\rho,M}$ is the hull of $Def_M$ (\cite{L1},
Thm.\! 4.2.4). Moreover we have equality to the right if and only if $Def_M$
is formally smooth. Combining with the smoothness of $\phi_C$ and its fiber
dimension we get
\begin{equation} \label{equ}
  1+\delta^{2}(-4)-  \sum_{i=0}^2  {_0\!\ext}^i(M,M) \le
  \dim_{(C)} \HH(d,g)_{\gamma,\rho} \le 1+\delta^{2}(-4)-
  {_0\!\hom}(M,M)+{_0\!\ext}^1(M,M) 
\end{equation}
with equality to the right if and only if $ \HH(d,g)_{\gamma,\rho}$ is smooth
at $(C)$. This proves the inequality of the theorem since $ \dim_{(C)}\HH(d,g)
\ge \dim_{(C)} \HH(d,g)_{\gamma,\rho}$. We also get the final statement
because, at a generic curve $C$ with postulation $\gamma$ and deficiency
$\rho$, $ \HH(d,g)_{\gamma,\rho} \cong \HH(d,g)$ around $(C)$! Indeed if we
have $ \dim_{(C)} \HH(d,g)_{\gamma,\rho} < \dim_{(C)} \HH(d,g)$, then a small
neighborhood of $(C)$ in $ \HH(d,g)_{\gamma,\rho}$ is not {\it open} in $
\HH(d,g)$, contradicting the assumption that $C$ is generic in $ \HH(d,g)$.
Hence we have equality in dimensions and in fact a local isomorphism (e.g. by
generic flatness) since $ \HH(d,g)$ is smooth at $(C)$. It follows that $
\HH(d,g)_{\gamma,\rho}$ is smooth at $(C)$ and the inequality of \eqref{equ}
to the right is an equality.
\end{proof}

\begin{remark} Let $ T_{\gamma,\rho} $ be the tangent space of $
  \HH(d,g)_{\gamma,\rho}$ at $(C)$. Then we easily see from the proof that
  the upper bound in \eqref{equ} is equal to $ \dim T_{\gamma,\rho}$.
\end{remark}

If we want to generalize Theorem~\ref{thmeulerICM} to codimension 2 subschemes
in $\proj{n+2}$, the explicit replacements of $ \sum_{i=0}^1
{_0\!\ext}^i(M,M)$ in the generalized statements seem to be very complicated.
However observing that $ \sum_{i=0}^1 {_0\!\ext}^i(M,M)$ is a biliaison
invariant (since $M$ is, up to a twist), it seems to be the following weaker
form of Theorem~\ref{thmeulerICM} and \eqref{equ} which is natural to
generalize:

\begin{remark} \label{sumex} If we define $ {\rm sumext}(C)$ and ${\rm
    obsumext}(C)$ by \ ${\rm sumext}(C) = 1+\delta^{2}(-4) - \dim
  T_{\gamma,\rho}$ \ and \ ${\rm obsumext}(C) = 1+\delta^{2}(-4) - \dim_{(C)}
  \HH(d,g)_{\gamma,\rho}$\! , \! then $ {\rm sumext}(C)$ and ${\rm
    obsumext}(C)$ are biliaison invariants. We have $ {\rm sumext}(C) \le {\rm
    obsumext}(C)$
  and the equality holds if and only if $ \HH(d,g)_{\gamma,\rho}$ is smooth at
  $(C)$. 
  Furthermore if $C$ is unobstructed and generic in $ \HH(d,g)$, then $$
  \dim_{(C)}\HH(d,g) \ = 1+\delta^{2}(-4) - {\rm sumext}(C) \ . $$
\end{remark}

We have not yet proved that ${\rm obsumext}(C)$ is a biliaison invariant, but
it will follow from later results, or from \cite{MDP1}, Thm.\! 1.5 and
Remark~\ref{remeulerIC}.

For curves we have \ \ 
\begin{equation} \label{summa} {\rm sumext}(C) = \sum_{i=0}^1 \eext 0Ri (M,M)
  \ , \ \ \ {\rm and}
\end{equation}
\begin{equation} \label{obsum}
\sum_{i=0}^1  \eext 0Ri (M,M) \le {\rm obsumext}(C) \le \sum_{i=0}^2
 \eext 0Ri (M,M)
\end{equation} which we may use to compute $ {\rm sumext}(C)$ and estimate
$ {\rm obsumext}(C) $. We may also compute these invariants somewhere in the
even liaison class, e.g. by letting $C$ be the minimal curve and computing $
\dim_{(C)} \HH(d,g)_{\gamma,\rho}\! \ , \dim T_{\gamma,\rho}$ and $
\delta^{2}(-4)$ 
in this case. If $D$ is in the even liaison class 
of $C$, $D \in  \HH_{\gamma',\rho'}$,  and if we can compute  $  
\delta_D^{2}(-4)$, then we get the dimensions of $ \HH_{\gamma',\rho'}$ and
$T_{\gamma',\rho'}$, from the biliaison invariants.

\section{The dimension and smoothness of $\HH(d,p,\pi)$.}
In this section we consider the Hilbert scheme, $\HH(d,p,\pi)$, of surfaces in
$\proj{4}$. Our goal is to see how far we can generalize the results of the
preceding section to surfaces. We will focus on the dimension and the
smoothness of the Hilbert scheme. 

To compute the dimension of the components of $\HH(d,p,\pi)$, we consider the
minimal resolution of $I=I_X$:
\begin{equation} \label{res3} 0 \to \bigoplus_{i=1}^{r_4} R(-n_{4,i}) \to
  \bigoplus_{i=1}^{r_3} R(-n_{3,i}) \to \bigoplus_{i=1}^{r_2} R(-n_{2,i}) \to
  \bigoplus_{i=1}^{r_1} R(-n_{1,i}) \to I \to 0,
\end{equation}
and the invariant $ \delta^j (v)= \delta_X^j (v)$ defined by
\begin{equation}
  \delta_X^j (v) = \sum_i h^j (\sI_X (n_{1,i}+v)) - \sum_i h^j (\sI_X
  (n_{2,i}+v))+ \sum_i h^j (\sI_X (n_{3,i}+v)) - \sum_i h^j (\sI_X
  (n_{4,i}+v)). 
\end{equation}

\begin{proposition} \label{dp0.3} Let $X$ be any surface in $\proj{4}$ of
  degree $d$ and sectional genus $\pi$. Then the following expressions are
  equal
\begin{equation} \label{11}
  \begin{gathered}
    \eext 0R1 (I,I) - \eext 0R2 (I,I) + \eext 0R3 (I,I) = 1 - \delta^0(0) =
    \chi (\sN_X) - \delta^0(-5) = \\
    \chi (\sN_X) - \delta^3(0) + \delta^2(0) - \delta^1(0) = 1 + \delta^3(-5)
    - \delta^2(-5) + \delta^1(-5).
  \end{gathered}
\end{equation}
Moreover
\begin{equation} \label{hilbpolyN}
  \chi (\sN_X(v)) = dv^2 + 5dv + 5(2d + \pi  - 1) - d^2 + 2\chi (\sO_X).
\end{equation}
\end{proposition}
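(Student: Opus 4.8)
The plan is to treat the two displayed formulas \eqref{11} and \eqref{hilbpolyN} separately, deriving the chain of equalities in \eqref{11} from three sources: a resolution Euler characteristic, a Riemann--Roch computation, and the graded/Serre duality \eqref{mduality}. First I would establish the leftmost equality $\eext 0R1(I,I)-\eext 0R2(I,I)+\eext 0R3(I,I)=1-\delta^0(0)$ by applying $\Hom_R(-,I)$ to the minimal resolution \eqref{res3} and computing the Euler characteristic of the resulting complex in degree $0$. Since $\Hom_R(\bigoplus_i R(-n_{j,i}),I)_0=\bigoplus_i I_{n_{j,i}}=\bigoplus_i H^0(\sI_X(n_{j,i}))$, the alternating sum of the terms is exactly $\delta^0(0)$, while the alternating sum of the cohomology is $\sum_{i\ge 0}(-1)^i\eext 0Ri(I,I)$. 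Here I use $\Hom_R(I,I)=R$, so $\eext 0R0(I,I)=\dim R_0=1$, and $\operatorname{pd}_R I=3$, so the sum stops at $i=3$. The same computation in degree $v$ gives the identity $\delta^0(v)=\sum_{i\ge 0}(-1)^i\eext vRi(I,I)$, which I reuse below.

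For \eqref{hilbpolyN} I would use Riemann--Roch on $X$. The sheaf $\sN_X$ has rank $2$ and $\det\sN_X=\omega_X(5)$ by adjunction on $\proj{}$, so twisting by $\sO_X(v)$ gives $\chi(\sN_X(v))=dv^2+(H\cdot c_1(\sN_X)-H\cdot K_X)v+\chi(\sN_X)$, whose linear coefficient collapses to $5d$ because $H\cdot c_1(\sN_X)=H\cdot K_X+5d$. It then remains to evaluate $\chi(\sN_X)$, which I would extract from the normal-bundle sequence $0\to T_X\to T_{\proj{}}|_X\to\sN_X\to 0$ together with the double-point formula for a surface in $\proj{}$, expressing the missing Chern data in terms of $d$, the sectional genus $\pi$ (via $H\cdot K_X=2\pi-2-d$) and $\chi(\sO_X)$, so as to match the constant in \eqref{hilbpolyX}. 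For a merely lCM $X$ I would recast this as an Euler-characteristic identity read off from $\sN_X\cong\mathcal{E}xt^1_{\sO_{\proj{}}}(\sO_X,\sO_X)$ and the sheafified resolution, so no smoothness of $X$ is needed.

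The heart of the argument is the middle block relating $\chi(\sN_X)$ to the shifted invariants. I would feed $N_1=N_2=I$ and $v=0$ into the long exact sequence \eqref{mseq} (equivalently \eqref{seq}); its alternating sum vanishes and couples together the three quantities $\eext 0R1(I,I)-\eext 0R2(I,I)+\eext 0R3(I,I)=1-\delta^0(0)$, the normal-sheaf Euler characteristic $\chi(\sN_X)=h^0(\sN_X)-h^1(\sN_X)+h^2(\sN_X)$, and the local-cohomology block $\sum_i(-1)^i\eext 0{\mfm}i(I,I)$. The middle group assembles into $\chi(\sN_X)$ through $\Ext^i_{\sO_{\proj{}}}(\tilde I,\tilde I)\cong H^{i-1}(\sN_X)$ for $i=1,2$ and \eqref{mduality} for the top term, while \eqref{mduality} simultaneously rewrites the local-cohomology block in terms of $\eext {-5}Rj(I,I)$, i.e.\ as $\pm\delta^0(-5)$ by the identity of the first paragraph; this produces the equality involving $\chi(\sN_X)-\delta^0(-5)$. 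The last two equalities are then pure (Serre/graded) duality statements among the $\delta^j$ themselves, $\delta^0(-5)=\delta^1(0)-\delta^2(0)+\delta^3(0)$ and its $-5$-shifted analogue, which I would prove by applying local duality (with $\omega_R=R(-5)$) to each group $H^j(\sI_X(m))$ occurring in the $\delta^j$ and matching the outcome against the self-dual structure of \eqref{res3}.

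The step I expect to be the main obstacle is the sign and index bookkeeping in this last block. Because $n+3=5$ is odd, the duality \eqref{mduality} introduces an overall sign flip that is absent in the curve case, where $n+3=4$; consequently the precise identification of the boundary contributions $\Ext^3_{\sO_{\proj{}}}(\tilde I,\tilde I)$ and $\eext 0{\mfm}i(I,I)$ — in particular whether they contribute with a plus or a minus in front of $\delta^0(-5)$ — must be carried out carefully through the spectral sequence \eqref{mspect}. Getting this parity right is exactly what fixes the signs throughout the chain \eqref{11} and reconciles the Riemann--Roch value of $\chi(\sN_X)$ with the resolution-theoretic expression $1-\delta^0(0)$.
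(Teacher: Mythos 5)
Your strategy for the chain \eqref{11} is essentially the paper's: the leftmost equality by taking Euler characteristics of $\HHom 0R (-,I)$ applied to \eqref{res3}, the second by combining the alternating sum of \eqref{mseq} with the duality \eqref{mduality}, and the last two by duality identities among the $\delta^j$. The proposal fails, however, at two concrete points.

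First, \eqref{hilbpolyN} is asserted for \emph{any} surface, i.e.\ for an equidimensional locally Cohen--Macaulay $X$, and your main argument for it (Riemann--Roch for the rank $2$ sheaf $\sN_X$, adjunction $\det\sN_X=\omega_X(5)$, the sequence $0\to T_X\to T_{\proj{}}|_X\to\sN_X\to 0$ and the double point formula) only exists when $X$ is smooth; this is precisely the distinction drawn in the Remark following the proposition, where your argument is recorded as the \emph{smooth-case} alternative that the paper deliberately avoids. Your one-sentence repair for the lCM case (``recast as an Euler-characteristic identity read off from $\sN_X\cong{\sE}xt^1_{\sO_{\proj{}}}(\sO_X,\sO_X)$ and the sheafified resolution'') names none of the actual content. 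What the paper proves is \eqref{hilbpolyNX}, $\chi(\sN_X(v))=\chi(\sO_X(v))+\chi(\sO_X(-v-5))-d^2$, and this needs: (a) the numerical relations \eqref{nijX} extracted from the Hilbert polynomial of $X$; (b) the identity $\dim R_v-\chi(\sN_X(v))=\delta^0(v)$ for $v\gg 0$, via $\EExt vR{i}(I,I)\cong H^{i-1}(\sN_X(v))$ for $v\gg 0$; and (c) the symmetry $\chi(\sO_{\proj{}}(m))=\chi(\sO_{\proj{}}(-m-5))$, which together with (a) converts $\sum_{j}(-1)^{j-1}\sum_i\chi(\sI_X(n_{j,i}+v))$ into $\chi(\sI_X(-v-5))-\chi(\sO_X(v))+d^2$. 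Steps (a) and (c) are where the formula beyond the smooth case actually comes from, and they are missing from your plan.

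Second, the sign bookkeeping you postpone is not merely the ``main obstacle''; the identities you propose to prove are false as written. Your instinct that the parity of $n+3=5$ flips a sign relative to the curve case is correct, but carrying out the alternating sum of \eqref{mseq} at $v=0$ with \eqref{mduality} (which sends $\eext 0{\mfm}{i}(I,I)$ to $\eext {-5}R{5-i}(I,I)$ and hence reverses parity) gives
\begin{equation*}
1-\delta^0(0)=\chi(\sN_X)+\delta^0(-5),\qquad \delta^0(-5)=-\delta^1(0)+\delta^2(0)-\delta^3(0),
\end{equation*}
in agreement with the sign $(-1)^n$ appearing in \eqref{1delta}, and \emph{not} $\delta^0(-5)=\delta^1(0)-\delta^2(0)+\delta^3(0)$ as you assert. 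A concrete test is the surface $X'$ of Example~\ref{e5.3}: there $h^0(\sI_{X'}(5))=h^1(\sI_{X'}(5))=1$ gives $\delta^0(-5)=-1$ and $\delta^1(0)=1$, while $\delta^2(0)=\delta^3(0)=0$, $\chi(\sN_{X'})=66$ and $1-\delta^0(0)=65$; so the plus sign is the correct one (the printed minus in the proposition's second expression is itself inconsistent with \eqref{1delta} and with this example, so reproducing it literally cannot be the goal). Note also that these relations are not term-by-term ``pure duality statements'': local duality turns $h^j(\sI_X(m))$, $j\ge 1$, into $\dim\bigl(\Ext_R^{4-j}(I,R)\bigr)_{-m-5}$, after which one still has to take Euler characteristics of $\Hom_R(-,R)$ applied to \eqref{res3} and symmetrize over pairs of twists of the resolution — a computation of the same kind as your first paragraph, not a matching against any ``self-dual structure'' of \eqref{res3}.
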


\begin{proof} The first upper equality follows easily by applying $\HHom vR
  (-,I)$ (for $v=0$) to the resolution \eqref{res3} because $\Hom_R (I,I)
  \simeq R$ and because the alternating sum of the dimension of the terms in a
  complex equals the alternating sum of the dimension of its homology groups.
  Similarly we compute $\delta^0 (-5)$ which through the duality
  \eqref{mduality} leads to the alternating sum of $\eext 0{\mfm}i (I,I)$.
  Combining with \eqref{mseq}, recalling $ {\sH}om_{\sO_\proj{}} (\sI_X,\sI_X)
  \cong \sO_\proj{} $ and $ {\sE}xt_{\sO_\proj{}}^1 (\sI_X,\sI_X) \cong
  \sN_X$, we get the next equality in the first line. The other equalities
  involving $\delta^j(v)$ follow from \eqref{mspect}, \eqref{mduality} and
  \eqref{seq} as outlined in \cite{krao}, Lem\! 2.2 in the curve case. The
  surface case is technically more complicated because the spectral sequence
  of the proof, $E_2^{p,q} = \EExt vRp (I, H_\mfm^q (I))$, contains one more
  non-vanishing term. The principal parts of the proof are, however, the same,
  and we leave this part to the reader. Similarly the arguments of
  \cite{krao}, Rem\! 2.4, lead to the formula
  \begin{equation} \label{hilbpolyNX} \chi (\sN_X(v)) = \chi (\sO_X(v)) + \chi
    (\sO_X(-v-5)) - d^2
\end{equation}
for any surface $X$, from which \eqref{hilbpolyN} of
Proposition~\ref{dp0.3} easily follows provided we combine with
\eqref{hilbpolyX}. Since we do not have a reference of \eqref{hilbpolyN} in
the generality of an arbitrary surface (i.e. locally Cohen-Macaulay and
equidimensional, see Remark below) and since the arguments of \cite{krao},
Rem\! 2.4 was only sketched, we will include a proof of \eqref{hilbpolyNX}.

Firstly,  we compute $\chi(\sO_X(v))=\chi(\sO_{\proj{}}(v))-\chi(\sI_X(v))$,
$ \chi(\sO_{\proj{}}(v))={ v+4 \choose 4}$, directly from \eqref{res3} as a
large sum of binomials. Recalling that $\chi(\sO_X(v))$ is the polynomial
\eqref{hilbpolyX} of degree 2, we get
\begin{equation} \label{nijX} \sum_{j=1}^4(-1)^{j-1}r_j=1 \ , \ \
  \sum_{j=1}^4(-1)^{j-1} \sum_i n_{j,i}=0 \ \ \ {\rm and} \ \ \
  \sum_{j=1}^4(-1)^{j-1} \sum_i n_{j,i}^2=-2d \ .
 \end{equation} 
 Now as in the very first part of the proof, we apply $ \HHom vR (-,I)$ to
 \eqref{res3}. Since we get $_v\!\Ext_R^i(I , I) \cong \ H^{i-1}(\sN_X(v))$
 for $v >>0$ directly from \eqref{mspect}, \eqref{mduality} and \eqref{seq}
 and we have $\Hom_R (I,I) \simeq R$, we find
 \begin{equation} \label{lem23} {\dim R_v} ~ - ~ \chi (\sN _{X} (v)) ~ = ~
   \delta ^{0} (v) ~ = ~ \sum_{j=1}^4(-1)^{j-1} \sum_i \chi (\sI_X (n_{j,i}+v))
   ~ , ~ ~ ~ v ~ >> ~ 0 \ .
\end{equation}
By \eqref{res3}, $ \chi(\sI_{X}(-v-5))= \sum_{j=1}^4(-1)^{j-1} \sum_i \chi
(\sO_{\proj{}} (-n_{j,i}-v-5))= \sum_{j=1}^4(-1)^{j-1} \sum_i \chi
(\sO_{\proj{}} (n_{j,i}+v))$. The right hand side of \eqref{lem23} is therefore
equal to $ \chi(\sI_{X}(-v-5))- \sum_{j=1}^4(-1)^{j-1} \sum_i \chi (\sO_X
(n_{j,i}+v))$. Then we compute $\sum_{j=1}^4(-1)^{j-1} \sum_i \chi (\sO_X
(n_{j,i}+v))$ by just using \eqref{hilbpolyX} and \eqref{nijX}. We get exactly
$$\sum_{j=1}^4(-1)^{j-1} \sum_i \chi (\sO_X (n_{j,i}+v)) = \chi(\sO_X(v))-d^2 ,
$$ and \eqref{lem23} translates to $ {\dim R_v} - \chi (\sN _{X} (v))  =
\chi(\sI_{X}(-v-5))- \chi(\sO_X(v))+d^2$ and we get \eqref{hilbpolyNX}.
\end{proof}
\begin{remark} Note that the formula \eqref{hilbpolyN} of
  Proposition~\ref{dp0.3} is certainly straightforward to prove for smooth
  surfaces by combining the well known formula
\begin{equation*}
  \chi (\sN_X(v)) = dv^2 + 5dv + 5(d - \pi  + 1) - 2K^2 + 14\chi (\sO_X)
\end{equation*}
with the double point formula $d^2 - 10d - 5H.K - 2K^2 + 12\chi (\sO_X) = 0$.
\end{remark}

Now we come to the analogue of Theorem~\ref{thmeulerIC}. Also in this case $
\eext 0R1 (I,I) - \eext 0R2 (I,I)$ is a lower bound of
$\HH(d,p,\pi)_{\gamma}$. Since the basic part of the proof of the Theorem
below is similar to the proof of Theorem~\ref{thmeulerIC}, we will only sketch
the proof. Note that in the surface case, we do not succeed so nicely as in
the curve case because the lower bound above is not directly given by the
first equality of Proposition~\ref{dp0.3}, due to the term $ \eext 0R3 (I,I)$.
Since we have $\EExt 0R3 (I,I) \cong {_{-5}\!\Ext_{\mathfrak m}^2}(I,I)^{\vee}
\cong {_{-5}\!\Hom_R}(I, M_1)^{\vee}$ by \eqref{mspect} and \eqref{mduality}
and $M_1 \cong H_{\mathfrak m}^2(I)$ we get at least
\begin{proposition}\label{thmeulerIX}
  Let $X$ be a surface in $\proj{4}$, let $M_i = H_*^i(\sI_X)$ for i = 1,2 and
  put $I=I_X$ and $ \delta^{j}(v)= \delta_X^{j}(v)$ for any $j$ and $v$. Then
  the dimension of $\HH(d,p,\pi)$ at $(X)$ satisfies
\begin{equation*}
  \dim_{(X)}\HH(d,p,\pi) \ge  1 + \delta^3(-5)
  - \delta^2(-5) + \delta^1(-5)- \sum_i h^1(\sI_X (n_{1,i}-5)).
\end{equation*}
Moreover let $X$ be a generic surface of a generically smooth component $V$ of
$ \HH(d,p,\pi)$ and suppose $ {_{-5}\!\Hom_R}(I , M_2)=0$. Then
$$\dim V =  1 + \delta^3(-5)
- \delta^2(-5) + \delta^1(-5)- \sum_{i=0}^1 \eext {-5}Ri(I, M_1).
$$ 
\end{proposition}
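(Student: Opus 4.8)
The proof should parallel Theorem~\ref{thmeulerIC} and its proof (which passed through the stratum $\HH(d,g)_{\gamma}$), but now using the finer stratification $\HH(d,p,\pi)_{\gamma}$ together with the deformation-theoretic interpretation of the $\ext$-groups. The starting point is the lower bound: as stated just before the proposition, $\eext 0R1(I,I) - \eext 0R2(I,I)$ is a lower bound for $\dim_{(X)}\HH(d,p,\pi)_{\gamma}$, hence for $\dim_{(X)}\HH(d,p,\pi)$. First I would rewrite this difference using Proposition~\ref{dp0.3}: from the first displayed equality there, $\eext 0R1(I,I) - \eext 0R2(I,I) + \eext 0R3(I,I) = 1 + \delta^3(-5) - \delta^2(-5) + \delta^1(-5)$, so moving the $\eext 0R3(I,I)$ term to the right gives
\begin{equation*}
  \eext 0R1(I,I) - \eext 0R2(I,I) = 1 + \delta^3(-5) - \delta^2(-5) + \delta^1(-5) - \eext 0R3(I,I).
\end{equation*}
Next I would use the identifications supplied in the text, namely $\EExt 0R3(I,I) \cong {_{-5}\!\Ext_\mfm^2}(I,I)^\vee \cong {_{-5}\!\Hom_R}(I,M_1)^\vee$ coming from \eqref{mspect} and \eqref{mduality} together with $M_1 \cong H_\mfm^2(I)$, so that $\eext 0R3(I,I) = {_{-5}\!\hom_R}(I,M_1)$. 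The map computing this $\Hom$ is (by the standard argument applying $\Hom_R(-,M_1)$ to the minimal resolution \eqref{res3}) the kernel of the induced map $\bigoplus_i H^1(\sI_X(n_{1,i}-5)) \to \bigoplus_i H^1(\sI_X(n_{2,i}-5))$, and its dimension is therefore at most $\sum_i h^1(\sI_X(n_{1,i}-5))$. Substituting these bounds yields the stated inequality.

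For the second (equality) statement I would follow the strategy of the proof of Theorem~\ref{thmeulerIC}, specializing to a generic $X$ on a generically smooth component $V$. The key comparison is between $\HH(d,p,\pi)_{\gamma}$ and $\HH(d,p,\pi)$ at the generic point $(X)$: exactly as in the curve argument, if $\dim_{(X)}\HH(d,p,\pi)_{\gamma} < \dim_{(X)}\HH(d,p,\pi)$, then a small neighborhood of $(X)$ in the $\gamma$-stratum would fail to be open in the full Hilbert scheme, contradicting genericity of $X$; hence the two dimensions agree and $\HH(d,p,\pi)_{\gamma} \cong \HH(d,p,\pi)$ locally at $(X)$ (using generic flatness and smoothness of $V$ at $(X)$). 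Under this identification the lower bound becomes an exact computation of $\dim V$, provided the lower bound itself is sharp, i.e. provided the $\gamma$-stratum is smooth at $(X)$ with tangent–obstruction gap exactly $\eext 0R1(I,I) - \eext 0R2(I,I)$. Granting this, $\dim V = 1 + \delta^3(-5) - \delta^2(-5) + \delta^1(-5) - {_{-5}\!\hom_R}(I,M_1)$.

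It remains to convert ${_{-5}\!\hom_R}(I,M_1)$ into the sum $\sum_{i=0}^1 \eext {-5}Ri(I,M_1)$, and this is where the hypothesis ${_{-5}\!\Hom_R}(I,M_2)=0$ enters. I expect this to be the main obstacle: one must show that, under this vanishing, the contribution of the single $\Hom$-term coincides with the alternating-style sum $\eext {-5}R0(I,M_1) + \eext {-5}R1(I,M_1)$. The natural mechanism is the duality \eqref{mduality} and the long exact sequence \eqref{seq} relating $\Ext_R$, $\Ext_\mfm$, and the sheaf $\Ext$ groups; the vanishing ${_{-5}\!\Hom_R}(I,M_2)=0$ should kill precisely the terms that would otherwise obstruct the identity $\delta^1(-5) - {_{-5}\!\hom_R}(I,M_1) = \delta^1(-5) - \sum_{i=0}^1 \eext {-5}Ri(I,M_1) + (\text{something that vanishes})$, or equivalently force the relevant connecting maps to be isomorphisms. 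Concretely I would trace how $M_2 \cong H_\mfm^3(I)$ sits in the spectral sequence \eqref{mspect} applied to the pair $(I, \text{resolution terms})$ and check that ${_{-5}\!\Hom_R}(I,M_2)=0$ makes the higher $\Ext$ contributions collapse, leaving exactly $\eext {-5}R0(I,M_1) + \eext {-5}R1(I,M_1)$ in place of the single $\hom$-term. Once that bookkeeping is carried out, the final displayed equality follows by substitution.
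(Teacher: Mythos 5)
Your first half (the inequality) follows the paper's own argument exactly: the lower bound $\eext 0R1(I,I)-\eext 0R2(I,I)$ for $\dim_{(X)}\HH(d,p,\pi)_{\gamma}$, rewritten via Proposition~\ref{dp0.3}, combined with $\eext 0R3(I,I)={_{-5}\!\hom_R}(I,M_1)\le \sum_i h^1(\sI_X(n_{1,i}-5))$. That part is fine.

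The second half has a genuine gap, in two places. First, your identification of $\dim V$ is incorrect. At a generic point of a generically smooth component, the local isomorphism $\HH(d,p,\pi)_{\gamma}\cong \HH(d,p,\pi)$ makes $\HH(d,p,\pi)_{\gamma}$ \emph{smooth} at $(X)$, so $\dim V$ equals the tangent space dimension $\eext 0R1(I,I)$, not the lower bound $\eext 0R1(I,I)-\eext 0R2(I,I)$; your clause ``smooth at $(X)$ with tangent--obstruction gap exactly $\eext 0R1(I,I)-\eext 0R2(I,I)$'' is self-contradictory unless $\eext 0R2(I,I)=0$, which nothing guarantees. Consequently the term $\eext 0R2(I,I)$ survives in
$$\dim V=\eext 0R1(I,I)=1+\delta^3(-5)-\delta^2(-5)+\delta^1(-5)+\eext 0R2(I,I)-\eext 0R3(I,I),$$
and computing it is precisely where the hypothesis ${_{-5}\!\Hom_R}(I,M_2)=0$ enters in the paper: duality \eqref{mduality} gives $\eext 0R2(I,I)=\eext{-5}{\mfm}3(I,I)$, and the exact sequence \eqref{lemm25} attached to the spectral sequence \eqref{mspect},
$$0\to \EExt{-5}R1(I,M_1)\to {_{-5}\!\Ext_{\mfm}^3}(I,I)\to {_{-5}\!\Hom_R}(I,M_2)\to\cdots,$$
collapses under that hypothesis to $\eext 0R2(I,I)=\eext{-5}R1(I,M_1)$.

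Second, the conversion you defer to the end --- turning ${_{-5}\!\hom_R}(I,M_1)$ into $\eext{-5}R0(I,M_1)+\eext{-5}R1(I,M_1)$ --- cannot be carried out: the two quantities differ by $\eext{-5}R1(I,M_1)$, and no vanishing of ${_{-5}\!\Hom_R}(I,M_2)$ forces $\eext{-5}R1(I,M_1)=0$. This is not bookkeeping; the step is false as stated. What the paper's proof actually establishes is
$$\dim V=1+\delta^3(-5)-\delta^2(-5)+\delta^1(-5)+\eext{-5}R1(I,M_1)-{_{-5}\!\hom_R}(I,M_1),$$
with the $\Ext^1$-correction entering with a \emph{plus} sign, exactly as in the curve case (Theorem~\ref{thmeulerIC}), where the correction ${_{-4}\!\hom_R}(I_C,M)=\eext 0R2(I_C,I_C)$ is likewise added. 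So the expression $\sum_{i=0}^1\eext{-5}Ri(I,M_1)$ in the Proposition has to be read as an alternating sum; your attempt to reach the literal (non-alternating) reading is what drives you into that impossible final step.
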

\begin{proof} For the inequality, we remark that $\eext 0R3 (I,I) =
  {_{-5}\!\hom_R}(I, M_1) \le \sum_i h^1(\sI_X (n_{1,i}-5))$ because $
  {_{-5}\!\Hom_R}(I, M_1)$ is the kernel of the map $ \oplus_i
  H^1(\sI_X(n_{1,i}-5)) \rightarrow \oplus_i H^1(\sI_X(n_{2,i}-5))$ induced by
  the corresponding map in \eqref{res3}. We conclude by
  Proposition~\ref{dp0.3}.

  To find $\dim V $ we proceed as in the proof of Theorem~\ref{thmeulerIC}
  (see the last part of the proof of Theorem~\ref{thmeulerICM} for a close
  idea), and we get $\dim V =\eext 0R1 (I,I)$, i.e. $$\dim V = 1 +
  \delta^3(-5) - \delta^2(-5) + \delta^1(-5) + \eext 0R2 (I,I) - \eext 0R3
  (I,I).$$ By \eqref{mduality} we have $ \eext 0R2 (I,I)= \eext {-5}{\mathfrak
    m}3 (I,I)$ and we conclude by the exact sequence associated to
  \eqref{mspect},
  \begin{equation} \label{lemm25} 0 \rightarrow \EExt {-5}R1 (I, H_{\mathfrak
      m}^2(I)) \rightarrow {_{-5}\!\Ext_{\mathfrak m}^3}(I,I) \rightarrow
    {_{-5}\!\Hom_R}(I, H_{\mathfrak m}^3(I)) \rightarrow {_{-5}\!\Ext _R^2}(I,
    H_{\mathfrak m}^2(I)) \rightarrow \ \ . \end{equation}\\[-8mm]
\end{proof} 
Under more specific assumptions we are able to prove,
\begin{proposition} \label{them} Let $X$ be any surface in $\proj{4}$ and
  suppose $$ {_{0}\!\Hom_R}(I, M_1)= \EExt {-5}R1(I, M_1) ={_{-5}\!\Hom_R}(I ,
  M_2)=0.$$ Then $X$ is unobstructed and $$ \dim_{(X)}\HH(d,p,\pi) = 1 +
  \delta^3(-5) - \delta^2(-5) + \delta^1(-5)- {_{-5}\!\hom_R}(I, M_1).$$ 
\end{proposition}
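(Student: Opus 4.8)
The plan is to reduce the statement to two facts about the homogeneous ideal $I=I_X$, namely $\EExt 0R2 (I,I)=0$ and $H^0(\sN_X)\cong \EExt 0R1 (I,I)$, and then to transfer unobstructedness from the graded deformation functor of $I$ to the local Hilbert functor; once these are in place the dimension formula is pure bookkeeping with Proposition~\ref{dp0.3}. Throughout I use that $I$ is saturated, so $H_\mfm^0(I)=H_\mfm^1(I)=0$, $H_\mfm^2(I)\cong M_1$ and $H_\mfm^3(I)\cong M_2$.

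The computational heart is the vanishing $\EExt 0R2 (I,I)=0$. First I would dualize by \eqref{mduality} (with $n=2$, so $n+3=5$), applied to $N_1=N_2=I$, $i=3$, $v=-5$, to get $\EExt 0R2 (I,I)^\vee\cong \EExt {-5}\mfm3 (I,I)$. Then I would run the spectral sequence \eqref{mspect} at $v=-5$ converging to $\EExt {-5}\mfm{p+q} (I,I)$: in total degree $3$ the only terms not forced to vanish by $H_\mfm^0(I)=H_\mfm^1(I)=0$ are $E_2^{0,3}={_{-5}\!\Hom_R}(I,M_2)$ and $E_2^{1,2}=\EExt {-5}R1 (I,M_1)$, and there are no incoming differentials in total degree $3$. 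Both groups vanish by hypothesis, so $\EExt {-5}\mfm3 (I,I)=0$ and hence $\EExt 0R2 (I,I)=0$. This is the surface analogue of the curve computation behind Remark~\ref{thm26}, where the single condition ${_{-4}\!\Hom_R}(I_C,M)=0$ plays the role these two $v=-5$ conditions play here.

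Next I would identify the tangent space and transfer unobstructedness. From \eqref{mseq} at $v=0$, together with $\EExt 0\mfm2 (I,I)\cong {_0\!\Hom_R}(I,M_1)=0$ (the $v=0$ hypothesis, read off from \eqref{mspect} exactly as above), I get $H^0(\sN_X)\cong \EExt 0R1 (I,I)$; and with $\EExt 0\mfm2 (I,I)=0$ the same sequence reads $0\to \EExt 0R2 (I,I)\to H^1(\sN_X)$, so the obstruction map $\EExt 0R2 (I,I)\hookrightarrow H^1(\sN_X)$ is injective. Recalling (as in the text around \eqref{mseq}, cf.\ \cite{K2}) that $\EExt 0R1 (I,I)$ and $\EExt 0R2 (I,I)$ are the tangent and obstruction spaces of the graded deformation functor of $I$, and that the natural comparison morphism from that functor to the local Hilbert functor induces exactly the maps of \eqref{mseq} on tangent and obstruction spaces, I conclude that this comparison morphism has bijective differential and injective obstruction map. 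By the standard smoothness criterion it is therefore smooth, and being bijective on tangent spaces it is an isomorphism of functors. Since $\EExt 0R2 (I,I)=0$ makes the graded functor smooth, $X$ is unobstructed.

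Finally, unobstructedness gives $\dim_{(X)}\HH(d,p,\pi)=h^0(\sN_X)=\eext 0R1 (I,I)$. Using Proposition~\ref{dp0.3} and $\eext 0R2 (I,I)=0$ I rewrite $\eext 0R1 (I,I)=1+\delta^3(-5)-\delta^2(-5)+\delta^1(-5)-\eext 0R3 (I,I)$, and $\eext 0R3 (I,I)={_{-5}\!\hom_R}(I,M_1)$ by the duality recorded just before Proposition~\ref{thmeulerIX}; this is precisely the asserted formula. The main obstacle is the functor-comparison step: one must verify that the maps in \eqref{mseq} genuinely coincide with the differential and the obstruction map of the comparison morphism between the graded deformation functor of $I$ and the Hilbert functor, so that the abstract smoothness criterion applies, rather than merely matching the underlying $k$-vector spaces. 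Everything else is either the spectral-sequence computation of the second paragraph or the dimension bookkeeping above.
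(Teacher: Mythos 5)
Your proposal is correct and follows essentially the same route as the paper: the core vanishing $\EExt 0R2 (I,I)=0$ obtained from the duality \eqref{mduality} plus the spectral sequence \eqref{mspect} at $v=-5$ (the paper phrases this as ``the arguments of \eqref{lemm25}''), the resulting smoothness with $\dim_{(X)}\HH(d,p,\pi)=\eext 0R1 (I,I)$, and the final bookkeeping via Proposition~\ref{dp0.3} together with $\eext 0R3 (I,I)={_{-5}\!\hom_R}(I,M_1)$ are exactly the paper's steps. The one step you flag as the main obstacle --- verifying that \eqref{mseq} really computes the differential and obstruction map of the comparison morphism from graded deformations of $I$ to the local Hilbert functor, so that ${_{0}\!\Hom_R}(I,M_1)=0$ yields $\HH_{\gamma}\cong\HH(d,p,\pi)$ at $(X)$ --- is precisely what the paper disposes of by citing \cite{K79}, Rem.\ 3.7 (cf.\ \cite{W1}, Thm.\ 2.1), so your sketch is a re-derivation of the cited result rather than a different argument.
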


\begin{proof} Due to \cite{K79}, Rem.\! 3.7 (cf. \cite{W1}, Thm.\! 2.1),
  $\HH(d,p,\pi)_{\gamma} \cong \HH(d,p,\pi)$ at $(X)$ provided $
  {_{0}\!\Hom_R}(I,M_1) = 0$. Then we see by the arguments of \eqref{lemm25}
  that $ \EExt 0R2 (I,I)=0$. It follows that $\HH(d,p,\pi)_{\gamma}$ is smooth
  at $(X)$ of dimension $ \eext 0R1 (I,I)$. Then we conclude by
  Proposition~\ref{dp0.3}.
\end{proof}
\begin{remark} \label{H1NX} {\rm (i)} Proposition~\ref{them} is mainly proved
  in \cite{K5}, sect. 1. In \cite{K5} we moreover use \eqref{mspect} and
  \eqref{mduality} to prove a vanishing result for $H^1(\sN_X)$.  Indeed we
  show that $H^1(\sN_X)=0$ provided
 $$ H^1(\sI_X (n_{2,i}))= H^1(\sI_X (n_{2,i}-5))=0 \ {\rm and} \
 H^2(\sI_X(n_{1,i}))= H^2(\sI_X (n_{1,i}-5)) =0 \ {\rm for \ every} \ i. $$

 {\rm (ii)} Let $X$ be an arithmetically Cohen-Macaulay surface in $\proj{4}$.
 Then $M_1= M_2=0$ and $\delta^1(v)=\delta^2(v)=0$ for every $v$ and we can
 use Proposition~\ref{them} to see that $X$ is unobstructed and $
 \dim_{(X)}\HH(d,p,\pi) = 1 + \delta^3(-5)=1 -
 \delta^0(0)$. 
 This coincides with \cite{El}.
\end{remark}
We will illustrate the results of this section by an example. If the
assumptions of Proposition~\ref{them} or Remark~\ref{H1NX} are not satisfied,
then the surface may be obstructed, and we refer to section 8 for such
examples.
\begin{example} \label{e3.10} Let $X$ be the smooth rational surface with
  invariants $d = 11$, $\pi = 11$ (no 6-secant) and $K^2 = -11$ (cf. \cite{P}
  or \cite{DES}, B1.17, see also \cite{BoRa}). In this case the graded modules
  $M_i \simeq \oplus H^i(\sI_X(v))$ are supported at two consecutive degrees
  and satisfy
\begin{align*}
  \dim H^1 (\sI_X(3)) & = 2, & \dim H^2 (\sI_X(1)) & = 3, \\
   \dim H^1 (\sI_X(4)) & = 1, & \dim H^2 (\sI_X(2)) & = 1.
\end{align*}
Moreover $I = I_X$ admits a minimal resolution (cf. \cite{DES})
\begin{equation*}
  0 \to R(-9) \to R(-8)^{\oplus 3} \oplus R(-7)^{\oplus 3} \to R(-7)^{\oplus
    2} \oplus
  R(-6)^{\oplus 12} \to R(-5)^{\oplus 10} \to I \to 0 .
\end{equation*}
It follows that $\HHom {-5}R (I, M_2) = 0$ and $\EExt {-5}{R}i (I, M_1) = 0$
for $i = 0,1$. By Proposition~\ref{them}, $\HH(d,p,\pi)$ is smooth
at $(X)$ and $\dim_{(X)} \HH(d,p,\pi) =$
\begin{gather*} \label{} 1+ \delta ^{3} (-5)- \delta ^{3} (-5)+ \delta ^{1}
  (-5)=1+12 h^2 (\sI_X(1))-h^2 (\sI_X(2))+3 h^1 (\sI_X(3))-h^1 (\sI_X(4))=41.
  \end{gather*}
In this example it is, however, easier to use Proposition~\ref{dp0.3} to get
\begin{equation*}
  1+\delta^3(-5)-\delta^2(-5)+\delta^1(-5) = \chi
  (\sN_X)-\delta^3(0)+\delta^2(0)-\delta^1(0) = 5(2d+\pi -1)-d^2+2\chi (\sO_X)
  = 41 
\end{equation*}
because $\delta^i(0)$ for $i > 0$ is easily seen to be zero. We may also use
Remark~\ref{H1NX} to see $H^1(\sN_X)=0$. Since any smooth surface
satisfies $$H^2(\sN_X)=0 \ \ \ \ {\rm provided} \ \ \ \ H^2(\sO_X(1))=0 $$
(due to the existence of the natural surjection $\sO_X(1)^5 \to \sN_X$), we
may conclude as above directly from $ \dim H^0(\sN_X) = \chi (\sN_X) = 41$.
\end{example}
One may hope that a generalization of Theorem~\ref{thmeulerICM} to surfaces
will contain a more complete result. To do it we need to generalize some of
the theorems in \cite{MDP1} to surfaces. This will be done in the next two
sections. The biliaison statements of Remark~\ref{sumex} will be generalized
to any codimension 2 lCM equidimensional subscheme of $\proj{n+2}$ and carried
out in later sections.

\section{The smoothness of the ``morphism'' $\varphi : \HH_{\gamma,\rho}
  \to \VV_\rho$.}

In this section we prove the local smoothness of the ``morphism'' $\varphi :
\HH_{\gamma,\rho} \to \VV_\rho$ = isomorphism classes of graded $R$-modules
$M_1$ and $M_2$ satisfying $\dim (M_i)_v = \rho_i(v)$ and commuting with $b$,
given by sending the surface $X$ onto the class of the triple $(M_1,M_2,b)$
where $M_i = H_*^i(\sI_X)$ and $b \in \EExt 0R2 (M_{2},M_{1})$ is the
extension determined by $X$ (cf. Remark~\ref{r0.2} (ii)). To prove our theorem
we first take in Proposition~\ref{p0.1} a close look to Bolondi's short exact
``resolution'' of the homogeneous ideal of a surface $X$ (\cite{B2}) and how
we can define the extension $b$ given in Horrock's paper \cite{Ho}. As in
\cite{DES} the ideal is the cokernel of some syzygy modules of $M_1$ and
$M_2$, up to direct free factors. The proposition somehow uses and extends a
result of Rao for a curve $C$, namely that the minimal resolution of $I_C$ can
be put in the following form
\begin{equation} \label{resoluMI2} 0 \rightarrow L_4 \stackrel{\sigma \oplus
    0}{\longrightarrow} L_3 \oplus F_2 \rightarrow F_1 \rightarrow I_C
  \rightarrow 0 \  \
\end{equation}
where $ 0 \rightarrow L_4 \stackrel{\sigma}{\rightarrow} L_3 \rightarrow ...
\rightarrow M \rightarrow 0$ is a minimal resolution of $M$ and $F_i$ are free
modules (\cite{R}). Moreover we use local flatness criteria to generalize
Bolondi's construction in \cite{B2} so that it works for flat resolutions over
a local ring, rather than over a field. This is also the approach of
\cite{HMDP} in the curve case.

Let $X$ be a surface in $\proj{4}$ and let
\begin{equation} \label{1}
  \begin{gathered}
    0 \to P_5 \xrightarrow{\,\sigma_5\,} P_4 \xrightarrow{\,\sigma_4\,} P_3
    \xrightarrow{\,\sigma_3\,} \dots \longrightarrow P_0
    \xrightarrow{\,\sigma_0\,} M_1 \to 0, \\
    0 \to Q_5 \xrightarrow{\,\tau_5\,} Q_4 \xrightarrow{\,\tau_4\,} Q_3
    \xrightarrow{\,\tau_3\,} \dots \longrightarrow Q_0
    \xrightarrow{\,\tau_0\,} M_2 \to 0
  \end{gathered}
\end{equation}
(for short $\sigma_\bullet : P_\bullet \to M_1 \to 0$ and $\tau_\bullet :
Q_\bullet \to M_2)$ be minimal free resolutions over $R$. Let $K_\bullet$ and
$L_\bullet$ be the {\it i}th syzygies of $M_1$ and $M_2$ respectively, i.e.
$K_i = \ker \sigma_i$ and $L_i = \ker \tau_i$. Recall that syzygies have nice
cohomological properties (\cite{DES}, \cite{B2}), for instance
\begin{equation} \label{2}
  \begin{gathered}
    M_1 = H_*^1(\tilde K_1) \quad\text{and}\quad H_*^2(\tilde K_1)
    = H_*^3(\tilde K_1)  = 0, \\
    M_2 = H_*^3(\tilde L_3) \quad\text{and}\quad H_*^1(\tilde L_3)
    = H_*^2(\tilde L_3) = 0.
  \end{gathered}
\end{equation}  
There is a strong connection between the resolutions \eqref{1}, the minimal
resolution \eqref{res3} of $I = I_X$
%
%
and the following minimal resolutions of $A = H_*^0(\sO_X)$;
\begin{equation} \label{4}
  0 \to P_3' \xrightarrow{\,\sigma_3'\,} P_2' \xrightarrow{\,\sigma_2'\,}  P_1'
  \xrightarrow{\,\sigma_1'\,} P_0 \oplus  R  \to A \to 0
\end{equation}
where the morphism $P_0 \oplus R \to A$ of \eqref{4} is naturally deduced from
$P_0 \to M_1$ of \eqref{1} and the exact sequence $R \to A \to M_1 \to 0$ and
where $\sigma_\bullet' : P_\bullet' \to \ker(P_0 \oplus R \to A) \to 0$ is a
minimal $R$-free resolution (cf. \cite{MDP1}, p.\! 46). The connection we have
in mind can be formulated and proved for a {\it family} of surfaces with
constant cohomology, at least locally, e.g. we can replace the field $k$ by a
local $k$-algebra $S$. 
Now, in \cite{B2}, Bolondi uses some ideas of Horrocks \cite{Ho} to define an
element $b \in \EExt 0R2 (M_2,M_1)$ and the ``Horrocks triple''
$D=:(M_1,M_2,b)$ associated to $X$ such that, conversely given $D =
(M_1,M_2,b)$ where $M_i$ are $R$-modules of finite length, there is a surface
$X$ whose homogeneous ideal $I$ is defined in the following way. For some
integer $h \in \mathbb Z$ there is an exact sequence $0 \to L_3' \to K_1' \to
I(h) \to 0$ where $L_3'$ (resp. $K_1'$) is isomorphic to the syzygy $L_3$
(resp. $K_1$) up to some $R$-free module $F_L$ (resp. $F_K$). Up to biliaison
this construction is the inverse to the first approach which defines
$(M_1,M_2,b)$ from a given $X$. To prove the main smoothness theorem of this
section in an easy way, we need to adapt the approach above by determining
$F_L$ and $F_K$ more explicitly and such that it works over $S$. Using also
ideas of Rao's paper \cite{R}, we can prove

\begin{proposition} \label{p0.1} Let $X$ be a surface in $\proj{4}_S$, flat
  over a local noetherian $k$-algebra $S$, and suppose $M_1 = H_*^1(\sI_X)$,
  $M_2 = H_*^2(\sI_X)$ and $I=I_X$ are flat $S$-modules. Then there exist
  minimal $R$-free resolutions of $M_i$, $I$ and $A = H_*^0(\sO_X)$ as in
  \eqref{1}, \eqref{res3} and \eqref{4}, with $R = S[X_0,X_1,..,X_4]$.
  Moreover let $L_3'= \ker \sigma_1'$ and let $K_1'$ be the kernel of the
  composition of $\sigma_1'$ and the natural projection $P_0 \oplus R \to
  P_0$, cf. \eqref{4}. Then there is an exact sequence
  \begin{equation} \label{5} 
   0 \to L_3'\xrightarrow{\,b'\,} K_1' \to I \to 0
\end{equation}
of flat graded $S$-modules and a surjective morphism $d : \HHom {0}{R} (L_3',
K_1') \to \EExt 0R2 (M_2,M_1)$, defining a triple $(M_1,M_2,b)$ where $b =
d(b')$, coinciding with the uniquely defined ``Horrocks triple'' of \cite{Ho}
or \cite{B2}. Moreover $L_3'$ (resp. $K_1'$) is the direct sum of a 3rd
syzygy of $M_2$ (resp. 1st syzygy of $M_1$) up to a direct free factor, i.e.
there exist $R$-free modules $F_L$ and $F_K$ such that the horizontal exact
sequences in the diagram
\begin{gather*}
  0 \longrightarrow  K_1' \hspace{10pt} \longrightarrow
  \hspace{10pt}
  P_1'   \hspace{18pt}   \longrightarrow   \hspace{10pt}   P_0 \\
  \hspace{25pt} \downarrow \hspace{22pt} \circ \hspace{22pt} \downarrow
  \hspace{22pt}   \circ  \hspace{28pt}    \| \\
  0 \to K_1 \oplus F_K \to P_1 \oplus F_K \xrightarrow{\sigma_1 \oplus 0} P_0
\end{gather*}
are isomorphic (i.e., the downarrows are isomorphisms). Similarly, the exact
sequences $0 \to Q_5 
\stackrel{(\tau_5 ,0)}{\longrightarrow} Q_4 \oplus F_L \to L_3 \oplus F_L \to
0$ and $0 \to P_3' \to P_2' \to L_3' \to 0$ are isomorphic as well.
\end{proposition}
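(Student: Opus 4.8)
The plan is to realize Proposition~\ref{p0.1} as a relative, $S$-flat version of the Rao--Bolondi construction sketched in \eqref{resoluMI2}: build every resolution by lifting from the closed fibre $S\to k$ and control flatness at each step with the local criterion. First I would produce the minimal $R$-free resolutions over $R=S[X_0,\dots,X_4]$. Since $M_1,M_2,I$ are finitely generated graded $R$-modules flat over $S$, I choose minimal generators lifting a $k$-basis of $M_i\otimes_S k$ (graded Nakayama with respect to the graded maximal ideal $\mfm_S\cdot R+R_+$), obtain a free cover $P_0\to M_1$, and observe that its kernel is again $S$-flat because $\Tor^S_{>0}(M_1,-)=0$ forces $\Tor^S_1(\ker,-)=0$; iterating gives \eqref{1}, and since reduction mod $\mfm_S$ carries these to the minimal resolutions of $M_i\otimes_S k$ they are themselves minimal and $S$-flat. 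The cohomology sequence of $0\to\sI_X\to\sO_{\proj{4}_S}\to\sO_X\to 0$, together with $H^1_*(\sO_{\proj{4}_S})=0$ and cohomology-and-base-change, yields the four-term exact sequence $0\to I\to R\to A\to M_1\to 0$ over $S$; lifting $\sigma_0:P_0\to M_1$ through the surjection $A\to M_1$ (projectivity of $P_0$) and adjoining $R\to A$ produces the surjection $P_0\oplus R\to A$ of \eqref{4}, which I resolve minimally to get $\sigma_\bullet'$.

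Next I would extract \eqref{5} by pure bookkeeping. Writing $N=\ker(P_0\oplus R\to A)=\im\sigma_1'$, a direct computation of the first projection restricted to $N$ shows that its image is $\ker\sigma_0=\im\sigma_1$ and its kernel is $\{(0,r):r\in I\}\cong I$, giving $0\to I\to N\to\im\sigma_1\to 0$. Since $K_1'=\ker(P_1'\to P_0)=\sigma_1'^{-1}(I)$ and $L_3'=\ker\sigma_1'=\sigma_1'^{-1}(0)$, pulling this back along the surjection $\sigma_1':P_1'\to N$ gives exactly $0\to L_3'\xrightarrow{b'}K_1'\to I\to 0$; all three terms are $S$-flat, the outer ones being syzygies of flat modules by the first step and $I$ being flat by hypothesis.

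The heart of the argument is the structural identification, and this is where I expect the main difficulty. For the first isomorphism I would note that the composite $P_1'\to P_0$ and $\sigma_1:P_1\to P_0$ are two graded free covers of the same submodule $\im\sigma_1\subseteq P_0$, the second minimal; a splitting lemma for graded free covers over $R$ (after change of basis any cover is the minimal one together with a complementary free summand mapping to zero, provable by graded Nakayama) then yields $P_1'\cong P_1\oplus F_K$ with map $\sigma_1\oplus 0$, hence $K_1'\cong K_1\oplus F_K$. For the companion statement $L_3'\cong L_3\oplus F_L$ I would sheafify \eqref{5} and read off from its long exact cohomology sequence, using \eqref{2} and the vanishing of the intermediate cohomology of free bundles on $\proj{4}$, that $\tilde L_3'$ has exactly the same intermediate cohomology modules as $\tilde L_3$ (namely $H^3_*=M_2$ and $H^1_*=H^2_*=0$); a Horrocks-type uniqueness statement, that a syzygy sheaf with prescribed intermediate cohomology determines the module up to a free summand, then forces the minimal presentation $0\to P_3'\to P_2'\to L_3'\to 0$ from \eqref{4} to be isomorphic to $0\to Q_5\xrightarrow{(\tau_5,0)}Q_4\oplus F_L\to L_3\oplus F_L\to 0$. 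The delicate point, and the genuinely new ingredient beyond the field-level case of \cite{B2}, is that each such comparison must be performed over $S$ while preserving $S$-flatness; here the local flatness criterion guarantees that an isomorphism on the closed fibre between $S$-flat modules with free cokernel data lifts to an isomorphism over $S$, so the Rao--Bolondi identification propagates from the special fibre to the relative setting.

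Finally I would obtain the surjection $d:\HHom 0R(L_3',K_1')\to\EExt 0R2(M_2,M_1)$ and the equality $b=d(b')$ by transporting a homomorphism $L_3'\to K_1'$ through the decompositions $L_3'\cong L_3\oplus F_L$ and $K_1'\cong K_1\oplus F_K$ to a map $L_3\to K_1$, and then applying the comparison map that the resolutions \eqref{1} induce from $\Hom(L_3,K_1)$ to $\EExt 0R2(M_2,M_1)$, the syzygies $L_3$ and $K_1$ representing $M_2$ and $M_1$ up to shift. Surjectivity of $d$ and the coincidence of $b=d(b')$ with the canonically defined Horrocks extension then follow exactly as in \cite{Ho} and \cite{B2}, once one checks that these comparison maps commute with the relevant connecting homomorphisms; carrying this out over $S$ is again a matter of flatness rather than of any new idea.
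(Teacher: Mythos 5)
Your construction of the minimal resolutions over $R=S[X_0,\dots,X_4]$, your derivation of the exact sequence \eqref{5} (a hands-on version of the paper's snake-lemma diagram), and your identification $K_1'\cong K_1\oplus F_K$ via the splitting lemma for graded free covers are all sound and agree in substance with the paper's proof. The gap is in the companion identification of $L_3'$ with $L_3\oplus F_L$. You establish it on the closed fibre by Horrocks-type uniqueness (legitimate there, once one notes that $\tilde L_3'$ is locally free because $X\otimes_S k$ is lCM of codimension 2), and then assert that ``an isomorphism on the closed fibre between $S$-flat modules with free cokernel data lifts to an isomorphism over $S$'' by the local flatness criterion. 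No such principle exists: two $S$-flat graded modules with isomorphic closed fibres need not be isomorphic over $S$ --- that is exactly the statement that flat deformations of a module can be nontrivial, which is the whole point of the functors $Def_M$ and of the spaces $E_\rho$ and $\VV_\rho$ in this very paper. Concretely, lifting a fibre-level map of presentations $(Q_5\to Q_4\oplus F_L)\otimes_S k\to (P_3'\to P_2')\otimes_S k$ entry-wise to $R$ does not give a chain map over $R$; the failure of commutation is a genuine obstruction that your argument never kills. So the step you yourself single out as ``the genuinely new ingredient beyond the field-level case'' is precisely the step left unproved, and since your definition of $d$ and of the triple $(M_1,M_2,b)$ is routed through the decomposition $L_3'\cong L_3\oplus F_L$, the gap propagates to the last part as well.

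The paper's proof avoids any lifting from the fibre by following Rao's duality argument (Thm.\ 2.5 of \cite{R}): sheafifying $0\to P_3'\to P_2'\to L_3'\to 0$ and using $M_2\simeq H_*^3(\tilde L_3')$ (obtained from the sheafification of \eqref{5}) yields the exact sequence
\begin{equation*}
  H_{*}^4(\tilde P_2')^{\vee} \to H_{*}^4(\tilde P_3')^{\vee} \to M_2^{\vee}
  \simeq \Ext_R^5 (M_2,R(-5)) \to 0 ,
\end{equation*}
i.e.\ a free presentation of $\Ext_R^5(M_2,R(-5))$, while applying $\Hom_R(-,R)$ to the minimal resolution $Q_\bullet\to M_2$ gives the \emph{minimal} presentation $Q_4^{\vee}\to Q_5^{\vee}\to\Ext_R^5(M_2,R)\to 0$ of the \emph{same} module. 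Because both complexes present one canonically identified module, a comparison map exists by projectivity, and graded Nakayama over $R$ (valid since $R_0=S$ is local) splits the non-minimal presentation as the minimal one plus a trivial complex; dualizing back, using $H_*^4(\tilde P_i')^{\vee}(5)\simeq P_i'^{\vee}$, gives the asserted isomorphism of $0\to P_3'\to P_2'\to L_3'\to 0$ with $0\to Q_5\to Q_4\oplus F_L\to L_3\oplus F_L\to 0$ directly over $S$. If you wish to salvage your route, you must replace the lifting principle by a comparison of this kind, between objects that are canonically presentations of a single $S$-flat module, rather than between objects that are only abstractly isomorphic after reduction to $k$.
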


\begin{remark} \label{r0.2} {\rm (i)} By a surface $X \subseteq \proj{4}_S$ in
  Proposition~\ref{p0.1} we actually mean that $X \times_{\Spec(S)} \Spec(k)$
  is a surface (i.e. locally Cohen-Macaulay and equidimensional of dimension
  2).

  {\rm (ii)} The proposition above, defining the ``Horrocks triple''
  $(M_1,M_2,b)$ from a given $X \subseteq \proj{4}_S$, can be regarded as our
  definition of the ``morphism'' $\varphi : \HH_{\gamma,\rho} \to \VV_\rho$ =
  isomorphism classes of graded $R$-modules $M_1$ and $M_2$ satisfying $\dim
  (M_i)_v = \rho_i(v)$ and commuting with $b$.
\end{remark}

\begin{proof} We obviously have minimal resolutions of $M_i\otimes_S k$,
  $I_X\otimes_S k$ and $A\otimes_S k$ as described above with $R =
  k[X_0,X_1,..,X_4]$, cf. \eqref{1}, \eqref{res3} and \eqref{4}. These
  resolutions can easily be lifted to the minimal resolution of the
  proposition by cutting into short exact sequences and using the flatness of
  the modules involved.

  By the definition of $L_3'$ and $K_1'$ there is a commutative diagram
\begin{gather*}
  \hspace{105pt} 0 \hspace{5pt} \longrightarrow \hspace{5pt} R \hspace{8pt}
  \longrightarrow \hspace{5pt}R
  \longrightarrow 0 \\
  \hspace{117pt}
  \downarrow  \hspace{15pt}    \circ  \hspace{15pt}    \downarrow \\
  \hspace{32pt} 0 \longrightarrow L_3' \hspace{3pt}  \longrightarrow P_1'
  \longrightarrow P_0  \oplus  R  \longrightarrow A  \longrightarrow 0 \\
  \hspace{35pt} \downarrow \hspace{12pt} \circ \hspace{12pt} \| \hspace{15pt}
  \circ \hspace{15pt}
  \downarrow  \hspace{15pt}    \circ  \hspace{15pt}    \downarrow \\
  \hspace{38pt} 0 \longrightarrow K_1' \longrightarrow P_1' \hspace{4pt}
  \longrightarrow \hspace{4pt} P_0 \hspace{4pt} \longrightarrow \hspace{8pt}
  M_1 \longrightarrow 0
\end{gather*}
and we get easily the exact sequence \eqref{5} by the snake lemma. Comparing
the lower exact sequence in the last diagram with the following part of the
minimal resolution of $M_1$; $\to P_1 \to P_0 \to M_1 \to 0$, we get the
commutative diagram of the proposition because $K_1$ is the 1st syzygy of
$M_1$.

To prove the corresponding commutative diagram for $L_3'$ and $L_3$, we
sheafify \eqref{5}, and we get $M_2 \simeq H_{*}^3(\tilde L_3')$. Recalling
the definition of $L_3'$, we get the exact sequence
\begin{equation*}
  H_{*}^4(\tilde P_2')^{\vee} \to H_{*}^4(\tilde P_3')^{\vee} \to M_2^{\vee}
  \simeq  \Ext_R^5 (M_2,R(-5)) \to 0  
\end{equation*}
which we compare to the {\it minimal} resolution
\begin{equation*}
  Q_4^{\vee} \to Q_5^{\vee} \to \Ext_R^5 (M_2, R) \to 0
\end{equation*}
obtained by applying $\Hom_R (-,R)$ to the resolution $Q_\bullet \to M_2$.
Recalling $H_*^4(\tilde P_i')^{\vee} (5) \simeq P_i'^{\vee}$, we get the
conclusion, as in the proof of Thm.\! 2.5 of \cite{R}.

Finally to define the morphism $d$ and to see that the defined triple
$(M_1,M_2,b)$ is the one given by Horrocks' construction (seen to be unique by
\cite{Ho}), one may consult \cite{B2} for the case $S = k$ which, however,
easily generalizes to a local ring $S$. The important part is as follows. The
definition of $K_1'$ and $K_0$ implies $\Ext^2 (M_2,M_1) \simeq \Ext^3
(M_2,K_0) \simeq \Ext^4 (M_2,K_1')$. Next, by Gorenstein duality, we know
$\Ext_R^i (M_2,R) = 0$ for $i \neq 5$. Hence the definition of the syzygies
$L_i$ leads to $\Ext^4 (M_2,K_1') \simeq \Ext^3 (L_0,K_1') \simeq
\Ext^1 (L_2,K_1')$ and to a diagram 
\begin{equation} \label{6}
  \begin{gathered}
    \HHom 0R (Q_3,K_1') \to \HHom 0{} (L_3,K_1') \to \EExt 0{}1 (L_2, K_1')
    \to 0 \\
   \hspace{62pt}  \downarrow  \hspace{80pt} \downarrow \\
   \hspace{80pt}  \HHom 0{} (L_3',K_1') \hspace{18pt} \EExt 0R2 (M_2,M_1)
  \end{gathered}
\end{equation}
where the horizontal sequence is exact and the first (resp. second) vertical
map is injective and split (resp. an isomorphism). We let $d : \HHom {0}{R}
(L_3', K_1') \to \EExt 0R2 (M_2,M_1)$ be the obvious composition, first using
the ``inverse'' of the split map, and we get the conclusions of the
proposition.
\end{proof}

Now we will show the smoothness of $\varphi$. Indeed
Proposition~\ref{p0.1} allows a rather easy proof of

\begin{theorem} \label{t1.1} The ``morphism'' $\varphi : \HH_{\gamma,\rho} \to
  \VV_\rho$ = isomorphism classes of graded $R$-modules $M_1$ and $M_2$
  satisfying $\dim (M_i)_v = \rho_i(v)$ and commuting with $b$, is smooth
  (i.e. for any surface $X$ in $\proj{4}_k$, the corresponding local
  deformation functor of $\varphi$, 
  given by $(X_S \subseteq \proj{4}_S) \mapsto {\rm class \ of \ }
  (M_{1S},M_{2S},b_S)$, see right below, is formally smooth).
\end{theorem}

\begin{proof} Let $T \to S \to k$ be surjections of local artinian $k$-algebras
  with residue fields $k$ such that $\ker (T \to S)$ is a $k$-module via $T
  \to k$. Let $X_S \subseteq \proj{4}_S$ be a deformation of $X \subseteq
  \proj{4}$ to $S$ with constant postulation $\gamma$ and constant deficiency
  $\rho = (\rho_1,\rho_2)$. Let $(M_{1S},M_{2S},b_S)$ be the ``Horrocks
  triple'' defined by $X_S$ (cf. Proposition~\ref{p0.1}). Note that $M_{iS}$
  for $i=1,2$ are $S$-flat since $ \rho_i$ are constant. Let
  $(M_{1T},M_{2T},b_T)$ be a given deformation of $(M_{1S},M_{2S},b_S)$ to
  $T$. To prove the smoothness at $(X)$, we must show the existence of a
  deformation $X_T \subseteq \proj{4}_T$ of $X_S \subseteq \proj{4}_S$, whose
  corresponding ``Horrocks triple'' is precisely $(M_{1T},M_{2T},b_T)$, modulo
  graded isomorphisms of  $(M_{1T},M_{2T})$ commuting with $b_T$.

  We have by Proposition~\ref{p0.1} minimal resolutions of $M_{iS}$, $I_{X_S}$
  and $A_S$ over $R_S = S[X_0,X_1,..,X_4]$ as in \eqref{res3},
  \eqref{1}-\eqref{4} and flat $S$-modules $L_{iS}$, $K_{iS}$, $L_{3S}'$,
  $K_{1S}'$ fitting into the exact sequence \eqref{5} and a surjection $d$
  defined as the composition (cf. \eqref{6})
\begin{equation} \label{12}
  \begin{gathered}
    \HHom 0{R_S} (L_{3S}',K_{1S}') \to \HHom 0{R_S} (L_{3S}, K_{1S}') \to
    \EExt 0{R_S}1 (L_{2S}, K_{1S}') \simeq \EExt 0{R_S}2 (M_{2S}, M_{1S}) \\
    \hspace{10pt} \inup
    \hspace{90pt}  \inup  \hspace{90pt} \inup  \hspace{90pt}  \inup \\
    \hspace{10pt} b_S' \hspace{30pt} \longrightarrow \hspace{30pt} \beta_S
    \hspace{30pt} \longrightarrow \hspace{30pt} b_S \hspace{30pt}
    \longrightarrow \hspace{30pt} b_S
  \end{gathered}
\end{equation}
``on the $S$-level'' ($\beta_S$ is simply the image of $b_S'$ via the map of
\eqref{12}) which lifts the corresponding resolutions/modules/sequences on the
``$k$-level''. Since $M_{iT}$ are given deformations of $M_{iS}$, we can lift
the minimal resolutions $\sigma_{\bullet S} : P_{\bullet S} \to M_{1S}$ and
$\tau_{\bullet S} : Q_{\bullet S} \to M_{2S}$ further to $T$, thus proving the
existence of deformations $L_{iT}$, $K_{iT}$, $L_{3T}'$, $K_{1T}'$ of
$L_{iS}$, $K_{iS}$, $L_{3S}'$, $K_{1S}'$ resp. (the free submodules $F_{LS}$
and $F_{KS}$ of $L_{3S}'$ and $K_{1S}'$ are lifted trivially). So we have a
diagram \eqref{6} and hence a sequence \eqref{12} ``on the $T$-level'' where
the elements $b_{T}'$ and $\beta_T$ are not yet defined. The element $b_T \in
\EExt 0{}1 (L_{2T}, K_{1T}') \simeq \EExt 0{R_T}2 (M_{2T},M_{1T})$ is, however,
given and if we consider the diagram (cf. \eqref{6})
\begin{gather*}
  \HHom 0{R_T} (Q_{3T},K_{1T}') \to \HHom 0{R_T} (L_{3T}, K_{1T}') \to \EExt
  0{R_T}1 (L_{2T}, K_{1T}')
  \to 0 \\
  \downarrow \hspace{45pt} \circ \hspace{45pt} \downarrow \alpha
  \hspace{35pt} \circ \hspace{40pt} \downarrow \\
  \HHom 0{R_S} (Q_{3S},K_{1S}') \to \HHom 0{R_S} (L_{3S}, K_{1S}') \to \EExt
  0{R_S}1 (L_{2S}, K_{1S}') \to 0
\end{gather*}
of exact horizontal sequences and surjective vertical maps deduced from $0 \to
L_{3T} \to Q_{3T} \to L_{2T} \to 0$, we easily get a morphism $\beta_T \in
\HHom 0{} (L_{3T}, K_{1T}')$ such that $\alpha (\beta_T) = \beta_S$, i.e.,
$\beta_T \otimes_T S = \beta_S$. Since $L_{3S}' \simeq L_{3S} \oplus F_{LS}$
we can decompose the map $b_S'$ as $(\beta_S,\gamma_S) \in \HHom 0{} (L_{3S}',
K_{1S}')$, and taking any lifting $\gamma_T : F_{LT} \to K_{1T}'$ of
$\gamma_S$, we get a map $b_{T}'= (\beta_T,\gamma_T) \in \HHom 0{} (L_{3T}',
K_{1T}')$ fitting into a commutative diagram
\begin{align*}
  L_{3T} \oplus  F_{LT} \simeq L_{3T}' & \xrightarrow{b_{T}'} K_{1T}' \\
  \downarrow \hspace{7pt} & \hspace{7pt} \circ \hspace{12pt} \downarrow \\
  L_{3S} \oplus  F_{LS} \simeq L_{3S}' & \xrightarrow{b_S'} K_{1S}' \ .
\end{align*}
%
%
Once having proved the existence of such a commutative diagram, we can define
a surface $X_T$ of $\proj{4}_T$ with the desired properties, thus proving the
claimed smoothness. Indeed it is straightforward to see that $\coker b_{T}'$
is a (flat) deformation of $\coker b_S'= I_{X_S}$ to $T$. Moreover one knows
that an $R_T = T[X_0,X_1,..,X_4]$-{\it module} $\coker b_{T}'$ which lifts a
graded ideal $I{X_S}$ is again a {\it graded ideal} $I_T$ (we can deduce this
information by interpreting the isomorphisms $H^{i-1}(\sN_X) \simeq \EExt
{}{\sO_\proj{}}i (\sI_X, \sI_X)$ for $i = 1,2$ in terms of their deformation
theory from which we see that $ {\widetilde \coker b_{T}'}$ is a sheaf ideal,
and we conclude by taking global sections, cf. \cite{W1} or \cite{krao},
Lem.\! 4.8  for further details). Hence we have proved the
existence of a surface $X_T = \Proj (R_T/I_T)$, flat over $T$ which via $T \to
S$ reduces to $X_S$. By the construction above the corresponding ``Horrocks
triple'' is precisely the given triple $(M_{1T},M_{2T},b_T)$, and we are done.
\end{proof}

\begin{corollary} \label{p1.3} Let $X$ be a surface in $\proj{4}$. If the
  local deformation functors $Def(M_i)$ of $M_i$ are formally smooth (for
  instance if $\EExt 0R2 (M_i, M_i) = 0$) for $i = 1,2$, and if
\begin{equation*}
  \EExt 0R3 (M_2, M_1) = 0 ,
\end{equation*}
then $ \HH_{\gamma,\rho}$ is smooth at $(X)$. 
\end{corollary}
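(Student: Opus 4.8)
The plan is to reduce the statement to formal smoothness of the target functor and then verify that the two hypotheses remove, respectively, the obstructions to deforming the modules and the obstruction to deforming the extension. By Theorem~\ref{t1.1} the local deformation functor of $\varphi$ at $(X)$ is a \emph{smooth} morphism onto the local deformation functor $D$ of $\VV_\rho$ at the Horrocks triple $(M_1,M_2,b)$ associated to $X$. Since the source of a smooth morphism of functors whose target is formally smooth is itself formally smooth (the composite $\mathrm{Def}_{\HH_{\gamma,\rho},(X)}\to D\to(\mathrm{pt})$ is then smooth, a standard fact), and since formal smoothness of the local deformation functor of the scheme $\HH_{\gamma,\rho}$ is precisely smoothness at $(X)$, it suffices to show that $D$ is formally smooth.

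So let $T\twoheadrightarrow S$ be a small surjection of local artinian $k$-algebras with kernel $J$ satisfying $\mfm_T J=0$ (so $J$ is a $k$-vector space), and let $(M_{1S},M_{2S},b_S)\in D(S)$, with $M_{iS}$ flat over $S$ and $b_S\in\EExt 0{R_S}2(M_{2S},M_{1S})$. First I would lift the modules: since the functors $\mathrm{Def}(M_i)$ are formally smooth by hypothesis, each $M_{iS}$ extends to a flat deformation $M_{iT}$ over $T$ for $i=1,2$. (When instead $\EExt 0R2(M_i,M_i)=0$, the obstruction space of $\mathrm{Def}(M_i)$ vanishes and the same conclusion holds.) Flatness then yields short exact sequences $0\to M_i\otimes_k J\to M_{iT}\to M_{iS}\to 0$ of graded $R_T$-modules.

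The remaining, and principal, step is to lift $b_S$ to some $b_T\in\EExt 0{R_T}2(M_{2T},M_{1T})$ reducing to $b_S$, i.e. to prove surjectivity of the base-change map $\EExt 0{R_T}2(M_{2T},M_{1T})\to\EExt 0{R_S}2(M_{2S},M_{1S})$. Choose (as in the proof of Proposition~\ref{p0.1}) a graded $R_T$-free resolution $Q_{\bullet T}\to M_{2T}$ lifting the minimal resolution $Q_\bullet\to M_2$ of \eqref{1}; by flatness $Q_{\bullet T}\otimes_T S$ resolves $M_{2S}$ and $Q_{\bullet T}\otimes_T k$ resolves $M_2$. Applying $\Hom_{R_T}(Q_{\bullet T},-)$ to $0\to M_1\otimes_k J\to M_{1T}\to M_{1S}\to 0$ gives a short exact sequence of complexes, hence, in each internal degree (here degree $0$), a long exact sequence whose relevant portion reads
\begin{equation*}
  \EExt 0{R_T}2(M_{2T},M_{1T}) \longrightarrow \EExt 0{R_S}2(M_{2S},M_{1S})
  \xrightarrow{\ \partial\ } \EExt 0R3(M_2,M_1)\otimes_k J .
\end{equation*}
The outer terms are identified by base change: $\Hom_{R_T}(Q_{\bullet T},M_{1S})=\Hom_{R_S}(Q_{\bullet S},M_{1S})$ computes $\EExt 0{R_S}\bullet(M_{2S},M_{1S})$, while $\Hom_{R_T}(Q_{\bullet T},M_1\otimes_k J)=\Hom_R(Q_\bullet,M_1)\otimes_k J$ computes $\EExt 0R\bullet(M_2,M_1)\otimes_k J$. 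Thus the obstruction to lifting $b_S$ is $\partial(b_S)\in\EExt 0R3(M_2,M_1)\otimes_k J$, which vanishes by the hypothesis $\EExt 0R3(M_2,M_1)=0$. Hence $b_S$ lifts to $b_T$, the triple $(M_{1T},M_{2T},b_T)$ lies in $D(T)$ and reduces to $(M_{1S},M_{2S},b_S)$, so $D$ is formally smooth and $\HH_{\gamma,\rho}$ is smooth at $(X)$.

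The main obstacle is this last step: checking the base-change identifications term by term and confirming that the obstruction sits in the single group $\EExt 0R3(M_2,M_1)$, with \emph{no} cross term mixing the deformations of $M_1$ and $M_2$. This works because the deformation of $M_2$ is absorbed into the choice of lifted resolution $Q_{\bullet T}$ while the deformation of $M_1$ enters only through the coefficient sequence, so no further $\mathrm{Ext}$ contributions arise; one must of course keep track of the grading and of the flatness of all modules over the artinian base throughout, exactly in the spirit of the lifting arguments used in Proposition~\ref{p0.1} and Theorem~\ref{t1.1}.
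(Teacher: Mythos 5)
Your proposal is correct and follows essentially the same route as the paper: reduce via Theorem~\ref{t1.1} to showing that the Horrocks triple $(M_{1S},M_{2S},b_S)$ always lifts to $T$, lift the modules using formal smoothness of $Def(M_i)$, and lift $b_S$ by applying $\Hom$ from (a resolution of) $M_{2T}$ to the coefficient sequence $0 \to J\otimes_k M_1 \to M_{1T}\to M_{1S}\to 0$, so that the obstruction lands in $\EExt 0R3(M_2,M_1)\otimes_k J = 0$. The paper phrases this more tersely (applying $\HHom 0{R_T}(M_{2T},-)$ directly and leaving the base-change identifications implicit, which your resolution argument spells out), but the content is identical.
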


\begin{proof}
  With notations as in the very first part of the proof of Theorem~\ref{t1.1},
  it suffices to prove that there always exists a deformation
  $(M_{1T},M_{2T},b_T)$ of $(M_{1S},M_{2S},b_S)$ since then the proof above
  shows the existence of a deformation $X_T = \Proj (R_T/I_T)$ which reduces
  to $X_S$ via $T \to S$. Since $Def(M_i)$ are formally smooth, it suffices to
  show the existence of $b_T$ which maps to $b_S \in \EExt 0{R_S}2 (M_{2S},
  M_{1S})$. Let $\mathfrak{a}= \ker (T \to S)$. If we apply $ \HHom 0{R_T}
  (M_{2T},-)$ to the exact sequence $$0 \to \mathfrak{a} \otimes_T M_{1T}
  \cong \mathfrak{a} \otimes_k M_{1} \to M_{1T} \to M_{1S} \to 0$$ and use
  that $ \EExt 0R3 (M_2, M_1) = 0$, we see that $\EExt 0{R_T}2 (M_{2T},
  M_{1T}) \to \EExt 0{R_T}2 (M_{2T}, M_{1S})$ is surjective. Hence we get a
  surjective map $\EExt 0{R_T}1 (L_{3T}, K_{1T}') \simeq \EExt 0{R_T}2
  (M_{2T}, M_{1T}) \to \EExt 0{R_S}1 (L_{2S}, K_{1S}') \simeq \EExt 0{R_S}2
  (M_{2S}, M_{1S})$ and we are done.
\end{proof}
\begin{remark} \label{r1.2} If we, as in \cite{MDP1} for curves, had proven the
  existence of the ``fiber'' $\HH_{\gamma,D}$, $D = (M_1,M_2,b)$, of $\varphi$
  as a scheme, then Theorem~\ref{t1.1} must imply the smoothness of
  $\HH_{\gamma,D} $ while \cite{BM1} implies its irreducibility.
  Indeed \cite{BM1}, cor. 3.2 tells that the family of surfaces in $\proj{4}$
  belonging to the same shift of the same liaison class, with fixed
  postulation, form an irreducible family, from which we see that
  $\HH_{\gamma,D}$ is irreducible. Note that we can work with $\HH_{\gamma,D}$
  as a locally closed subset of $\HH_{\gamma,\rho}$ (cf. the arguments of
  \cite{BB}, cor.\! 2.2, and combine with Proposition~\ref{p0.1}), even though
  we have not proved that $ \varphi$ extends to a morphism of representable
  functors.
\end{remark}

\section{The tangent space of $\HH_{\gamma,\rho}$.}

In this section we determine the tangent space of $\HH_{\gamma,\rho}$ at $(X)$
and we give a criterion for $ \HH_{\gamma,\rho} \cong \HH(d,p,\pi)$ to be
isomorphic as schemes at $(X)$. We end this section by considering an example.

Let $X$ be a surface in $\proj{4}$ with graded ideal $I = I_X$ and let $D =
(M_1,M_2,b)$, $M_i = H_*^i(\tilde I)$, be its ``Horrocks triple''. Recall
that $\EExt 0R1 (I,I)$ is the {\it tangent space} of
$\HH_\gamma$ at $(X)$ because a deformation in $\HH_\gamma$ keeps the
postulation constant, i.e. it corresponds precisely to a graded deformation of
$I$. Moreover there exist maps
\begin{equation*}
  \varphi_i : \EExt 0R1 (I, I) \to \HHom 0R (H_*^i(\tilde I),
  H_*^{i+1}(\tilde I)). 
\end{equation*}
taking an extension $0 \to I \to E \to I \to 0$ of $\EExt 0R1 (I, I)$ onto the
connecting homomorphism $\delta^i$ in the exact sequence
\begin{equation*}
  H_*^i(\tilde E) \to H_*^i(\tilde I) \xrightarrow { \delta^i}
  H_*^{i+1}(\tilde  I)  \to H_*^{i+1}(\tilde E) 
\end{equation*}
For graded homogeneous ideals we have $I = H_*^0(\tilde I)$, and it follows
that the composition $E \to H_*^0(\tilde E) \to H_*^0(\tilde I)$ is
surjective, i.e. we get $\varphi_0 = 0$. Moreover note that if $\delta^{i-1}$
and $\delta^i$ are both zero for some $i$, then the exact sequence $0 \to I
\to E \to I \to 0$ above defines an extension
\begin{equation*}
  0 \to H_*^i(\tilde I) \to H_*^i(\tilde E) \to H_*^i(\tilde I) \to 0
\end{equation*}
Since $M_i = H_*^i(\tilde I)$ for $i=1,2$ and $E= H_*^3(\tilde I)$, there are
well-defined morphisms $$\psi_i : \ker (\varphi_1,\varphi_2) \to \EExt 0R1
(M_i,M_i) \ \ \ \ {\rm for} \ \ \ \ {\rm i = 1,2} $$ where $
(\varphi_{1},\varphi_2) : \!\EExt 0R1 (I,I) \to \HHom 0{} (M_1, M_2) \times
\HHom 0{} (M_2, E)$ and $\varphi_i$ are defined above. Recalling $\rho =
(\rho_1,\rho_2)$ we put
\begin{equation} \label{tansp} \EExt 0R1 (I, I)_\rho := \ker
  (\varphi_1,\varphi_2).
\end{equation}
Using base change theorems, as in \cite{MDP1}, we easily show that $\ker
(\varphi_1,\varphi_2)$ is the tangent space of $\HH_{\gamma,\rho}$ at $(X)$,
i.e. we get

\begin{proposition} \label{dp3.1} $ \EExt 0R1 (I, I)_\rho$ is the tangent
  space of $\HH_{\gamma,\rho}$  at $(X)$. In particular if 
\begin{equation} \label{3hom}
  \HHom 0R (I, M_1) = 0, \quad \HHom 0R (M_1, M_2) = 0 \quad\text{and}\quad
  \HHom 0{} (M_2, E) = 0,
\end{equation}
then the tangent spaces of $\HH_{\gamma,\rho}, \HH_\gamma$ and $\HH(d,p,\pi)$
are isomorphic at $(X)$. Indeed $ \HH_\gamma \cong \HH(d,p,\pi)$ as schemes at
$(X)$, and if $\HH_{\gamma,\rho}$ is smooth at $(X)$, then $\HH_{\gamma,\rho}
\cong \HH_\gamma$ are isomorphic as schemes at $(X)$ as well.
\end{proposition}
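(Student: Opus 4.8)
The plan is to treat the three assertions of the proposition in turn, identifying the tangent space, comparing it under the hypotheses \eqref{3hom}, and finally upgrading an equality of tangent spaces together with smoothness into a scheme isomorphism. First I would prove that $\EExt 0R1(I,I)_\rho = \ker(\varphi_1,\varphi_2)$ is the tangent space of $\HH_{\gamma,\rho}$ at $(X)$. Since a first-order deformation in $\HH_\gamma$ is exactly a graded deformation of $I$, it is an extension $0 \to I \to E \to I \to 0$ representing a class in $\EExt 0R1(I,I)$, and the further condition cutting out $\HH_{\gamma,\rho}$ is that the deficiencies $\rho_i(v) = \dim_k H_*^i(\tilde I)_v$ stay constant. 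Working over $S = k[\epsilon]$ and invoking the base-change/semicontinuity machinery of \cite{MDP1}, constancy of $\rho_i$ is equivalent to $S$-flatness of $H_*^i(\tilde E)$, which by the long exact cohomology sequence of $0 \to \tilde I \to \tilde E \to \tilde I \to 0$ holds precisely when the connecting maps $\delta^{i-1}$ and $\delta^i$ vanish. As $\varphi_0 = \delta^0 = 0$ is automatic, the surviving conditions are $\varphi_1 = \delta^1 = 0$ and $\varphi_2 = \delta^2 = 0$, giving the tangent space $\ker(\varphi_1,\varphi_2)$ exactly as in \eqref{tansp}.

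Next, under \eqref{3hom} I would identify all three tangent spaces. The vanishing $\HHom 0R(M_1,M_2) = 0$ and $\HHom 0{}(M_2,E) = 0$ forces $\varphi_1 = \varphi_2 = 0$, since these maps land in the stated groups; hence $\ker(\varphi_1,\varphi_2) = \EExt 0R1(I,I)$, i.e. the tangent spaces of $\HH_{\gamma,\rho}$ and $\HH_\gamma$ coincide. For $\HH(d,p,\pi)$, the spectral sequence \eqref{mspect} (using $H_\mfm^0(I) = H_\mfm^1(I) = 0$ and $H_\mfm^2(I) \cong M_1$) gives $\EExt 0\mfm 2(I,I) \cong \HHom 0R(I,M_1)$, which vanishes by the first hypothesis; feeding this into \eqref{mseq} at $v = 0$ yields $\EExt 0R1(I,I) \cong H^0(\sN_X)$, the tangent space of $\HH(d,p,\pi)$. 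Moreover $\HHom 0R(I,M_1) = 0$ gives the scheme isomorphism $\HH_\gamma \cong \HH(d,p,\pi)$ at $(X)$ by \cite{K79}, Rem.\ 3.7, exactly as in the proof of Proposition~\ref{them}.

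Finally I would deduce the scheme isomorphism $\HH_{\gamma,\rho} \cong \HH_\gamma$ at $(X)$. Locally at $(X)$ the constant-cohomology stratum $\HH_{\gamma,\rho}$ is a closed subscheme of $\HH_\gamma$, so the inclusion corresponds to a surjection $\sO_{\HH_\gamma,(X)} \twoheadrightarrow \sO_{\HH_{\gamma,\rho},(X)}$ and induces an injection on Zariski tangent spaces; by the previous paragraph this injection is in fact an isomorphism. Now the chain $\dim_{(X)}\HH_\gamma \le \dim T_{(X)}\HH_\gamma = \dim T_{(X)}\HH_{\gamma,\rho} = \dim_{(X)}\HH_{\gamma,\rho} \le \dim_{(X)}\HH_\gamma$, where the middle equality uses the assumed smoothness of $\HH_{\gamma,\rho}$ and the last inequality uses that it is a closed subscheme, forces equality throughout. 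Hence $\HH_\gamma$ is also smooth at $(X)$ of the same dimension, and a surjection of regular local rings of equal dimension that is an isomorphism on cotangent spaces is an isomorphism; therefore $\HH_{\gamma,\rho} \cong \HH_\gamma$ as schemes at $(X)$. The main obstacle is the base-change step of the first paragraph, where one must justify carefully that constancy of the $\rho_i$ is equivalent to the vanishing of the connecting homomorphisms $\varphi_i$; the remaining arguments are then formal consequences of this identification together with \eqref{mseq}, \eqref{mspect} and \cite{K79}.
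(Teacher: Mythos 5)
Your proof is correct and follows essentially the same route as the paper: the tangent-space identification of $\EExt 0R1 (I,I)_\rho$ via base change as in \cite{MDP1}, the use of \eqref{mspect} and \eqref{mseq} together with \cite{K79} under the hypotheses \eqref{3hom}, and the observation that the locally closed immersion $\HH_{\gamma,\rho} \hookrightarrow \HH_\gamma$ with bijective tangent map and smooth source must be a local isomorphism. The only cosmetic difference is in the last step, where the paper says ``smooth, hence \'etale, hence an isomorphism since universally injective'' while you run the equivalent and slightly more explicit regular-local-ring dimension count; both arguments carry the same content.
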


\begin{proof} As earlier remarked, $\EExt 0R1 (I,I) \cong \EExt {}{}1
  (\sI_X,\sI_X) \cong H^{0}(\sN_X)$ provided $ \HHom 0R (I, M_1) = 0$.
  Moreover $ \EExt 0R1 (I, I)_\rho \cong \EExt 0R1 (I,I)$ since $\varphi_i=0$
  for $i=1,2$.

  For the isomorphism as schemes we remark that $\HH_\gamma \simeq
  \HH(d,p,\pi)$ is proven in \cite{K79}, Thm.\! 3.6 and Rem. 3.7 (see also
  \cite{W1}). Finally if $\HH_{\gamma,\rho}$ is smooth at $(X)$, then the
  embedding $\HH_{\gamma,\rho} \hookrightarrow \HH_\gamma$ is smooth at $(X)$
  (since the tangent map is surjective), hence etale, hence an isomorphism at
  $(X)$ since the embedding is universally injective.
\end{proof}

\begin{remark} \label{t3.7} If we suppose \eqref{3hom}, then
  $\HH_{\gamma,\rho} \cong \HH_\gamma$ are isomorphic as schemes at $(X)$ by
  \cite{K6}, Thm.\! 3.7 without requiring the smoothness of
  $\HH_{\gamma,\rho}$ at $(X)$. See also Remark~\ref{seminat}.
\end{remark}

In \cite{K6} we also gave almost complete proofs of Remark~\ref{t3.7} and of
the following two non-trivial results (cf. \cite{K6}, Prop.\! 3.4 and Prop.\!
3.6). Note that Remark~\ref{p3.4} generalizes Corollary~\ref{p1.3}.
\begin{remark} \label{p3.4} Let $X$ be a surface in $\proj{4}$. Then
  there exists morphisms $e_i : \EExt 0R1 (M_i, M_i) \to \EExt 0R3 (M_2, M_1)$
  for $i=1,2$ and an induced morphism
\begin{equation*}
  \bar e_1 : \EExt 0R1 (M_1, M_1) \to \EExt 0R3 (M_2, M_1)/e_2(\EExt 0R1 (M_2,
  M_2))
\end{equation*}
such that if the local deformation functors $Def(M_i)$ of $M_i$ are formally
smooth (for instance if $\EExt 0R2 (M_i, M_i) = 0$) for $i = 1,2$, and if the
morphism $\bar e_1$ is surjective, then $\VV_\rho$ is smooth at $D =
(M_1,M_2,b)$ (i.e. the local deformation functor of $D$ is formally smooth).
\end{remark}
\begin{remark} \label{p3.6} Let $X$ be a surface in $\proj{4}$ and let
  $\epsilon = \dim \coker \bar e_1$. Then $\dim \EExt 0R1 (I, I)_\rho =$
\begin{gather*} \label{} 1+ \delta ^{3} (-5)+ \sum_{ i = 0 }^{3} (-1) ^{i} ~
  {_0\!\ext_R^i} (M_2,M_1)- \sum_{i= 0 }^{1} (-1) ^{i} ~ {_0\!\ext_R^i}
  (M_1,M_1)- \sum_{i= 0 }^{1} (-1) ^{i} ~ {_0\!\ext_R^i} (M_2,M_2) +\epsilon.
\end{gather*}
\end{remark}

To illustrate the results we have proved, we consider an example of a surface
$X$ of $\proj{4}$ where actually $\VV_\rho$ is smooth and non-trivial at the
corresponding $(M_1,M_2,b)$, cf.\! Corollary~\ref{p1.3}. Moreover all
conditions of Proposition~\ref{dp3.1} are satisfied, and it follows that
$\HH_{\gamma,\rho}$ and $\HH(d,p,\pi)$ are isomorphic and smooth at $(X)$.

\begin{example} \label{e3.10new} Let $X$ be the smooth elliptic surface with
  invariants $d = 11$, $\pi = 12$ and $K^2 = -4$ (cf. \cite{P} or \cite{DES},
  B7.6). Then the graded modules $M_i \simeq \oplus H^i(\sI_X(v))$ for $i=1,2$
  vanish for every $v$ except in  the following cases
\begin{align*}
  h^1 (\sI_X(3)) = 1, \quad h^2 (\sI_X(1)) = 2, \quad  h^2 (\sI_X(2)) = 1.
\end{align*}
Moreover $I = I_X$ admits a minimal resolution (cf. \cite{DES})
\begin{equation*}
  0 \to R(-8) \to  R(-7)^{\oplus 6} \to R(-6)^{\oplus 13} \to R(-5)^{\oplus 8}
  \oplus   R(-4)^{} \to I \to 0 .
\end{equation*}
It follows that $\EExt 0{}i (M_j, M_j) = 0$ for $i \ge 2$ and $j = 1,2$ and
that $\EExt 0{}3 (M_2, M_1) = 0$. By Corollary~\ref{p1.3} and
Proposition~\ref{dp3.1} we get that $\HH(d,p,\pi) \cong \HH_{\gamma,\rho}$ is
smooth at $(X)$. If we, however, want to compute the dimension of
$\HH(d,p,\pi)$ at $(X)$ and will avoid Remark~\ref{p3.6} which we have not
proved, we still have to use the results of section 4. Let us only use the two
``most general'' results there, Proposition~\ref{dp0.3} and
Propositions~\ref{thmeulerIX}, to illustrate the principle of semicontinuity a
little extended (to include the semicontinuity of the graded Betti numbers).
Let $V$ be the generically smooth component of $ \HH(d,p,\pi)$ to which $(X)$
belongs. Since $\HH(d,p,\pi) \cong \HH_{\gamma,\rho}$ at $(X)$, then a generic
surface $\tilde X$ of $V$ also belongs to $\HH_{\gamma,\rho}$. Inside
$\HH_{\gamma}$, hence inside $\HH_{\gamma,\rho}$, the graded Betti numbers of
the homogeneous ideal of the surfaces obey semicontinuity by Remark 7(b) of
\cite{K07}!! Since we from the minimal resolution of $I_X$ can see that, for
every $i$, $\beta_{j,i} \ne 0$ for at most one $j$ and since the Hilbert
functions of $X$ and $\tilde X$ are the same, they have exactly the same
graded Betti numbers. Moreover note that $ h^i (\sI_{\tilde X}(v))= h^i
(\sI_X(v))$ for any $i,v$ since $X$ has seminatural
cohomology. 
It follows that \ $$\dim V = 1+ \delta ^{3} (-5)- \delta ^{3} (-5)+ \delta
^{1}(-5)=$$
 $$1+ h^3
 (\sI_X(-1)) + 8h^3(\sI_X)+13 h^2 (\sI_X(1))-6 h^2 (\sI_X(2))-
 h^1(\sI_X(3))=50.$$ Since we have proved $\dim V = 1+ \delta ^{3} (-5)-
 \delta ^{3} (-5)+ \delta ^{1}(-5)$ it is easier to use
 Proposition~\ref{dp0.3} to get
\begin{equation*}
  \dim V = \chi
  (\sN_X)-\delta^3(0)+\delta^2(0)-\delta^1(0) = 5(2d+\pi -1)-d^2+2\chi (\sO_X)
  = 50
\end{equation*}
because $\delta^i(0)$ for $i > 0$ is easily seen to be zero.
\end{example}

\section{Linkage of surfaces.}

The main result of this section shows how to compute the dimension of
$\HH_{\gamma, \rho}$ and the dimension of its tangent space at $(X)$ provided
we know how to solve the corresponding problem for a linked surface $X'$
(Theorem~\ref{t4.1}). In another related result (Proposition~\ref{vanii}) we
give conditions on e.g. a
generic surface of $\HH(d,p,\pi)$ 
such that corresponding linked surface is non-generic in the sense
$\dim_{(X')} \HH_{\gamma',\rho'} < \dim_{(X')} \HH (d',p',\pi')$. In this case
a new surface, the generic one with ``smaller'' cohomology, has to exist. In
proving the results of this section we substantially need the theory of
linkage of families developed in \cite{K3}.

Since the main even liaison result of this paper, which we prove in the final
section, requires that the linkage theorem of this section is proven for
equidimensional locally Cohen-Macaulay (lCM) codimension 2 subschemes of
$\proj{n+2}$, we prove Theorem~\ref{t4.1} in this generality. The other
results and examples of this section deal, however, with surfaces.

Now, if the surfaces $X$ and $X'$ are (algebraically) linked by a complete
intersection (a CI) $Y$ of type $(f,g)$, then the dualizing sheaf $\omega_{X'}$
satisfies $\omega_{ X'} = \sI_{X/Y}(f+g-5)$ where $\sI_{X/Y} = \ker (\sO_Y \to
\sO_X)$ (\cite{PS}, \cite{MIG}). Moreover $\omega_X = \sI_{X'/Y}(f+g-5)$ and
we get
\begin{equation} \label{29}
\begin{gathered}
  \chi (\sO_X(v)) + \chi (\sO_{X'}(f+g-5-v)) = \chi (\sO_Y(v)) \\[3pt]
  \begin{aligned}
    h^i(\sI_{X'}(v)) & = h^{3-i}(\sI_X(f+g-5-v)), & \text{for $i=1$ and 2} \\
    h^i(\sI_{X'/Y}(v)) & = h^{2-i}(\sO_X(f+g-5-v)), & \text{for $i=0$ and 2} \\
    h^i(\sO_{X'}(v)) & = h^{2-i}(\sI_{X/Y}(f+g-5-v)), & \text{for $i=0$ and 2}
  \end{aligned}
\end{gathered}
\end{equation}
from which we deduce $d + d' = fg$ and $\pi'- \pi = (d'- d)(f+g-4)/2$. 

The generalization of \eqref{29} to equidimensional lCM codimension 2
subschemes of $\proj{n+2}$ is clear, e.g. we have 
\begin{equation} \label{29codim2} h^i(\sI_{X'/Y}(v)) =
  h^{n-i}(\sO_X(f+g-n-3-v)), \quad \text{for $i=0$ and} \ n.
\end{equation} 
Note that we now have $n$ deficiency modules, whose dimensions $\rho_i(v)=
h^i(\sI_{X}(v))$, $i=1,2,...,n$ determine the vector function
$\rho=(\rho_1,...,\rho_n)$. Using this vector function, we easily generalize
\eqref{tansp} in such a way that we get the tangent space $\EExt 0R1 (I_X,
I_X)_\rho$ of the Hilbert scheme $\HH_{\gamma,\rho} \subseteq {\rm
  Hilb}^{p(v)}( \proj{n+2})$ of constant cohomology in this case. 
We allow $n=0$ in which case there is no $\rho$ and $\HH_{\gamma,\rho}
\subseteq {\rm Hilb}^{p(v)}( \proj{2})$ should be taken as the Hilbert scheme
of constant postulation (``the postulation Hilbert scheme'') and $\EExt 0R1
(I_X, I_X)_\rho$ as $\EExt 0R1 (I_X, I_X)$. We have (cf.\! \cite{MDP1} for the
curve case of the theorem),
    
\begin{theorem} \label{t4.1} Let $X$ and $X'$ be two equidimensional locally
  Cohen-Macaulay codimension 2 subschemes of $\proj{n+2}$, linked by a
  complete intersection $Y \subseteq \proj{n+2}$ of type $(f,g)$, and suppose
  that $(X)$ (resp. $(X')$) belongs to the Hilbert scheme
  $\HH_{\gamma,\rho}$ (resp. $\HH_{\gamma',\rho'}$) of constant
  cohomology.  Then  \\[-2mm]
	
  {\rm i)} $\dim_{(X)} \HH_{\gamma,\rho} + h^0(\sI_X(f)) + h^0(\sI_X(g)) =
  \dim_{(X')} \HH_{\gamma',\rho'} + h^0(\sI_{X'}(f))+ h^0(\sI_{X'}(g))$ or
  equivalently, \\[-3mm]

  $\dim_{(X')} \HH_{\gamma',\rho'} = \dim_{(X)} \HH_{\gamma,\rho} +
  h^0(\sI_{X/Y}(f))
  + h^0(\sI_{X/Y}(g)) - h^n(\sO_X(f-n-3)) - h^n(\sO_X(g-n-3)).$ \\[-2mm]

  {\rm ii)} The dimension formulas of i) remain true if we replace $\dim_{(X)}
  \HH_{\gamma, \rho}$ and $\dim_{(X')} \HH_{\gamma',\rho'}$ by the dimension
  of their tangent spaces $\EExt 0R1 (I_X, I_X)_\rho$ and $\EExt 0R1 (I_{X'},
  I_{X'})_{\rho '}$ respectively.  \\[-2mm]
	
 {\rm iii)} $\HH_{\gamma,\rho}$ is smooth at $(X)$ if and only if
$\HH_{\gamma',\rho'}$ is smooth at $(X')$
\end{theorem}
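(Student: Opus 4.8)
The plan is to obtain iii) as a formal consequence of parts i) and ii), using the standard characterization of smoothness at a closed point. Since $\HH_{\gamma,\rho}$ is of finite type over the algebraically closed field $k$ and $(X)$ is a $k$-rational point, its local ring $\sO_{\HH_{\gamma,\rho},(X)}$ satisfies $\dim_{(X)}\HH_{\gamma,\rho}\le \dim_k T_{(X)}$, where $T_{(X)}=\EExt 0R1 (I_X, I_X)_\rho$ is the Zariski tangent space (Proposition~\ref{dp3.1} and its codimension $2$ analogue); moreover equality holds precisely when this local ring is regular, i.e. when $\HH_{\gamma,\rho}$ is smooth at $(X)$. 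The same applies at $(X')$ with $T_{(X')}=\EExt 0R1 (I_{X'}, I_{X'})_{\rho'}$. Thus I would reformulate smoothness at $(X)$ (resp. $(X')$) as the vanishing of the non-negative integer $\dim_k T_{(X)}-\dim_{(X)}\HH_{\gamma,\rho}$ (resp. $\dim_k T_{(X')}-\dim_{(X')}\HH_{\gamma',\rho'}$).

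Next I would subtract the two dimension identities supplied by the theorem. Abbreviating $A=h^0(\sI_X(f))+h^0(\sI_X(g))$ and $B=h^0(\sI_{X'}(f))+h^0(\sI_{X'}(g))$, part i) reads $\dim_{(X)}\HH_{\gamma,\rho}+A=\dim_{(X')}\HH_{\gamma',\rho'}+B$, and part ii) is the same identity with $\dim_{(X)}\HH_{\gamma,\rho}$ and $\dim_{(X')}\HH_{\gamma',\rho'}$ replaced by $\dim_k T_{(X)}$ and $\dim_k T_{(X')}$. The key point is that the correction terms $A$ and $B$ are literally the same in both identities, so subtracting i) from ii) makes them cancel, leaving
\begin{equation*}
  \dim_k T_{(X)}-\dim_{(X)}\HH_{\gamma,\rho}
  =\dim_k T_{(X')}-\dim_{(X')}\HH_{\gamma',\rho'}.
\end{equation*}

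Finally, both sides of this equality are non-negative and, by the first paragraph, each vanishes exactly when the corresponding Hilbert scheme is smooth at the corresponding point; hence the left side is zero if and only if the right side is, which is precisely assertion iii). Given i) and ii) this argument is entirely formal, so the genuine content — and the step I expect to be the main obstacle — lies in ii): showing that the \emph{tangent spaces} obey the same linkage relation, with the same correction terms $A$ and $B$, as the local dimensions. I anticipate this comes from realizing $\HH_{\gamma,\rho}$ and $\HH_{\gamma',\rho'}$ locally as smooth projections from a common incidence scheme of flags $X\subseteq Y$ (resp. $X'\subseteq Y$) with fibre dimensions $A$ (resp. $B$), linked by the residuation isomorphism; the delicate point is to verify that these identifications induce the asserted shift on Zariski tangent spaces, and not merely on dimensions, which is exactly where the linkage-of-families machinery of \cite{K3} is needed.
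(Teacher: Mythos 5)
Your formal derivation of iii) from i) and ii) is sound: at the $k$-rational point $(X)$ the local dimension of the finite-type $k$-scheme $\HH_{\gamma,\rho}$ is bounded above by the dimension of its tangent space $\EExt 0R1 (I_X, I_X)_\rho$, with equality exactly at smooth points, and subtracting i) from ii) does cancel the correction terms and identify the two non-negative defects. This is even a slightly different route to iii) than the paper's, which gets iii) directly from the geometry (smoothness ascends and descends along the smooth projections it constructs); your reduction is valid, given i) and ii).

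The genuine gap is that i) and ii) are never proved, and they are the substance of the theorem: you call them ``the two dimension identities supplied by the theorem'' and establish only the implication from them to iii). Your closing paragraph correctly anticipates the mechanism --- and it is exactly the paper's: the flag scheme $D(p(v);f,g)$ of pairs $X \subseteq Y$, the isomorphism $D(p(v);f,g)\simeq D(p'(v);f,g)$, $(X,Y)\mapsto (X',Y)$, of \cite{K3}, Thm.~2.6, and the first projection $p$ to ${\rm Hilb}^{p(v)}(\proj{n+2})$ --- but it stops at an anticipation, and the two decisive verifications are missing. First, \cite{K3}, Thm.~1.16~(b) gives smoothness of $p$ at $(X,Y)$ only under the vanishing $H^1(\sI_X(f))=H^1(\sI_X(g))=0$, which is \emph{not} a hypothesis of the theorem; the paper instead restricts $p$ to $p^{-1}(\HH_{\gamma,\rho})\to \HH_{\gamma,\rho}$ and invokes \cite{K3}, Rem.~1.20--1.21: along the constant-cohomology stratum the relevant twisted ideal sheaves are locally free and commute with base change, and this is what makes the restricted projection smooth. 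That step is precisely where the hypothesis $(X)\in\HH_{\gamma,\rho}$, $(X')\in\HH_{\gamma',\rho'}$ enters; without it there is no smooth projection to speak of. Second, the fibre dimension $h^0(\sI_{X/Y}(f))+h^0(\sI_{X/Y}(g))$ from \cite{K3}, Thm.~1.16~(a) must be converted into your $A$ and $B$ and into the second formula of i) via $h^0(\sI_{X/Y}(v))=h^0(\sI_X(v))-h^0(\sI_Y(v))$ and the liaison duality $h^0(\sI_{X'/Y}(v))=h^{n}(\sO_X(f+g-n-3-v))$ of \eqref{29codim2}; this bookkeeping, which is what makes the correction terms in i) and ii) ``literally the same,'' is asserted rather than checked. (The case $n=0$, where there is no $\rho$, also needs the separate reference \cite{K98}, Prop.~1.7.) As written, the proposal proves only that i) and ii) together imply iii).
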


\begin{proof} Let $D(p(v); f,g)$ be 
  the Hilbert flag scheme parametrizing of pairs $(X,Y)$ of equidimensional
  lCM codimension 2 subschemes of $\proj{n+2}$ such that $Y$ is a CI of type
  $(f,g)$ containing $X$. By \cite{K3}, Thm.\! 2.6, there is an isomorphism of
  schemes,
  \begin{equation} \label{30} D(p(v); f,g) \simeq D(p'(v); f,g),
\end{equation}
given by sending $(X,Y)$ onto $(X',Y)$ where $X'$ is linked to $X$ by $Y$. We
may suppose $n \ge 1$ in Theorem~\ref{t4.1} since the case $n=0$ is completely
solved by Prop.\! 1.7 of \cite{K98}. Then the projection morphism $p : D(p(v);
f,g) \to {\rm Hilb}^{p(v)}( \proj{n+2})$, given by $(X,Y) \mapsto (X)$, is
smooth at $(X,Y)$ provided $H^1(\sI_X(f)) = H^1(\sI_X(g)) = 0$ (\cite{K3},
Thm.\! 1.16 (b)). By \cite{K3}, Rem.\! 1.20, see also \cite{K3}, Rem.\! 1.21,
this smoothness holds if we replace the vanishing above with the claim that
the corresponding twisted ideal sheaves on $ {\rm Hilb}^{p(v)}( \proj{n+2})$
are locally free and commute with base change. Hence the following restriction
of $p$ to $p^{-1} (\HH_{\gamma,\rho})$, $p^{-1} (\HH_{\gamma,\rho}) \to
\HH_{\gamma,\rho}$, is smooth, (or see \cite{MDP1} for related arguments).
Since the fiber dimension of $p$ at $(X,Y)$ is precisely $$h^0(\sI_{X/Y}(f)) +
h^0(\sI_{X/Y}(g)) = h^0(\sI_X(f)) + h^0(\sI_X(g)) - h^0(\sI_Y(f)) -
h^0(\sI_Y(g))$$ by \cite{K3}, Thm.\! 1.16 (a), we get easily any conclusion of
the theorem if we combine with \eqref{29codim2}.
\end{proof}

\begin{remark} \label{r4.2} Let $X$ and $X'$ be two surfaces in $\proj{4}$,
  linked by a CI of type $(f,g)$. Then the arguments of the proof above show
  that we can, under the assumptions
\begin{equation} \label{van}
  H^1(\sI_X(f)) = H^1(\sI_X(g)) = 0 \ \ \ {\rm and}  \ \ \ H^1(\sI_{X'}(f)) =
  H^1(\sI_{X'}(g)) = 0 
\end{equation}
replace $\HH_{\gamma,\rho}$ and $\HH_{\gamma',\rho'}$ in Theorem~\ref{t4.1}
(i) (resp. their tangent spaces in Theorem~\ref{t4.1} (ii) ) by $\HH(d,p,\pi)$
and $\HH(d',p',\pi')$ (resp. by $H^1(\sN_X)$ and $H^1(\sN_{X'})$) and get
valid dimension formulas involving the whole Hilbert schemes (resp. their
tangent spaces). Hence assuming \eqref{van}, it follows that $X$ is
unobstructed if and only if $X'$ is unobstructed, see \cite{K3}, Prop.\! 3.12
for a generalization.
\end{remark}

\begin{example} \label{e4.3} Let $X$ be the smooth rational surface of
  $\HH(11,0,11)$ of Example~\ref{e3.10}, let $Y$ be a CI of type $(5,5)$
  containing $X$, and let $X'$ be the linked surface. Using \eqref{29} we
  deduce $\chi (\sO_{X'} (v)) = 7v^2-12v+9$ from $\chi (\sO_X(v)) =
  (11v^2-9v+2)/2$, i.e. $(X')$ belongs to $\HH(d',p',\pi') = \HH(14,8,20)$ by
  \eqref{hilbpolyX}. Moreover $\omega_{ X'} = \sI_{X/Y}(5)$ is globally
  generated (cf. the resolution of $I$ of Example~\ref{e3.10}) and the graded
  modules $M_i' \simeq \oplus H^i(\sI_{X'}(v))$ are supported at two
  consecutive degrees and satisfy
\begin{align*}
  \dim H^1 (\sI_{X'}(3)) & = 1, & \dim H^2 (\sI_{X'}(1)) & = 1, \\
  \dim H^1 (\sI_{X'}(4)) & = 3, & \dim H^2 (\sI_{X'}(2)) & = 2.
\end{align*}
From these informations we find the minimal resolution of $I' = I_{X'}$ to be
\begin{equation*}
  0 \to R(-9)^{\oplus 3} \to R(-8)^{\oplus 14} \to R(-7)^{\oplus 23} \to
  R(-6)^{\oplus 11} \oplus R(-5)^{\oplus 2} \to I' \to 0.
\end{equation*}
Combining Example~\ref{e3.10} and Remark~\ref{t3.7} we see that
$\HH_{\gamma,\rho}$ is smooth at $(X)$ and $ \dim_{(X)} \HH_{\gamma,\rho}=41$.
Thanks to Theorem~\ref{t4.1}, we get that $\HH_{\gamma',\rho'}$ is smooth at
$(X')$ and that
\begin{equation*}
  \dim_{(X')} \HH
_{\gamma',\rho'} = \dim_{(X)} \HH_{\gamma,\rho} +
  2h^0(\sI_{X/Y}(5)) -  2h^2(\sO_X(0)) = 57 .
\end{equation*}
Moreover by Remark~\ref{r4.2} or Proposition~\ref{dp3.1}, \
$\HH(d',p',\pi') \simeq \HH_{\gamma',\rho'}$  is smooth at $(X')$ and 
$\dim_{(X')} \HH(d',p',\pi') = 57$. Note that in this case we neither have
$\EExt 0{}3 (M_2, M_1) = 0$ nor $\HHom {-5}R (I, M_2) = 0$, i.e. we can not
use Corollary~\ref{p1.3} or Proposition~\ref{them} to conclude that
$\HH_{\gamma',\rho'}$ is smooth at $(X')$. But, as we have seen, the linkage
result above takes care of the smoothness and the dimension.
\end{example}
If a surface $X$ of $\proj{4}$ is contained in a CI $Y$ of type $(f,g)$, then
there is an inclusion map $I_Y \to I_X$ which induces a morphism
$l_{X/Y}^{i+1}: H^i(\sN_{X}) \to H^i(\sO_{X}(f)) \oplus H^i(\sO_{X}(g)) $ for
every $i$. We let $\beta_{X/Y}$ be the composition of $l_{X/Y}^1$ with the
natural map $H^0(\sO_{X}(f)) \oplus H^0(\sO_{X}(g)) \to H^1(\sI_{X}(f)) \oplus
H^1(\sI_{X}(g))$.
\begin{proposition} \label{vanii} Let $X$ and $X'$ be surfaces in $\proj{4}$,
  geometrically linked by a complete intersection $Y \subseteq \proj{4}$ of
  type $(f,g)$, let $(X) \in \HH_{\gamma,\rho}$ and $(X') \in
  \HH_{\gamma',\rho'}$ and suppose $\dim_{(X)} \HH_{\gamma,\rho} =
  \dim_{(X)}\HH(d,p,\pi)$. Let $c := \dim_{(X')} \HH (d',p',\pi') -
  \dim_{(X')} \HH _{\gamma',\rho'}$ and suppose $ H^1(\sI_X(f)) =
  H^1(\sI_X(g)) = 0 $ and that $l_{X/Y}^2$ is injective. Then
\begin{equation} \label{31*}
  h^1(\sI_{X'}(f)) + h^1(\sI_{X'}(g)) - h^2(\sI_{X'}(f)) -
  h^2(\sI_{X'}(g))  \le c \le  h^1(\sI_{X'}(f)) + h^1(\sI_{X'}(g))
\end{equation} 
and we have equality on the right hand side if and only if $\HH(d',p',\pi')$
is smooth at $(X')$. Furthermore, if $h^1(\sI_{X'}(v)) \cdot h^2(\sI_{X'}(v))
= 0$ for $v = f$ and $v = g$, then 
$c = h^1(\sI_{X'}(f))+h^1(\sI_{X'}(g))$.
\end{proposition}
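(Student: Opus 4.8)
The plan is to run everything on the Hilbert flag scheme $D:=D(p(v);f,g)$ and its isomorphic partner $D(p'(v);f,g)$ produced in the proof of Theorem~\ref{t4.1}, and to play the two projections $p:(X,Y)\mapsto(X)$ and $p':(X,Y)\mapsto(X')$ against each other. The key asymmetry is that $H^1(\sI_X(f))=H^1(\sI_X(g))=0$ makes $p$ smooth at $(X,Y)$, whereas $p'$ need not be smooth at $(X',Y)$, and the defect $c$ is exactly what we must measure. First I would record, from the smoothness of $p$, the genericity hypothesis $\dim_{(X)}\HH_{\gamma,\rho}=\dim_{(X)}\HH(d,p,\pi)$, and the (always smooth) restricted projection $p'^{-1}(\HH_{\gamma',\rho'})\to\HH_{\gamma',\rho'}$, that the constant-cohomology flag locus is top-dimensional in $D$; transporting this through the isomorphism $D\simeq D(p'(v);f,g)$ gives
\[
\dim_{(X',Y)}D(p'(v);f,g)=\dim_{(X')}\HH_{\gamma',\rho'}+h^0(\sI_{X'/Y}(f))+h^0(\sI_{X'/Y}(g)).
\]

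Next I would analyse the differential and the obstructions of $p'$ at $(X',Y)$. A first-order deformation $\xi\in H^0(\sN_{X'})=T_{(X')}\HH(d',p',\pi')$ lifts to the flag precisely when the two forms cutting out $Y$ survive, and the obstruction to this is $\beta_{X'/Y}(\xi)$; hence $\im(dp')=\ker\beta_{X'/Y}$, the fibre of $p'$ (complete intersections of type $(f,g)$ through $X'$) is smooth of dimension $h^0(\sI_{X'/Y}(f))+h^0(\sI_{X'/Y}(g))$, and the relative obstructions of $p'$ live in $H^1(\sI_{X'}(f))\oplus H^1(\sI_{X'}(g))$. Feeding these into the tangent--obstruction sequence of the morphism $p'$, and using that the obstruction space of the flag is carried isomorphically onto that of $D$ (where, by the vanishing on the $X$-side, it is identified with $H^1(\sN_X)$), I obtain two numerical identities: one comparing $h^0(\sN_{X'})$ with $\dim_{(X')}\HH_{\gamma',\rho'}$ and $\dim\im\beta_{X'/Y}$, and one comparing $h^1(\sN_{X'})$, $h^1(\sN_X)$ and $\dim\im\beta_{X'/Y}$.

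The decisive input is the injectivity of $l_{X/Y}^2$. Via Serre duality on the Gorenstein surface $Y$ (with $\omega_Y=\sO_Y(f+g-5)$) together with the liaison identities \eqref{29}, I would convert \emph{$l_{X/Y}^2$ injective} on the $X$-side into surjectivity of $l_{X'/Y}^1$, whence $\beta_{X'/Y}$ is surjective and $\dim\im\beta_{X'/Y}=h^1(\sI_{X'}(f))+h^1(\sI_{X'}(g))$; the same duality is what controls $h^1(\sN_{X'})$ by $h^2(\sI_{X'}(f))+h^2(\sI_{X'}(g))$. The upper bound $c\le h^1(\sI_{X'}(f))+h^1(\sI_{X'}(g))$ then comes from the tangent-space estimate $\dim_{(X')}\HH(d',p',\pi')\le h^0(\sN_{X'})$, and the lower bound from the standard estimate $\dim_{(X')}\HH(d',p',\pi')\ge h^0(\sN_{X'})-h^1(\sN_{X'})$; equality on the right is exactly $\dim_{(X')}\HH(d',p',\pi')=\dim T_{(X')}\HH(d',p',\pi')$, i.e.\ smoothness of $\HH(d',p',\pi')$ at $(X')$. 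When moreover $h^1(\sI_{X'}(v))\,h^2(\sI_{X'}(v))=0$ for $v=f,g$ the two bounds collapse onto the right-hand value, forcing $c=h^1(\sI_{X'}(f))+h^1(\sI_{X'}(g))$.

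The hard part will be the middle step: making the obstruction-theoretic comparison of $p$ and $p'$ across the isomorphism $D\simeq D(p'(v);f,g)$ fully rigorous -- in particular pinning down that the flag obstruction space maps isomorphically to $H^1(\sN_X)$ and tracking the induced map to $H^1(\sN_{X'})$ -- and then dualising $l_{X/Y}^2$ on $Y$ so that the $X$-side injectivity yields precisely the $X'$-side surjectivity of $\beta_{X'/Y}$ and the bound on $h^1(\sN_{X'})$ by $h^2(\sI_{X'}(f))+h^2(\sI_{X'}(g))$. Here the cohomology groups on the two sides of the liaison are genuinely different sheaves, so the matching of $\dim\im\beta_{X'/Y}$ and of $h^1(\sN_{X'})$ with the stated $X'$-invariants is exactly where the proof has to be done carefully; I expect to lean throughout on the family version of liaison developed in \cite{K3} for the flag-scheme deformation theory, and on $l^2_{X/Y}$ being injective to guarantee that the flag (equivalently $\HH(d',p',\pi')$) is unobstructed enough for the tangent-space bound to be attained.
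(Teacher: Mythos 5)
Your framework is the paper's own: the flag scheme $D(p(v);f,g)\simeq D(p'(v);f,g)$, the two projections, and the transfer of the hypotheses $\dim_{(X)}\HH_{\gamma,\rho}=\dim_{(X)}\HH(d,p,\pi)$ and $H^1(\sI_X(f))=H^1(\sI_X(g))=0$ across the isomorphism to conclude that $p'^{-1}(\HH_{\gamma',\rho'})$ is top-dimensional in $D(p'(v);f,g)$ (the paper phrases this as $(f,g)$-maximality and then quotes \cite{K3}, Thm.\ 1.27), and your numerical bookkeeping on top of it is the right one. But two steps are genuinely broken. The decisive one is the claim that $\beta_{X'/Y}$ is surjective and that $h^1(\sN_{X'})\le h^2(\sI_{X'}(f))+h^2(\sI_{X'}(g))$ (equivalently, $l^2_{X'/Y}$ injective): together these say exactly that the obstruction space $A^2(X'\subseteq Y)$ vanishes, and neither of your proposed routes delivers this. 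The isomorphism $D\simeq D(p'(v);f,g)$ transfers smoothness of local rings but not the cohomologically defined obstruction spaces, so ``the obstruction space of the flag is carried isomorphically'' is unjustified -- and smoothness of $D(p'(v);f,g)$ at $(X',Y)$ by itself does not force $\beta_{X'/Y}$ to be surjective. Likewise the Serre duality you invoke does not exist in the form you need: $\sN_X$ and $\sN_{X'}$ live on different surfaces and are not dual to each other, and the liaison identities \eqref{29} only dualize $\sI$-, $\sO$- and $\omega$-cohomology, so ``$l^2_{X/Y}$ injective $\Rightarrow$ $l^1_{X'/Y}$ surjective'' is not a duality statement you can expect to prove directly. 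What actually carries the vanishing across is the hypothesis that the linkage is \emph{geometric}, which your proposal never uses: the paper quotes \cite{K3}, Cor.\ 2.14, asserting precisely that for geometrically linked surfaces $A^2(X\subseteq Y)=0$ (which your hypotheses do give, via (1.11) of \cite{K3}) implies $A^2(X'\subseteq Y)=0$. Without this input, the identification $\dim_{(X')}\HH_{\gamma',\rho'}=h^0(\sN_{X'})-h^1(\sI_{X'}(f))-h^1(\sI_{X'}(g))$ is unavailable, and with it both the lower bound in \eqref{31*} and the ``smooth $\Rightarrow$ equality'' direction; only the upper bound $c\le\dim\im\beta_{X'/Y}\le h^1(\sI_{X'}(f))+h^1(\sI_{X'}(g))$ survives.

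Second, your argument for the last sentence is wrong. The condition $h^1(\sI_{X'}(v))\cdot h^2(\sI_{X'}(v))=0$ for $v=f,g$ does \emph{not} make the two bounds of \eqref{31*} coincide: it permits, say, $h^1(\sI_{X'}(f))=0$ with $h^2(\sI_{X'}(f))\neq 0$, in which case the interval between the bounds has positive length. The correct mechanism (the paper's, via \cite{K3}, Cor.\ 1.29) is that this twist-by-twist condition forces $\HH(d',p',\pi')$ to be \emph{smooth} at $(X')$: the transverse first-order directions lying over $H^1(\sI_{X'}(v))$ meet their obstructions inside $H^2(\sI_{X'}(v))$ of the \emph{same} twist $v$, so for each twist one of the two vanishings kills either the directions or their obstructions. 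Once smoothness is known, $c=h^1(\sI_{X'}(f))+h^1(\sI_{X'}(g))$ follows from your (correct) equality-iff-smooth clause -- not from a collapse of the interval.
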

\begin{proof} The vanishing of the obstruction group, $A^2(X \subseteq Y)$, of
  the Hilbert flag scheme $D(p(v); f,g)$ at $(X,Y)$ is equivalent to
  $\beta_{X/Y}$ being surjective and $l_{X/Y}^2$ being injective by (1.11) of
  \cite{K3}, so $A^2(X \subseteq Y)=0$ by assumption. Moreover since the
  linkage is geometric, we get $A^2(X' \subseteq Y)=0$ by Cor.\! 2.14 of
  \cite{K3}, i.e. $\beta_{X'/Y}$ is surjective, $l_{X'/Y}^2$ is injective and
  $D(p'(v); f,g)$ is smooth at $(X',Y)$. Hence \cite{K3}, Thm.\! 1.27 applies
  (onto a component $V$ satisfying $\dim V= \dim_{(X')} \HH (d',p',\pi')$) to
  get the bounds of the codimension $c$ above provided we can show that
  $\HH_{\gamma',\rho'}$ is, locally at $(X')$, an $(f,g)$-maximal subset of
  $\HH(d',p',\pi')$. By the proof of Theorem~\ref{t4.1} we see that the
  restriction of the first projection $p'$ to $p'^{-1} (\HH_{\gamma',\rho'})$,
  $p'^{-1} (\HH_{\gamma',\rho'}) \to \HH_{\gamma',\rho'}$, is smooth. It
  follows that $\HH_{\gamma',\rho'}$ is
  $(f,g)$-maximal provided we can show
  \begin{equation*} \label{dimeq} 
    \dim_{(X',Y)} p'^{-1} (\HH_{\gamma',\rho'}) =
    \dim_{(X',Y)} D(p'(v); f,g) . 
  \end{equation*} 
  Thanks to \eqref{30} it suffices to show $ \dim_{(X,Y)} p^{-1}
  (\HH_{\gamma,\rho}) = \dim_{(X,Y)} D(p(v); f,g) $ which readily follows from
  the assumptions $\dim_{(X)} \HH_{\gamma,\rho} = \dim_{(X)}\HH(d,p,\pi)$ and
  $ H^1(\sI_X(f)) = H^1(\sI_X(g)) = 0 $ because the first projection, $p :
  D(p(v); f,g) \to {\rm Hilb}^{p(v)}( \proj{4})$ and its restriction to
  $p^{-1} (\HH_{\gamma,\rho})$ are both smooth at $(X,Y)$ by
  Remark~\ref{r4.2}. Then we get the final conclusion from \cite{K3}, Cor.\!
  1.29, which states that $h^1(\sI_{X'}(v)) \cdot h^2(\sI_{X'}(v)) = 0$ for $v
  = f$ and $g$ implies that $\HH(d',p',\pi')$ is smooth at $(X')$ and we are
  done.
\end{proof}
\begin{example} \label{e4.4} Let $Z$ be the surface which is linked to the
  surface $(X') \in \HH(14,8,20)$ of Example~\ref{e4.3} via a complete
  intersection of type $(5,6)$ containing $X'$. Then $(Z)$ belongs to
  $\HH(16,15,27)$, $\omega_Z = \sI_{X'/Y}(6)$ is globally generated, and
  $M_i(Z) = \oplus H^i(\sI_Z(v))$, $i = 1,2$, are supported at two consecutive
  degrees. Moreover;
\begin{equation} \label{31} 
\begin{aligned}
    h^0 (\sI_Z(5)) & = 1, & h^1 (\sI_Z(4)) & = 2 &\text{and}&& h^1 (\sI_Z(5))
    & = 1 \\ 
    h^2 (\sO_Z(1)) & = 1, & h^2 (\sI_Z(2)) & = 3 &\text{and}&& h^2 (\sI_Z(3))
    & = 1 \ .
\end{aligned}
\end{equation} 
By Proposition~\ref{dp0.3}, we know $\chi (\sN_{X'}) =
5(2d'+\pi'-1)-d'^2+2\chi (\sO_{X'}) = 57$ and since we obviously have
$h^2(\sN_{X'}) = 0$ (from $h^2(\sO_{X'} (1)) = 0$) and we get $h^0(\sN_{X'}) =
57$ from Example~\ref{e4.3}, we conclude that $h^1(\sN_{X'}) = 0$. The
conditions of Proposition~\ref{vanii} are therefore satisfied (replacing $X$ by
$X'$ there). Hence, at $(Z)$, we get that $\HH(16,15,27)_{\gamma,\rho}$ is
smooth of codimension 1 in $\HH(16,15,27)$. Moreover $\HH(16,15,27)$ is smooth
at $(Z)$, and
\begin{equation*}
  \dim_{(Z)} \HH(16,15,27)_{\gamma,\rho} = \dim_{(X')} \HH _{\gamma',\rho'} +
  h^0(\sI_{X'/Y}(5)) + h^0(\sI_{X'/Y}(6)) - h^2(\sO_{X'}) - h^2(\sO_{X'}(1)) =
  65 .
\end{equation*}
Hence $Z$ belongs to a unique generically smooth component $V$ of
$\HH(16,15,27)$ of dimension 66, and since the generic surface $\tilde Z$ of
$V$ do not have the same cohomology as $Z$ (since $\tilde Z \notin
\HH(16,15,27)_{\gamma,\rho}$), we must get $\dim H^0(\sI_{\tilde Z}(5)) = \dim
H^1(\sI_{\tilde Z}(5)) = 0$ while elsewhere the dimension of the cohomology
groups is unchanged, i.e. it is as in \eqref{31}.
\end{example}

\section{Obstructed surfaces in  $\proj{4}$.}
In this section we explicitly prove the existence of obstructed surfaces. Our
examples are as close as they can be to the arithmetically Cohen-Macaulay case.
Indeed, in the examples, one of the Rao modules in the pair $(M_1,M_2)$
vanishes, the other is 1-dimensional. Moreover in Proposition~\ref{them} and
Remark~\ref{H1NX} we gave conditions which imply unobstructedness. Our
Example~\ref{obstructed} is minimal with respect to the mentioned conditions in
the sense that only one of the many cohomology groups, claimed in
Remark~\ref{H1NX} (i) to vanish, is non-zero. It also shows that we in
Remark~\ref{r4.2} can not skip the assumption \eqref{van} since we in
Example~\ref{obstructed} link an unobstructed surface to an obstructed surface
where 
one of the cohomology groups of \eqref{van} is non-zero. Moreover, note that
once having constructed one obstructed surface we can find infinitely many by
linking under the assumption \eqref{van}.

In the following proposition we consider a codimension 2 subscheme $X$ of
$\proj{n+2}$, containing a CI $Y$ of type $(f_1,f_2)$, in order to find
obstructed codimension 2 subschemes of $\proj{n+2}$ for $n \ge 1$. In this
situation we recall that the inclusion map $I_Y \to I_X$ induces a morphism $
H^0(\sN_{X}) \to \oplus_{i=1}^2 H^0(\sO_{X}(f_i)) $ whose composition with
$\oplus_{i=1}^2 H^0(\sO_{X}(f_i)) \to \oplus_{i=1}^2 H^1(\sI_{X}(f_i)) $ we
denote $\beta_{X/Y}$.

\begin{proposition} \label{obstreks} Let $X$ be an equidimensional locally
  Cohen-Macaulay codimension 2 subscheme of $\proj{n+2}$, and let $Y$ and
  $Y_0$ be two complete intersections containing  $X$, both of type
  $(f_1,f_2)$ such that \\[-2mm]

  \qquad {\rm i)} \qquad $ \beta_{X/Y} $ is surjective \ \ \ {\rm and} \ \ \
  $ \beta_{X/Y_0}$ \ is not surjective, \

   \qquad {\rm ii)} \qquad  $  H^n(\sI_{X}(f_i-n-3)) = 0 $  \ \ for $i=1$ and
   $i=2$. \\[2mm]
   Let $X'$ (resp. $X_0'$) be linked to $X$ by $Y$ (resp. $Y_0$). Then $X_0$ is
   obstructed. Moreover if $X$ is unobstructed, then so is $X'$.
 \end{proposition}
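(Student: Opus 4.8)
The plan is to transport everything to the Hilbert flag scheme $D := D(p(v); f_1,f_2)$ of pairs $(W,Z)$ with $Z$ a complete intersection of type $(f_1,f_2)$ containing $W$, and to use the linkage isomorphism $D \simeq D' := D(p'(v); f_1,f_2)$ of \eqref{30}, under which $(X,Y)\mapsto (X',Y)$ and $(X,Y_0)\mapsto (X_0',Y_0)$. First I would translate hypothesis ii): by the codimension~$2$ linkage formulas \eqref{29}, \eqref{29codim2} one has $h^1(\sI_{X'}(f_j)) = h^n(\sI_X(f_1+f_2-n-3-f_j)) = h^n(\sI_X(f_{3-j}-n-3))$, and likewise for $X_0'$, so ii) says exactly that $H^1(\sI_{X'}(f_j)) = H^1(\sI_{X_0'}(f_j)) = 0$ for $j=1,2$. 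Hence by \cite{K3}, Thm.~1.16(b) the projections $p'\colon D' \to {\rm Hilb}^{p'(v)}(\proj{n+2})$ are smooth at both $(X',Y)$ and $(X_0',Y_0)$. Since a smooth morphism both preserves and reflects smoothness at corresponding points, and $D\simeq D'$, this reduces both assertions to statements about the single scheme $D$: the linked $X'$ is unobstructed iff $D$ is smooth at $(X,Y)$, and the linked $X_0'$ (which is what the statement calls $X_0$) is obstructed iff $D$ is singular at $(X,Y_0)$.

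The key computation is the tangent space of $D$ along the fibre $F := p^{-1}((X))$ of the other projection $p\colon D \to {\rm Hilb}^{p(v)}(\proj{n+2})$, $(W,Z)\mapsto (W)$. This fibre is the family of complete intersections of type $(f_1,f_2)$ containing $X$, a nonempty open subset of $\pp(H^0(\sI_X(f_1)))\times\pp(H^0(\sI_X(f_2)))$, hence irreducible and smooth of a dimension $\nu$ that is independent of the chosen $Z$. The flag deformation sequence of \cite{K3} (the one behind the criterion (1.11) invoked in Proposition~\ref{vanii}) then gives, for every $Z\in F$,
\begin{equation*}
  \dim T_{(X,Z)}D = \nu + h^0(\sN_X) - {\rm rank}\,\beta_{X/Z},\qquad \beta_{X/Z}\colon H^0(\sN_X)\to H^1(\sI_X(f_1))\oplus H^1(\sI_X(f_2)),
\end{equation*}
because the tangent map $T_{(X,Z)}D \to H^0(\sN_X)$ has image $\ker\beta_{X/Z}$ and kernel the fibre tangent space $T_ZF$.

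For the obstructedness of $X_0'$ I would argue purely with this formula, using neither hypothesis on $X$ itself. The rank of $\beta_{X/Z}$ is lower semicontinuous in $Z\in F$, so surjectivity is an open condition; it holds at $Z=Y$ by hypothesis~i), hence on a dense open subset of the irreducible $F$. Thus the generic value of $\dim T_{(X,Z)}D$ on $F$ is its minimum $\nu + h^0(\sN_X) - \mu$, where $\mu := h^1(\sI_X(f_1))+h^1(\sI_X(f_2))$, whereas at $Z=Y_0$ the rank drops, giving $\dim T_{(X,Y_0)}D > \nu + h^0(\sN_X) - \mu$. If $D$ were smooth at $(X,Y_0)$, its unique local component $W$ would have dimension $\dim T_{(X,Y_0)}D$ and, being locally irreducible there, would contain the germ of the irreducible $F$, hence all of $F$; evaluating at a generic $Z\in F$ would force $\nu + h^0(\sN_X)-\mu = \dim T_{(X,Z)}D \ge \dim W = \dim T_{(X,Y_0)}D > \nu+h^0(\sN_X)-\mu$, a contradiction. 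So $D$ is singular at $(X,Y_0)$ and $X_0'$ is obstructed.

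For the preservation of unobstructedness I would assume $X$ unobstructed and show $D$ is smooth at $(X,Y)$. Since $\beta_{X/Y}$ is surjective, $\coker\beta_{X/Y}=0$ and the obstruction space $A^2(X\subseteq Y)$ collapses to $\ker l_{X/Y}^2$; equivalently, the conditions cutting out the locus of deformations of $X$ that still lie on a complete intersection of type $(f_1,f_2)$ are $\mu$ independent conditions on the smooth ${\rm Hilb}^{p(v)}$, so that locus is smooth of dimension $h^0(\sN_X)-\mu$, and fibering by $F$ yields $\dim_{(X,Y)}D = \nu + h^0(\sN_X)-\mu = \dim T_{(X,Y)}D$. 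Hence $D$ is smooth at $(X,Y)$ and, by the reduction above, $X'$ is unobstructed; this is the refinement of Remark~\ref{r4.2} recorded in \cite{K3}, Prop.~3.12. The hard part throughout is the passage from ``the tangent space jumps'' to a genuine deficiency of the local dimension in the obstructedness half; I would resolve it not by any non‑vanishing of an obstruction space but by pinning the local dimension of $D$ at $(X,Y_0)$ against the generic, minimal‑tangent points of the irreducible fibre $F$.
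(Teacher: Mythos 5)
Your proposal is correct and takes essentially the same route as the paper: it rests on the flag scheme $D(p(v);f_1,f_2)$ and the linkage isomorphism \eqref{30}, the tangent-space sequence (1.11) of \cite{K3} expressing $\dim A^1(X\subseteq Z)$ as (fibre dimension) $+\, h^0(\sN_X)-\operatorname{rank}\beta_{X/Z}$, irreducibility of the fibres of $p$ (\cite{K3}, Thm.\ 1.16(a)) to trap both points in one component and derive the contradiction, smoothness of $p'$ from the $H^1$-vanishing supplied by hypothesis ii) via \eqref{29} (\cite{K3}, Thm.\ 1.16(b)), and \cite{K3}, Prop.\ 3.12 for the final assertion. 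The only immaterial deviations are that you compare $(X,Y_0)$ with a generic member of the fibre (via semicontinuity of $\operatorname{rank}\beta_{X/Z}$) where the paper compares directly with $(X,Y)$, and that the fibre is really a quotient of an open subset of $\pp(H^0(\sI_X(f_1)))\times\pp(H^0(\sI_X(f_2)))$, of dimension $\sum_i h^0(\sI_{X/Z}(f_i))$, rather than such an open subset itself — which does not affect your argument since you only use its irreducibility and the constancy of $\nu$.
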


 \begin{proof} If $A^1(X \subseteq Y)$ is the tangent space of the Hilbert
   flag scheme $D(p(v); f_1,f_2)$ at $(X,Y)$, then it is shown in \cite{K3},
   (1.11) that there is an exact sequence $$0 \to \oplus_{i=1}^2
   H^0(\sI_{X/Y}(f_i)) \to A^1(X \subseteq Y) \to H^0(\sN_{X}) \to
   \oplus_{i=1}^2 H^1(\sI_{X}(f_i)) $$ where the rightmost map is
   $\beta_{X/Y}$. The corresponding exact sequence for $(X \subseteq Y_0)$
   together with the assumption $(i)$ show that 
   $$ \dim A^1(X \subseteq Y) < \dim A^1(X \subseteq Y_0)$$
   because it is easy to see $ h^0(\sI_{X/Y}(v)) = h^0(\sI_{X/Y_0}(v))$ for
   every $v$. We {\it claim} that $D(p(v); f_1,f_2)$ is not smooth at
   $(X,Y_0)$. Suppose the converse. Since it is shown in \cite{K3}, Thm.\!
   1.16 (a) that the fibers of the first projection $p : D(p(v); f_1,f_2) \to
   {\rm Hilb}^{p(v)}( \proj{n+2})$ are irreducible, it follows that there
   exists an irreducible component $W$ of $D(p(v); f_1,f_2)$ which contains
   both points, $(X,Y)$ and $(X,Y_0)$. Hence if $D(p(v); f_1,f_2)$ is smooth
   at $(X,Y_0)$, we get
 $$ \dim A^1(X \subseteq Y_0) = \dim W \le \dim_{(X,Y)} D(p(v); f_1,f_2) \le
 \dim A^1(X \subseteq Y),$$ i.e. a 
 contradiction. 

 Thanks to \eqref{30} we get that $D(p'(v); f_1,f_2)$ is not smooth at
 $(X_0',Y_0)$. Since $ h^1(\sI_{X_0'}(f_{i}-n-3)) = h^n(\sI_{X}(f_{3-i}-n-3))
 = 0$ for $i=1,2$, cf. \eqref{29}, and since the vanishing of $
 H^1(\sI_{X_0'}(f_{i}-n-3))$ implies that the first projection $p' : D(p'(v);
 f_1,f_2) \to {\rm Hilb}^{p'(v)}( \proj{n+2})$ is smooth at $(X_0',Y_0)$ by
 \cite{K3}, Thm.\! 1.16 (b), we conclude that $X_0'$ is obstructed. Finally,
 for the last conclusion, if we have the surjectivity of $ \beta_{X/Y} $ and
 assume the unobstructedness of $X$, we get that $D(p(v);
 f_1,f_2)$ is smooth at $(X,Y)$ by \cite{K3}, Prop.\! 3.12. Using \eqref{30}
 and \eqref{29} once more we conclude that $X'$ is unobstructed, and we are
 done.
\end{proof}

We think the surjectivity of $ \beta_{X/Y} $ may often hold, provided the
generators of $I_Y$ are among the minimal generators of $I_X$, but this is
difficult to prove. In the Buchsbaum case, however, it is easy to see the
surjectivity, as observed in \cite{BKM} for curves. Indeed even though
the statement of Proposition~\ref{obstreks} and the remark below generalizes
\cite{BKM}, Prop.\! 2.1 by far, the ideas of the proof are quite close to the
idea in Prop.\! 2.1 of \cite{BKM}.
\begin{remark} \label{obstreksrem} In this remark we consider surfaces in $
  \proj{4}$ with minimal resolution given as in \eqref{res3}.

  {\rm (i)} Using \eqref{mseq} and the spectral sequence \eqref{mspect} we get
  an exact sequence $$ \to H^0 (\sN_X) \rightarrow \HHom 0R (I_X,
  H_{\mfm}^2 (I_X)) \xrightarrow{\alpha} \EExt 0R2 (I_X,I_X) \to \ $$ where
  $ \HHom 0R (I_X, H_{\mfm}^2 (I_X)) \simeq \oplus_i H^1(\sI_X(n_{1,i}))$
  provided $ H^1(\sI_X(n_{2,i}))=0$ for any $i$. The natural map $H^0
  (\sN_X) \rightarrow \HHom 0R (I_X, H_{\mfm}^2 (I_X)) \simeq \oplus_i
  H^1(\sI_X(n_{1,i})) $, which we denote $\beta_X$, is correspondingly defined
  as $ \beta_{X/Y} $ above, but with the difference that a set of all minimal
  generators of $I_X$ is used. In particular if the generators of $I_Y$ are
  among the minimal generators of $I_X$, then the composition of $\beta_X$
  with the projection $ \oplus_i H^1(\sI_X(n_{1,i})) \to \oplus_{i=1}^2
  H^1(\sI_X(f_{i}))$ is $ \beta_{X/Y} $. It follows that if $$ \EExt 0R2
  (I_X,I_X)=0 \ \ {\rm and} \ \ H^1(\sI_X(n_{2,i}))=0 {\rm \ for \ any \ i} \
  ,$$ then $ \beta_{X/Y} $ is surjective. Note that, by \eqref{mduality} and
  \eqref{mspect} (cf. the proof of Proposition~\ref{them}), $ \EExt 0R2
  (I_X,I_X)=0 $ provided $\EExt {-5}R1(I, M_1) ={_{-5}\!\Hom_R}(I , M_2)=0,$
  i.e. provided
  $$ H^1(\sI_X (n_{2,i}-5))=0 \ {\rm and} \ H^2(\sI_X (n_{1,i}-5)) =0 \ {\rm
    for \ every} \ i. $$

  {\rm (ii)} If, however, the minimal generators $\{F_1,F_2\}$ of $I_Y$ do not
  belong to a set of minimal generators of $I_X$, say $F_i = H_i \cdot G_i$
  for some $G_i \in I_X$, $i=1,2$, then $ \beta_{X/Y} $ is easily seen to be
  non-surjective under a manageable assumption. Indeed let $g_i$ be the degree
  of the form $G_i$, let $Y_0$ be the CI with homogeneous ideal $I_{Y_0}=
  (G_1,G_2)$ and suppose the the obvious map $$h\ : \ \oplus_{i=1}^2
  H^1(\sI_X(g_{i})) \xrightarrow{(H_1,H_2)} \oplus_{i=1}^2 H^1(\sI_X(f_{i})) \
  \ { \rm is \ \ not \ \ surjective.}$$ Then $ \beta_{X/Y} $ can not be
  surjective because it factors via $h$, i.e. $ \beta_{X/Y}= h \circ
  \beta_{X/Y_0} $!
\end{remark}
\begin{example} \label{obstructed} If we link the smooth quintic scroll $Z$ of
  $\HH(5,-1,1)$ with Rao modules $H_*^1 (\sI_{Z})=0$, $H_*^2(\sI_{Z}) \simeq
  k$ and minimal resolution (cf. \cite{DES}, B.2.1),
\begin{equation} \label{scroll}
  0 \to R(-5) \to  R(-4)^{\oplus 5} \to R(-3)^{\oplus 5} \to I_Z \to 0,
\end{equation}
using a CI of type $(5,6)$ containing $Z$, then the ideal of the 
linked surface $X$ has a minimal resolution
\begin{equation*}
  0 \to R(-11)  \to R(-10)^{\oplus 5} \to R(-9)^{\oplus 10} \to R(-8)^{\oplus
    5} \oplus R(-6)  \oplus R(-5)  \to I_X \to 0
\end{equation*}
and Rao modules given by $H_*^2(\sI_{X})=0$, $h^1 (\sI_{X}(6))=1$ and $H^1
(\sI_{X}(v))=0$ for $v \ne 6$. Using \eqref{29} we see that $(X)$ belongs to
$\HH(d,p,\pi) = \HH(25,99,71)$. This surface $X$ has invariants such that
Proposition~\ref{obstreks} and Remark~\ref{obstreksrem} apply. Indeed we can
link $X$ to two different surfaces $X'$ and $X_0'$ using CI's $Y$ and $Y_0$
containing $X$, both of type $(6,8)$, generated in the following way. Let
$F_5$, resp. $F_6$, be the minimal generator of $ I_X$ of degree 5, resp. 6,
and let $G$ be a general element of $H^0(\sI_{X}(8))$. Then we take $Y$, resp.
$Y_0$, to be given by $I_Y=(F_6,G)$, resp. $I_{Y_0}=(H \cdot F_5,G)$ where
$H$is a linear form. We may check that all assumptions of
Remark~\ref{obstreksrem} are satisfied. Hence we get that $X'$ and $X_0'$
belong to a common irreducible component of $ \HH(d',p',\pi')= \HH(23,80,61)$,
that $X_0'$ is obstructed with minimal resolution
\begin{equation*}
  0 \to R(-8) \to  R(-7)^{\oplus 5} \oplus R(-8) \oplus  R(-9) \to
  R(-6)^{\oplus 6} \oplus  R(-8) \to I_{X_0'} \to 0,
\end{equation*} while $X'$ is unobstructed with minimal resolution 
\begin{equation*}
  0 \to R(-8) \to  R(-7)^{\oplus 5} \oplus  R(-9) \to
  R(-6)^{\oplus 6} \to I_{X'} \to 0.
\end{equation*}
Note that it is straightforward to find these resolutions since $X'$ and $X_0'$
are bilinked to $Z$ and we know the minimal resolution of $I_Z$, see
\cite{MIG} or the sequence \eqref{Eres} appearing later in this paper. We
observe that common direct free factors (``ghost terms'') are present in the
minimal resolution, similar to what happens for obstructed curve with ``small
Rao module'', cf. \cite{krao}. Moreover since the assumptions of
Proposition~\ref{them} are satisfied for $X'$, we also get the
unobstructedness of $X'$ from that Proposition and the dimension, $\dim_{(X')}
\HH(23,80,61)= 1 + \delta^3(-5) - \delta^2(-5) + \delta^1(-5)= 163$. However,
since the conditions of Remark~\ref{H1NX} (i) also hold, we get
$H^1(\sN_{X'})=0$ and hence it is easier to compute $\dim_{(X')}
\HH(23,80,61)$ by using Proposition~\ref{dp0.3}. We get $$\dim_{(X')}
\HH(23,80,61)= \chi (\sN_{X'})=5(2d' + \pi' - 1) - d'^2 + 2\chi (\sO_{X'})=
163.$$ Note that neither the assumptions of Proposition~\ref{them}, nor the
assumptions of Remark~\ref{H1NX} (i), are satisfied for $X_0'$. Indeed
Remark~\ref{H1NX} (i) a little extended will show $h^1(\sN_{X_0'})=1$ (i.e.
just compute the dimension using \eqref{lemm25}). The surface $X_0'$ is
reducible.
\end{example}

\begin{example} \label{obstructedsm} If we link $X_0'$ using a general CI of
  type $(9,9)$ containing $X_0'$, we get a smooth obstructed surface $S$ of
  degree 58. Indeed the assumptions of Remark~\ref{r4.2} are satisfied. So
  $S$ is obstructed, and we have used Macaulay 2 (\cite{Mac}) to verify that
  $S$ is smooth provided the CI's used in the linkages of
  Example~\ref{obstructed} are general enough under the specified
  restrictions. The surface $S$ is in the biliaison class of the Veronese
  surface in $\proj{4}$.

  Finally if we link $S$ via a general CI of type $(9,12)$ containing $S$, we
  get an obstructed surface $S'$ of degree 50 by Remark~\ref{r4.2}. We have
  used Macaulay 2 to verify that the surface is smooth. The surface $S'$ is in
  the biliaison class of the quintic elliptic scroll in $\proj{4}$. Since $S'$
  is bilinked to the surface $X_0'$ of Example~\ref{obstructed} we easily find
  the minimal resolution of $I_{S'}$ to be 
  \begin{equation*} 0 \to R(-11) \to R(-10)^{\oplus 5} \oplus R(-11) \oplus
    R(-12)^{\oplus 2} \to R(-9)^{\oplus 7} \oplus R(-11) \to I_{S'} \to 0.
\end{equation*}
Note that we again have ``ghost terms'' in the minimal resolution in degree
$c+5$ where $h^2(\sI_{S'}(c)) \ne 0$. This feature seems to be related to
obstructedness, as in the curve case, cf. \cite{krao}.
\end{example}

\section{Even liaison of codimension 2 subschemes of $\proj{n+2}$.}

In this section we prove the main even liaison theorem of this paper, which
holds for any equidimensional lCM codimension 2 subscheme $X$ of $\proj{n+2}$.
We also generalize Proposition~\ref{them} and the vanishing result for
$h^1(\sN_{X})$ of Remark~\ref{H1NX} to schemes $X$ of dimension $n>2$ and we
give an example of an obstructed 3-fold.

First we define $\delta_X^{m}(v)$. Let
\begin{equation} \label{res3cod} 0 \to \bigoplus_{i=1}^{r_{n+2}} R(-n_{n+2,i})
  \to \bigoplus_{i=1}^{r_{n+1}} R(-n_{n+1,i}) \to ... \to
  \bigoplus_{i=1}^{r_2} R(-n_{2,i}) \to \bigoplus_{i=1}^{r_1} R(-n_{1,i}) \to I
  \to 0
\end{equation}
be a  minimal resolution of $I=I_X$ and let the
invariant $ \delta^{m}(v)= \delta_X^{m}(v)$ be defined by
\begin{equation} \label{defdelta}
  \delta_X^{m}(v) = \sum_{j=1}^{n+2} \sum_{i=1}^{r_j} (-1)^{j+1}h^{m}
  (\sI_X 
  (n_{j,i}+v)) \ .
\end{equation}
Since adding common direct free factors in consecutive terms of
\eqref{res3cod} does not change $ \delta_X^{m}(v)$, the resolution of $I$
does not really need to be minimal in the definition of $\delta_X^{m}(v)$.
\begin{theorem} \label{maint4.1} Let $X$ and $X'$ be two equidimensional
  locally Cohen-Macaulay codimension 2 subschemes of $\proj{n+2}$, linked to
  each other in two steps by two complete intersections, and suppose that
  $(X)$ (resp. $(X')$) belongs to the Hilbert scheme $\HH_{\gamma,\rho}$
  (resp. $\HH_{\gamma',\rho'}$) of constant cohomology. Then
  $$ {\rm i)}  \qquad  \qquad \  \ \delta_X^{n+1}(-n-3) - \dim_{(X)}
  \HH_{\gamma,\rho} =  \delta_{X'}^{n+1}(-n-3) - \dim_{(X')}
  \HH_{\gamma',\rho'} \ . \ \qquad \qquad \ \ \  \ $$ In particular $ \ \ {\rm
    obsumext}(X):= 1+ \delta_X^{n+1}(-n-3) - \dim_{(X)} \HH_{\gamma,\rho} \ \
  $ is a biliaison invariant.
   $$ {\rm ii)} \ \ \qquad   \delta_X^{n+1}(-n-3) - \dim \EExt 0R1 (I_X,
   I_X)_\rho = \delta_{X'}^{n+1}(-n-3) - \dim \EExt 0R1 (I_{X'}, I_{X'})_{\rho
     '} \ . \ $$ In particular \ $ \ {\rm sumext}(X):=1+
   \delta_X^{n+1}(-n-3) - \dim \EExt 0R1 (I_X, I_X)_\rho \ $ \ is a biliaison
   invariant. \\[-2mm] 

   {\rm iii)} We have $ \ {\rm sumext}(X) \le {\rm obsumext}(X) $, with
   equality if and only if \ $ \HH_{\gamma,\rho}$ is smooth at $(X)$.
\end{theorem}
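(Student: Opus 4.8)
The plan is to reduce both invariance statements to a single elementary linkage and then to read off the result from Theorem~\ref{t4.1}, which already records how $\dim_{(X)}\HH_{\gamma,\rho}$ and $\dim \EExt 0R1 (I_X,I_X)_\rho$ change under one direct linkage by a CI $Y$ of type $(f,g)$. Since biliaison is by definition a chain of two direct linkages, it suffices to prove that the combination $\delta_X^{n+1}(-n-3)-\dim_{(X)}\HH_{\gamma,\rho}$ (and the analogue with the tangent space) is unchanged by a single direct linkage taking $X$ to $X'$, and then to apply this along both links. The one ingredient not yet available is a linkage formula for $\delta_X^{n+1}(-n-3)$ whose correction terms match those of Theorem~\ref{t4.1}(i).

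To produce it I would first make the top term intrinsic. From $0\to\sI_X\to\sO_{\proj{}}\to\sO_X\to 0$ and $H^{n}(\sO_{\proj{n+2}}(w))=H^{n+1}(\sO_{\proj{n+2}}(w))=0$ one gets $h^{n+1}(\sI_X(w))=h^{n}(\sO_X(w))$, and Serre duality on the lCM scheme $X$ together with $\omega_X=\sI_{X'/Y}(f+g-n-3)$ yields
$$\delta_X^{n+1}(-n-3)=\sum_{j,i}(-1)^{j+1}h^0\bigl(\sI_{X'/Y}(f+g-n_{j,i})\bigr),$$
where the $n_{j,i}$ are the twists of the minimal resolution \eqref{res3cod} of $I_X$; symmetrically $\delta_{X'}^{n+1}(-n-3)$ is expressed through $\sI_{X/Y}$ and the twists of $I_{X'}$. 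I would then invoke the mapping-cone description of liaison (\cite{PS}, \cite{MIG}; cf. \eqref{Eres}): the resolution of $I_{X'}$ is, up to common free factors, the $R(-f-g)$-twisted dual of the resolution of $I_X$ spliced with the Koszul complex of $Y$, so its twists are $\{f+g-n_{j,i}\}$ at reversed homological degree together with the Koszul twists $\{f,g\}$. Substituting these into the symmetric expression and using $h^0(\sI_{X/Y}(m))=h^0(\sI_X(m))-h^0(\sI_Y(m))$ (valid since $Y$ is ACM, so $H^1(\sI_Y(m))=0$) collapses the comparison, via the liaison relations \eqref{29codim2}, to
$$\delta_X^{n+1}(-n-3)+h^0(\sI_X(f))+h^0(\sI_X(g))=\delta_{X'}^{n+1}(-n-3)+h^0(\sI_{X'}(f))+h^0(\sI_{X'}(g)).$$

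This $\delta$-identity carries exactly the correction terms appearing in the first form of Theorem~\ref{t4.1}(i); subtracting the two gives $\delta_X^{n+1}(-n-3)-\dim_{(X)}\HH_{\gamma,\rho}=\delta_{X'}^{n+1}(-n-3)-\dim_{(X')}\HH_{\gamma',\rho'}$ for one linkage, and hence, applying it along the two links of a biliaison, the invariance of ${\rm obsumext}$ in i). Part ii) is word-for-word the same, with $\dim_{(X)}\HH_{\gamma,\rho}$ replaced by the tangent-space dimension $\dim \EExt 0R1 (I_X,I_X)_\rho$ and Theorem~\ref{t4.1}(i) replaced by Theorem~\ref{t4.1}(ii); the $\delta$-side is untouched, since $\delta_X^{n+1}(-n-3)$ does not depend on which dimension we subtract. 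For iii), the codimension-2 generalization of Proposition~\ref{dp3.1} identifies $\EExt 0R1 (I_X,I_X)_\rho$ with the Zariski tangent space of $\HH_{\gamma,\rho}$ at $(X)$, so $\dim \EExt 0R1 (I_X,I_X)_\rho\ge\dim_{(X)}\HH_{\gamma,\rho}$ with equality precisely when the local ring is regular; thus ${\rm obsumext}(X)-{\rm sumext}(X)=\dim \EExt 0R1 (I_X,I_X)_\rho-\dim_{(X)}\HH_{\gamma,\rho}\ge 0$, vanishing if and only if $\HH_{\gamma,\rho}$ is smooth at $(X)$, consistent with Theorem~\ref{t4.1}(iii).

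The main obstacle is the mapping-cone bookkeeping in the second paragraph: one must track the reversal of homological degree, absorb the ``ghost'' free summands (harmless, since as noted after \eqref{defdelta} they leave $\delta_X^{m}(v)$ unchanged), and verify that the Koszul twists contribute precisely $h^0(\sI_X(f))+h^0(\sI_X(g))$, with the spurious degree-$(f+g)$ contribution cancelling, and nothing else. Pinning down these signs and the cancellation of the extra term, so that the $\delta$-correction exactly mirrors the flag-scheme fibre dimensions of Theorem~\ref{t4.1}, is the delicate point; should a single linkage prove too subtle to handle cleanly, the same ingredients applied to both links of the biliaison still close the argument, since composing the two dualizations restores the original homological grading.
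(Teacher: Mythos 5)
Your reduction to a single direct linkage rests on a resolution claim that is false in exactly the cases the theorem is about. For one (odd) link the free resolution of $I_{X'}$ is \emph{not} the $R(-f-g)$-twisted dual of \eqref{res3cod} spliced with the Koszul complex of $Y$: dualizing the resolution of $R/I_X$ produces a complex whose intermediate homology is $\Ext^j_R(R/I_X,R)$, i.e.\ the (shifted) duals of the deficiency modules, and these vanish only when $X$ is ACM --- the case already settled by Corollary~\ref{themACM}. A concrete check: two skew lines $C\subset\proj{3}$, with resolution $0\to R(-4)\to R(-3)^{\oplus 4}\to R(-2)^{\oplus 4}\to I_C\to 0$, linked by a CI of type $(2,3)$ give a rational quartic $C'$, whose minimal resolution is $0\to R(-5)\to R(-4)^{\oplus 4}\to R(-2)\oplus R(-3)^{\oplus 3}\to I_{C'}\to 0$; your dual-splice recipe would instead predict a minimal generator of $I_{C'}$ in degree $1$ (from $R(-4)^{\vee}(-5)=R(-1)$) and syzygies in degree $2$, which is absurd. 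Consequently your key single-link identity $\delta_X^{n+1}(-n-3)+h^0(\sI_X(f))+h^0(\sI_X(g))=\delta_{X'}^{n+1}(-n-3)+h^0(\sI_{X'}(f))+h^0(\sI_{X'}(g))$ is left unproven. (It may in fact be true --- for curves it is consistent with the duality ${_0\!\ext^i}(M,M)={_0\!\ext^i}(M^{\vee},M^{\vee})$, and it checks out numerically in the example above --- but the paper neither claims nor proves more than biliaison invariance, and your route to it collapses.)

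Your closing hedge does not repair this: ``applying the same ingredients to both links'' still presupposes a usable resolution after the first, odd, link, which is precisely what is unavailable. What works, and is the paper's actual proof, is to treat the two links as one package via the even-liaison resolution \eqref{Eres}: writing \eqref{res3cod} as $0\to E\to\oplus_i R(-n_{1,i})\to I_X\to 0$ with $E$ as in \eqref{E} and $h=f'+g'-f-g$, one has $0\to E(-h)\oplus R(-f-h)\oplus R(-g-h)\to \oplus_i R(-n_{1,i}-h)\oplus R(-f')\oplus R(-g')\to I_{X'}\to 0$, valid for arbitrary lCM codimension~2 subschemes, and since $\delta$ is insensitive to ghost terms this computes $\delta_{X'}^{n+1}(-n-3)$ directly from the twists $n_{j,i}$ of $I_X$. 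The real content of the paper's proof is then the bookkeeping you have skipped: the comparison functions $\alpha$, $\beta$, $\epsilon$, $\eta$, the identity $h^0(\sI_Y(f'-h))+h^0(\sI_Y(g'-h))=h^0(\sI_{Y'}(f))+h^0(\sI_{Y'}(g))$, and the case division $h\ge 0$ versus $h<0$ (the latter handled by running the biliaison backwards), all matched against \emph{two} applications of Theorem~\ref{t4.1}(i) (resp.\ (ii)). Your part (iii) is correct and agrees with the paper: granting that $\EExt 0R1 (I_X,I_X)_\rho$ is the tangent space of $\HH_{\gamma,\rho}$ at $(X)$, the difference ${\rm obsumext}(X)-{\rm sumext}(X)$ is tangent-space dimension minus local dimension, nonnegative and zero exactly at smooth points.
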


\begin{remark} This result is motivated by Remarks~\ref{sumex} and \ref{p3.6}.
  Indeed we were quite convinced that Theorem~\ref{maint4.1} was true before
  starting proving it. Note that the dimension formula of Remark~\ref{p3.6}
  was quite involved already for the case $n = \dim X =2$ and we expect a very
  complicated formula for $n > 2$. So Theorem~\ref{maint4.1} may be a good
  practical approach to the problem of studying $ \HH_{\gamma,\rho}$ and $
  {\rm Hilb}^{p(v)}(\proj{n+2})$ with respect to smoothness and dimension for
  $n > 2$. However, except for the other results of this paper, we have no
  better option for the use of Theorem~\ref{maint4.1} that to first compute $
  \ {\rm sumext}(X)$ and $ {\rm obsumext}(X)$ through a nice representative in
  the even liaison class, e.g. for the minimal element of the class, before we
  use it for an arbitrary element in the even liaison class.
\end{remark}
\begin{remark} \label{seminat} For the application of Theorem~\ref{maint4.1}
  there is one natural situation where $\HH_{\gamma, \rho}$ is isomorphic to $
  {\rm Hilb}^{p(v)}(\proj{n+2})$ at $(X)$, namely in the case $X$ has
  seminatural cohomology. We say a subscheme $X \subseteq \proj{n+2}$ has
  seminatural cohomology if for every $v \in \mathbb Z$, at most one of groups
  $H^0(\sI_X(v)), H^1(\sI_X(v)),..., H^{n+1}(\sI_X(v))$ are non-zero. In this
  case a generization (i.e. a deformation to more general element
  in ${\rm Hilb}^{p(v)}(\proj{n+2})$) of $X$ is forced to have the same
  cohomology as $X$ by the semicontinuity of $h^i(\sI_X(v))$, i.e.
  $\HH_{\gamma, \rho} \cong {\rm Hilb}^{p(v)}(\proj{n+2})$ as schemes at
  $(X)$.
\end{remark}
\begin{proof} Let $X$ be linked to $X_1$ by a CI \ $Y
  \subseteq \proj{n+2}$ of type $(f,g)$ and let $X_1$ be linked to $X'$ by
  some CI \ $Y' \subseteq \proj{n+2}$ of type $(f',g')$. If
  $(X_1)$ belongs to the Hilbert scheme $\HH_1:=\HH_{\gamma_1,\rho_1}$ of
  constant cohomology,
  then by Theorem~\ref{t4.1}, \\[2mm]
  $\dim_{(X_1)} \HH_{1} = \dim_{(X)} \HH_{\gamma,\rho} + h^0(\sI_{X/Y}(f)) +
  h^0(\sI_{X/Y}(g)) - h^n(\sO_X(f-n-3)) -
  h^n(\sO_X(g-n-3))$, \\[2mm]
  {\small $\dim_{(X_1)} \HH_{1} = \dim_{(X')} \HH_{\gamma',\rho'} +
    h^0(\sI_{X'/Y'}(f')) + h^0(\sI_{X'/Y'}(g')) - h^n(\sO_{X'}(f'-n-3)) -
    h^n(\sO_{X'}(g'-n-3))$.}\\[3mm]
  Let $h=f'+g'-f-g$. Using \eqref{29codim2} twice we get $ h^0(\sI_{X'/Y'}(v))=
  h^0(\sI_{X/Y}(v-h))$. Hence {\small 
    \begin{equation} \label{hetta} \dim_{(X')} \HH_{\gamma',\rho'}= \dim_{(X)}
      \HH_{\gamma,\rho} + h^0(\sI_{X/Y}(f)) + h^0(\sI_{X/Y}(g))-
      h^0(\sI_{X/Y}(f'-h)) + h^0(\sI_{X/Y}(g'-h))+ \eta 
    \end{equation} } where
  $\eta$ is defined by {\small
    \begin{equation} \label{etta} \eta := h^n(\sO_{X'}(f'-n-3))+
      h^n(\sO_{X'}(g'-n-3))- h^n(\sO_{X}(f-n-3))-h^n(\sO_{X}(g-n-3)).
    \end{equation}
  } Next we need to find a free resolution of $I'=I_{X'}$ in terms of the
  minimal resolution of $I=I_{X}$ in \eqref{res3cod}. If we define $E$ by the
  exact sequence
  \begin{equation} \label{E} 0 \to \oplus_{i=1}^{r_{n+2}} R(-n_{n+2,i}) \to
    ... \to \oplus_{i=1}^{r_{3}} R(-n_{3,i}) \to \oplus_{i=1}^{r_2}
    R(-n_{2,i}) \to E \to 0,
\end{equation} 
we may put \eqref{res3cod} in the form $ 0 \to E \to \oplus_{i=1}^{r_1}
R(-n_{1,i}) \to I \to 0.$ Then it is well known that there is an exact
sequence 
\begin{equation} \label{Eres} 0 \to E(-h) \oplus R(-f-h) \oplus R(-g-h) \to
  \oplus_{i=1}^{r_1} R(-n_{1,i}-h) \oplus R(-f') \oplus R(-g') \to I' \to
  0
\end{equation} which combined with \eqref{E} yields a free resolution of
$I'$ (see \cite{MIG}). 

We will use this resolution of $I'$ and \eqref{res3cod} to see the connection
between $\delta_X^{n+1}(-n-3)$ and $\delta_{X'}^{n+1}(-n-3)$. First we need to
compute $\beta$ defined by
$$ \beta:= 
\sum_{j=1}^{n+2} \sum_{i=1}^{r_j} (-1)^{j+1}\alpha(n_{j,i}-n-3) \ \ {\rm 
  where } \ \ \alpha(v):= h^n(\sO_{X'}(v+h)) - h^n(\sO_{X}(v)) \ . $$ We {\it
  claim} that
\begin{equation} \label{betta} \beta = h^0(\sI_{X}(f)) + h^0(\sI_{X}(g))-
  h^0(\sI_{X}(f'-h))- h^0(\sI_{X}(g'-h)) + h^0(\sI_{X}(-h)).
\end{equation}  Indeed by
\eqref{29codim2}, \ \ $\alpha(v)= h^0(\sI_{X_1/Y'}(f'+g'-n-3-v-h)) -
h^0(\sI_{X_1/Y}(f+g-n-3-v))$.
\\[1mm]
Moreover since $0 \to \sI_{Y'} \to \sI_{X_1} \to \sI_{X_1/Y'} \to 0$ and $0
\to \sI_{Y} \to \sI_{X_1} \to \sI_{X_1/Y} \to 0$ are exact, we get
\begin{equation} \label{alfa}
\alpha(v)= h^0(\sI_{Y}(f+g-n-3-v)) - h^0(\sI_{Y'}(f+g-n-3-v)).
\end{equation}
Let $r(v):= \dim R_{(-n-3+v)}$. Combining with the minimal resolutions of
$I_Y$ and $I_Y'$, we get
$$\alpha(v):= r(f-v)+ r(g-v) - r(-v)- r(f'-h-v)- r(g'-h-v)+ r(-h-v).$$
Then we get the claim since \eqref{res3cod} implies $ h^0(\sI_{X}(v))=
\sum_{j=1}^{n+2} \sum_{i=1}^{r_j} (-1)^{j+1} r(v-n_{j,i}+n+3)$ for any $v$ and
since $h^0(\sI_{X}(0))=0$.

Using the resolution of $I'$ deduced from \eqref{Eres} and the definition
\eqref{defdelta} we get $$\delta_{X'}^{n+1}(-n-3)= \sum_{j=1}^{n+2}
\sum_{i=1}^{r_j} (-1)^{j+1} h^n(\sO_{X'}(n_{j,i}+h-n-3))+ \epsilon $$ where
$\epsilon$ is defined by {\small $$\epsilon := h^n(\sO_{X'}(f'-n-3))+
  h^n(\sO_{X'}(g'-n-3))-
  h^n(\sO_{X'}(f+h-n-3))-h^n(\sO_{X'}(g+h-n-3)).$$}Comparing $\epsilon$ with
$\eta$ in $\eqref{etta}$ and recalling the definition of $ \alpha$, we have
$\epsilon = \eta -\alpha(f-n-3) -\alpha(g-n-3)$. Moreover the definition of $
\alpha$, the proven claim and \eqref{defdelta} lead to
$\delta_{X'}^{n+1}(-n-3)= \delta_{X}^{n+1}(-n-3)+ \beta +\epsilon .$ Combining
we get
$$\delta_{X'}^{n+1}(-n-3)= \delta_{X}^{n+1}(-n-3)+ \beta +\eta -\alpha(f-n-3)
-\alpha(g-n-3) .$$ Comparing with \eqref{hetta} we get (i) of the Theorem
provided we can show that $$ h^0(\sI_{X/Y}(f)) + h^0(\sI_{X/Y}(g))-
h^0(\sI_{X/Y}(f'-h)) - h^0(\sI_{X/Y}(g'-h))= \beta -\alpha(f-n-3)
-\alpha(g-n-3) .$$
Suppose $h \ge 0$. Looking to \eqref{betta}, it suffices to show
 $$ -h^0(\sI_{Y}(f)) - h^0(\sI_{Y}(g))+
 h^0(\sI_{Y}(f'-h)) + h^0(\sI_{Y}(g'-h))= -\alpha(f-n-3) -\alpha(g-n-3) .$$
 Thanks to \eqref{alfa} it remains to show $ h^0(\sI_{Y}(f'-h)) +
 h^0(\sI_{Y}(g'-h)) = h^0(\sI_{Y'}(f)) + h^0(\sI_{Y'}(g))$. Using the minimal
 resolutions of $I_Y$ and $I_{Y'}$ and that $h=f'+g'-f-g \ge 0$, we easily
 show that both sides of the last equation is equal to $\dim R_{(f-f')}+\dim
 R_{(f-g')}+\dim R_{(g-f')}+\dim R_{(g-g')}$ and we get what we want, i.e. 
\begin{equation} \label{mainthi}
 \delta_X^{n+1}(-n-3) - \dim_{(X)} \HH_{\gamma,\rho} = \delta_{X'}^{n+1}(-n-3)
 - \dim_{(X')} \HH_{\gamma',\rho'}
\end{equation} provided $h \ge 0$. Suppose $h<0$. Then we can start with $X'$
and link in two steps back to $X$, i.e. we get an even liaison with
$h' = f+g-f'-g' \ge 0$ in which case we know that \eqref{mainthi} holds. Hence
\eqref{mainthi}  is proved in general.

To show (ii) of the Theorem we only need to remark that, due to
Theorem~\ref{t4.1}, \eqref{hetta} holds if we replace $\dim_{(X)} \HH_{\gamma,
  \rho}$ and $\dim_{(X')} \HH_{\gamma',\rho'}$ by the dimension of their
tangent spaces $\EExt 0R1 (I_X, I_X)_\rho$ and $\EExt 0R1
(I_{X'},I_{X'})_{\rho'}$ respectively. With the proof of Theorem~\ref{maint4.1}
(i) above, we therefore get \eqref{mainthi} with the mentioned replacements,
i.e. we get Theorem~\ref{maint4.1} (ii).

Finally Theorem~\ref{maint4.1} (iii) follows by combining (i) and (ii) since
e.g. the smoothness of $\HH_{\gamma, \rho}$ at $(X)$ is equivalent to
$\dim_{(X)} \HH_{\gamma, \rho} = \dim \EExt 0R1 (I_X, I_X)_\rho$.
\end{proof}

\begin{corollary}\label{thmeulerICMhigher}
  Let $X$ be an equidimensional lCM codimension 2 subschemes of $\proj{n+2}$,
  and suppose $(X)$ be a generic point of a generically smooth component $V$
  of $ {\rm Hilb}^{p(v)}(\proj{n+2})$. Then $ \ {\rm sumext}(X) = {\rm
    obsumext}(X) $ and 
\begin{equation*}
  \dim V =  1 + \delta_X^{n+1}(-n-3)- {\rm sumext}(X).
\end{equation*}
\end{corollary}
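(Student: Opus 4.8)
The plan is to deduce everything from Theorem~\ref{maint4.1}(iii), which already records that $ {\rm sumext}(X) \le {\rm obsumext}(X)$, with equality precisely when $\HH_{\gamma,\rho}$ is smooth at $(X)$. Since the whole linkage content is carried by that theorem, the only substantive point left is to verify that, at a generic point $(X)$ of a generically smooth component $V$, the scheme $\HH_{\gamma,\rho}$ is smooth and has local dimension equal to $\dim V$; the dimension formula will then follow by unwinding the definition $ {\rm obsumext}(X)= 1+ \delta_X^{n+1}(-n-3) - \dim_{(X)} \HH_{\gamma,\rho}$. This is the higher-dimensional analogue of the final part of the proof of Theorem~\ref{thmeulerICM}, and I would run essentially the same argument.

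First I would fix the postulation $\gamma$ and deficiency $\rho$ of $X$, so that $(X) \in \HH_{\gamma,\rho} \subseteq {\rm Hilb}^{p(v)}(\proj{n+2})$ by definition. Because $V$ is generically smooth and $(X)$ is generic in $V$, the Hilbert scheme is smooth at $(X)$, so $\dim T_{(X)} {\rm Hilb}^{p(v)}(\proj{n+2}) = \dim_{(X)} {\rm Hilb}^{p(v)}(\proj{n+2}) = \dim V$. By upper semicontinuity of $h^i(\sI_X(v))$, the locus where every $h^i(\sI(v))$ attains the minimal value it has at the generic point $(X)$ is open; since $(X)$ is generic in $V$, this locus contains a neighborhood of $(X)$ in $V$. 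Hence such a neighborhood lies set-theoretically in $\HH_{\gamma,\rho}$, which gives $\dim_{(X)} \HH_{\gamma,\rho} \ge \dim V$. (When $X$ has seminatural cohomology this containment is immediate by Remark~\ref{seminat}.)

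Next I would combine this with the tangent space. The tangent space of $\HH_{\gamma,\rho}$ at $(X)$ is $\EExt 0R1 (I_X, I_X)_\rho$ (the generalization of \eqref{tansp} recorded in section~7; cf.\ Proposition~\ref{dp3.1} in the surface case), and it sits inside $T_{(X)} {\rm Hilb}^{p(v)}(\proj{n+2})$. Hence the chain
\begin{equation*}
 \dim V \le \dim_{(X)} \HH_{\gamma,\rho} \le \dim \EExt 0R1 (I_X, I_X)_\rho \le \dim T_{(X)} {\rm Hilb}^{p(v)}(\proj{n+2}) = \dim V
\end{equation*}
forces all inequalities to be equalities. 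In particular $\dim_{(X)} \HH_{\gamma,\rho} = \dim \EExt 0R1 (I_X, I_X)_\rho$, so $\HH_{\gamma,\rho}$ is smooth at $(X)$, and Theorem~\ref{maint4.1}(iii) yields $ {\rm sumext}(X) = {\rm obsumext}(X)$. Moreover $\dim_{(X)} \HH_{\gamma,\rho} = \dim V$, so substituting into the definition of $ {\rm obsumext}(X)$ gives $\dim V = 1 + \delta_X^{n+1}(-n-3) - {\rm obsumext}(X) = 1 + \delta_X^{n+1}(-n-3) - {\rm sumext}(X)$, as claimed.

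The \emph{main obstacle} I anticipate is the bookkeeping in the semicontinuity step: one must be certain that ``generic in $V$'' forces constant cohomology throughout a whole neighborhood of $(X)$ \emph{within $V$} (so that $V$ locally embeds in $\HH_{\gamma,\rho}$), rather than only along a proper subvariety; here the integrality of $h^i$ and the openness of the minimal-value locus are what make it work. Once this is in place, the tangent-space sandwich is purely formal and requires no further computation, since the linkage-theoretic heavy lifting has already been done in Theorem~\ref{maint4.1}.
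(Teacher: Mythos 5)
Your proof is correct and follows essentially the same route as the paper's: the paper likewise reduces the statement to showing that, at a generic point of a generically smooth component, the constant-cohomology stratum $\HH_{\gamma,\rho}$ fills up a neighborhood of $(X)$, hence is smooth there of dimension $\dim V$, and then concludes by Theorem~\ref{maint4.1} together with the definition of ${\rm obsumext}(X)$. The only cosmetic difference is that the paper (arguing as at the end of the proof of Theorem~\ref{thmeulerICM}) deduces smoothness of the stratum from a local isomorphism $\HH_{\gamma,\rho} \cong {\rm Hilb}^{p(v)}(\proj{n+2})$ at $(X)$, whereas you obtain it via the tangent-space dimension sandwich
\begin{equation*}
\dim V \le \dim_{(X)} \HH_{\gamma,\rho} \le \dim \EExt 0R1 (I_X, I_X)_\rho \le \dim T_{(X)} {\rm Hilb}^{p(v)}(\proj{n+2}) = \dim V ,
\end{equation*}
which is the same genericity/semicontinuity idea packaged slightly differently.
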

\begin{proof}
  Arguing as the last part of the proof of Theorem~\ref{thmeulerICM}, we get
  that $\HH_{\gamma, \rho}$ is isomorphic to $ {\rm Hilb}^{p(v)}(\proj{n+2})$
  at $(X)$. Hence $\HH_{\gamma, \rho}$ is smooth at $(X)$. Then we
  conclude by Theorem~\ref{maint4.1}.
\end{proof}

\begin{corollary} \label{corp1.3} Let $X$ be a surface in $\proj{4}$. If the
  local deformation functors $Def(M_i)$ of $M_i$ are formally smooth (for
  instance if $\EExt 0R2 (M_i, M_i) = 0$) for $i = 1,2$, and if $ \EExt 0R3
  (M_2, M_1) = 0 $, then
\begin{equation*}
 \ {\rm sumext}(X) = {\rm obsumext}(X).
\end{equation*}
\end{corollary}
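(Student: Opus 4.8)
The plan is simply to combine two results already established in the paper. The key observation is that the hypotheses of this corollary are verbatim those of Corollary~\ref{p1.3}: the local deformation functors $Def(M_i)$ are assumed formally smooth for $i=1,2$, and $\EExt 0R3 (M_2,M_1)=0$. Nothing more needs to be assumed, so the entire content is to route the conclusion of Corollary~\ref{p1.3} into the smoothness criterion of Theorem~\ref{maint4.1}(iii).

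First I would note that a surface $X$ in $\proj{4}$ is exactly the case $n=2$ of an equidimensional lCM codimension 2 subscheme of $\proj{n+2}$, so that Theorem~\ref{maint4.1} applies to $(X)$ and the invariants ${\rm sumext}(X)$ and ${\rm obsumext}(X)$ appearing in the statement are precisely the ones defined there. Applying Corollary~\ref{p1.3} under the present hypotheses then yields directly that $\HH_{\gamma,\rho}$ is smooth at $(X)$.

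Next I would invoke Theorem~\ref{maint4.1}(iii), which asserts that $ {\rm sumext}(X)\le{\rm obsumext}(X)$, with equality if and only if $\HH_{\gamma,\rho}$ is smooth at $(X)$. Combining this equivalence with the smoothness just obtained gives the desired equality ${\rm sumext}(X)={\rm obsumext}(X)$ at once. The proof is therefore a formal consequence of the unconditional smoothness statement of Corollary~\ref{p1.3} together with the smoothness criterion of Theorem~\ref{maint4.1}(iii); the only point to verify is that the two hypothesis sets coincide, which they do, so there is no genuine obstacle here.
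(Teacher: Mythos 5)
Your proposal is correct and is exactly the paper's own argument: the paper also applies Corollary~\ref{p1.3} under these (identical) hypotheses to get smoothness of $\HH_{\gamma,\rho}$ at $(X)$, and then concludes via the equivalence in Theorem~\ref{maint4.1}(iii).
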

\begin{proof}
  By Corollary~\ref{p1.3} we get that $\HH_{\gamma, \rho}$ is smooth at
  $(X)$ and we conclude by Theorem~\ref{maint4.1} {\rm (iii)}.
\end{proof}

\begin{corollary} \label{themACM} Let $X$ be an arithmetically Cohen-Macaulay
  codimension 2 subschemes of $\proj{n+2}$. Then $ {\rm
    sumext(X)}= {\rm obsumext}(X) = 0$. Moreover,  \\[-3mm]

  {\rm (i)} if $n > 0$, then $X$ is unobstructed and $$ \dim_{(X)} {\rm
    Hilb}^{p(v)}( \proj{n+2}) = 1 + \delta_X^{n+1}(-n-3) =1-\delta_X^0(0) =
  \chi(\sN_X) + (-1)^n \delta_X^0(-n-3),$$
  \\[-8mm]

  {\rm (ii)} if $n = 0$, then $\HH_{\gamma}$ is smooth at $(X)$ and $$
  \dim_{(X)}\HH_{\gamma} = 1 + \delta_X^{1}(-3)
  =1-\delta_X^0(0)=h^0(\sN_X)+\delta_X^0(-3).$$

\end{corollary}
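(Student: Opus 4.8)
The plan is to combine the biliaison invariance of Theorem~\ref{maint4.1} with the vanishing of the deficiency modules. Writing $M_i = H_*^i(\sI_X)$, the hypothesis that $X$ is arithmetically Cohen--Macaulay means $M_i = 0$ for $1 \le i \le n$, hence $h^i(\sI_X(v)) = 0$ for all $v$ whenever $1 \le i \le n$, and therefore $\delta_X^m(v) = 0$ for $1 \le m \le n$ and every $v$. The first objective is to prove $ {\rm sumext}(X) = {\rm obsumext}(X) = 0$; the dimension statements will then follow by identifying $\HH_{\gamma,\rho}$ with $ {\rm Hilb}^{p(v)}(\proj{n+2})$ at $(X)$ and reading off $\dim_{(X)} \HH_{\gamma,\rho} = 1 + \delta_X^{n+1}(-n-3)$.

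First I would reduce to a complete intersection. By Gaeta's theorem $X$ lies in the liaison class of a complete intersection, and since a link of an ACM subscheme is again ACM while the complete intersections form a single even liaison class --- the one with vanishing deficiency modules, equivalently trivial Horrocks data $(0,0,0)$, cf.\ the classification recalled in the introduction --- the subscheme $X$ is in fact evenly linked to a complete intersection $Y$. As $ {\rm sumext}$ and $ {\rm obsumext}$ are biliaison invariants by Theorem~\ref{maint4.1}(i),(ii), it suffices to evaluate them at $Y$. For a complete intersection $Y$ of type $(f,g)$ the Koszul resolution is completely explicit, $Y$ is unobstructed with $\sN_Y \cong \sO_Y(f) \oplus \sO_Y(g)$, and, $Y$ being itself ACM, the reasoning of the next paragraph gives $\HH_{\gamma,\rho} \cong {\rm Hilb}^{p(v)}(\proj{n+2})$ smooth at $(Y)$; a direct count then yields $\dim_{(Y)} \HH_{\gamma,\rho} = h^0(\sO_Y(f)) + h^0(\sO_Y(g)) = 1 + \delta_Y^{n+1}(-n-3)$, so $ {\rm obsumext}(Y) = 0$ and, by smoothness and Theorem~\ref{maint4.1}(iii), $ {\rm sumext}(Y) = {\rm obsumext}(Y) = 0$. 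Transporting these values back along the biliaison gives $ {\rm sumext}(X) = {\rm obsumext}(X) = 0$, whence $\HH_{\gamma,\rho}$ is smooth at $(X)$ by Theorem~\ref{maint4.1}(iii).

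For part (i), with $n > 0$, I would then promote this to the full Hilbert scheme. The vanishing $M_1 = 0$ gives $\HHom 0R (I,M_1) = 0$, so $\HH_\gamma \cong {\rm Hilb}^{p(v)}(\proj{n+2})$ at $(X)$ (the higher-dimensional form of \cite{K79}, Rem.~3.7, cf.\ \cite{W1}), exactly as in Proposition~\ref{them}; moreover the maps $\varphi_i$ cutting out the tangent space of $\HH_{\gamma,\rho}$ each have source or target a vanishing $M_i$, so $\EExt 0R1 (I,I)_\rho = \EExt 0R1 (I,I)$ and the embedding $\HH_{\gamma,\rho} \hookrightarrow \HH_\gamma$ is an isomorphism on tangent spaces. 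Since $\HH_{\gamma,\rho}$ is already smooth at $(X)$, this embedding is \'etale and universally injective, hence a local isomorphism (as in Proposition~\ref{dp3.1}). Thus $X$ is unobstructed and $\dim_{(X)} {\rm Hilb}^{p(v)}(\proj{n+2}) = \dim_{(X)} \HH_{\gamma,\rho} = 1 + \delta_X^{n+1}(-n-3)$. The remaining identities in the displayed chain, rewriting this as $1 - \delta_X^0(0)$ and as the $\chi(\sN_X)$-expression, are purely cohomological: they are the ACM specialization of the codimension-2, dimension-$n$ analogue of Proposition~\ref{dp0.3}, obtained by applying $\HHom 0R (-,I)$ to \eqref{res3cod} and feeding the outcome through the spectral sequence \eqref{mspect}, the duality \eqref{mduality} and the exact sequence \eqref{seq}, the vanishing $\delta_X^m \equiv 0$ for $1 \le m \le n$ collapsing the alternating Ext- and duality-sums to the stated shape.

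For part (ii), with $n = 0$, there is no $\rho$ and $\HH_{\gamma,\rho} = \HH_\gamma$; here I would invoke \cite{K98}, Prop.~1.7 (the $n=0$ input already used in the proof of Theorem~\ref{t4.1}) for the smoothness of $\HH_\gamma$ at $(X)$ and for $\dim_{(X)} \HH_\gamma = \EExt 0R1 (I,I)$, after which the same Euler-characteristic and duality identities give the chain of equalities, now with $\chi(\sN_X) = h^0(\sN_X)$ since $\sN_X$ is supported in dimension $0$. I expect the genuine obstacle to be the general-$n$ cohomological identity underlying the third paragraph, together with the parallel explicit count on the Koszul resolution of $Y$: as the authors already remark for surfaces, the spectral sequence \eqref{mspect} acquires further non-vanishing terms as $n$ grows, so the bookkeeping that collapses the alternating sums needs care, whereas the even-liaison reduction and the appeals to Theorem~\ref{maint4.1} are by comparison formal.
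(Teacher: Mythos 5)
Your skeleton is the paper's: Gaeta's theorem, evaluation of the invariants at a complete intersection $Y$, transport via Theorem~\ref{maint4.1}, and the local identification of $\HH_{\gamma,\rho}$ with ${\rm Hilb}^{p(v)}(\proj{n+2})$ at ACM points; your explicit observation that the liaison class of a CI coincides with its even liaison class (so that Gaeta really feeds into a \emph{biliaison} invariant) is a point the paper leaves implicit, and it is welcome. But there is a genuine circularity at your anchor step. To get ${\rm obsumext}(Y)=0$ you need $\dim_{(Y)}\HH_{\gamma,\rho}=1+\delta_Y^{n+1}(-n-3)$, and you justify ``$\HH_{\gamma,\rho}\cong {\rm Hilb}^{p(v)}(\proj{n+2})$ smooth at $(Y)$'' by ``the reasoning of the next paragraph''. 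That reasoning, however, takes the smoothness of $\HH_{\gamma,\rho}$ at the point as an \emph{input}: it is what makes the immersion $\HH_{\gamma,\rho}\hookrightarrow\HH_\gamma$ \'etale. Equality of tangent spaces plus smoothness of the ambient scheme is not enough (a cuspidal plane curve has full tangent space at the cusp inside the smooth plane without being the plane locally). At $(X)$ you obtain the needed smoothness from the biliaison transport, but at $(Y)$ the transport is not yet available --- it is exactly what the computation at $(Y)$ is supposed to launch. So, as written, you assume at $(Y)$ what you are trying to prove there.

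The gap is closable, and the paper closes it with two ingredients you never use. First, at the complete intersection one only needs the \emph{tangent-space} invariant: by \eqref{tansp} and \eqref{mseq}, $\EExt 0R1 (I_Y, I_Y)_\rho = \EExt 0R1 (I_Y, I_Y) = h^0(\sN_Y)$, and your Koszul/duality count gives $h^0(\sN_Y)=1+\delta_Y^{n+1}(-n-3)$; hence ${\rm sumext}(Y)=0$ with no appeal to the scheme structure of $\HH_{\gamma,\rho}$ at $(Y)$, and Theorem~\ref{maint4.1}(ii) transports this to ${\rm sumext}(X)=0$. Second, the smoothness of $\HH_{\gamma,\rho}$ at $(X)$ (needed to upgrade to ${\rm obsumext}(X)=0$ via part (iii), and to give unobstructedness) should not come from the transport at all: since $X$ is ACM of codimension $2$, the ideal $I_X$ has projective dimension $1$, so $\eext 0Ri (I_X,I_X)=0$ for $i\ge 2$; this kills the obstruction space of $\HH_\gamma$, and via the identifications $\HH_{\gamma,\rho}\cong\HH_\gamma\cong{\rm Hilb}^{p(v)}(\proj{n+2})$ at $(X)$ --- which the paper cites in full from \cite{El} and \cite{K79}, not merely the half $\HH_\gamma\cong{\rm Hilb}^{p(v)}(\proj{n+2})$ that you invoke (Ellingsrud's theorem is also what would legitimately repair your claim at $(Y)$, since nearby deformations of an ACM subscheme with $n\ge 1$ stay ACM with constant cohomology) --- it yields smoothness everywhere you need it, as well as the remaining equalities $\eext 0R1 (I_X,I_X)=1-\delta_X^0(0)=\chi(\sN_X)+(-1)^n\delta_X^0(-n-3)$ through the paper's identity \eqref{1delta}. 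The same projective-dimension-one vanishing also settles $n=0$ directly, making your appeal to \cite{K98} unnecessary.
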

\begin{proof}
  By Gaeta's theorem (\cite{Ga}, \cite{Ga2}, cf.\! \cite{Ap2}) $X$ is in the
  liaison class of a complete intersection $Y$. Suppose $n > 0$. Then $
  \HH_{\gamma,\rho} \cong \HH_{\gamma} \cong {\rm Hilb}^{p(v)}( \proj{n+2})$
  at $(X)$ by \cite{El} or \cite{K79}, Rem.\! 3.7, (cf. \cite{W1}, Thm.\! 2.1).
  Thanks to Theorem~\ref{maint4.1} it suffices to show that $ {\rm
    sumext(Y)}=0$, or equivalently that $ \dim \EExt 0R1 (I_Y, I_Y)_\rho = 1 +
  \delta_Y^{n+1}(-n-3)$. By definition, cf.\! \eqref{tansp}, and \eqref{mseq},
  $ \EExt 0R1 (I_Y, I_Y)_\rho = \EExt 0R1 (I_Y, I_Y) = h^0(\sN_Y)$ and it is
  trivial to show $ h^0(\sN_Y) = 1 + \delta_Y^{n+1}(-n-3)$ by using duality
  and the minimal resolution of $I_Y$.

  Moreover note that for {\it any} equidimensional lCM codimension 2
  subschemes $X$ of $\proj{n+2}$, we easily show
  \begin{equation} \label{1delta} \sum_{i=1}^{n+1} \eext 0Ri (I_X,I_X) = 1 -
    \delta_X^0(0) = \chi(\sN_X) + (-1)^n \delta_X^0(-n-3).
  \end{equation} as in
  Proposition~\ref{dp0.3} (see the first sentence of the proof for the left
  equality and second and third sentence of  the proof for the right
  equality).  
  Hence if $X$ is arithmetically Cohen-Macaulay we get $ \eext 0Ri (I_X,I_X) =
  0$ for $i \ge 2$ and we are done in the case $n > 0$. The case $n=0$ is
  similar and easier.
\end{proof}

\begin{remark}
  Corollary~\ref{themACM} coincides with \cite{El} if $n > 0$, and with
  \cite{Got} and \cite{KM}, Rem.\! 4.6 if $n=0$.
\end{remark}

\begin{example} \label{e5.3} Let $X$ be the smooth rational surface of
  $\HH(11,0,11)$ of Example~\ref{e3.10}. Note that $X$ has seminatural
  cohomology and hence we have $\HH_{\gamma,\rho} \cong \HH(d,p,\pi)$ at $(X)$
  by Remark~\ref{seminat}. Moreover $I = I_X$ admits a minimal resolution
\begin{equation} \label{sumextex1}
  0 \to R(-9) \to R(-8)^{\oplus 3} \oplus R(-7)^{\oplus 3} \to R(-7)^{\oplus
    2} \oplus
  R(-6)^{\oplus 12} \to R(-5)^{\oplus 10} \to I \to 0 .
\end{equation}
By Example~\ref{e3.10} we conclude that $\HH_{\gamma,\rho} \cong \HH(d,p,\pi)$
is smooth at $(X)$ and $\dim_{(X)} \HH(d,p,\pi) = 41$. However, since $X$ is
rational we obviously get $ 1+ \delta_X^{3} (-5)=1$ from \eqref{sumextex1}. By
Theorem~\ref{maint4.1} we find $ {\rm sumext}(X)= {\rm obsumext}(X) = -40$.
Now we link twice to get $X'$, first using a CI of type $(5,5)$, then a CI of
type $(5,6)$, both times using a common hypersurface of degree 5. Looking to
\eqref{Eres} we find a free resolution of $I'=I_{X'}$ of the form {\small
\begin{equation} \label{sumextex11} 0 \to R(-10) \to R(-9)^{\oplus 3} \oplus
  R(-8)^{\oplus 3} \to R(-8)^{\oplus 2} \oplus R(-7)^{\oplus 12} \oplus R(-6)
  \to R(-6)^{\oplus 10} \oplus R(-5) \to I' \to 0 .
\end{equation}
} By \eqref{29codim2}, $h^2(\sO_{X'})= 15$ and $h^2(\sO_{X'}(1))= 1$ and we get
$1+ \delta_{X'}^{3}(-5)=25$. It follows from Theorem~\ref{maint4.1} and
Proposition~\ref{dp3.1} that $\HH_{\gamma',\rho'} \cong \HH(d',p',\pi')$ is
smooth at $(X')$ of dimension $1+ \delta_{X'}^{3}(-5)- {\rm sumext}(X)= 65$.
Compare with Examples~\ref{e4.3} and \ref{e4.4}.
\end{example}
Before considering examples of 3-folds, we want to generalize some of
the results of section 4. For recent papers on the Hilbert scheme of
3-folds, see \cite{BF} and its references. See also \cite{DPo} for a long list
of examples of 3-folds of non general type.

\begin{proposition} \label{themhigher} Let $X$ be an equidimensional lCM
  codimension 2 subschemes of $\proj{n+2}$, let $M_i = H_*^i(\sI_X)$ for i =
  1,...,n and $I=I_X$ and suppose $$ {_{0}\!\Hom_R}(I, M_1)= 0 \ \ {\rm and}\
  \ \EExt {-n-3}R{n-j}(I, M_j)=0 \ {\rm for \ every \ } j , \ 1 \le j \le n.$$
  Then $ \EExt 0R2 (I,I)=0$, $X$ is unobstructed and $$ \dim_{(X)} {\rm
    Hilb}^{p(v)}(\proj{n+2}) = \eext 0R1 (I,I).$$ E.g. let $ \dim X = 3$. Then
  $X$ is unobstructed and \ $ \dim_{(X)} {\rm Hilb}^{p(v)}(\proj{5}) = \eext
  0R1 (I,I)$ \ if, for every $i$, 
\begin{equation} \label{ext20}  \qquad H^1(\sI_X(n_{1,i}))= H^3(\sI_X
  (n_{1,i}-6)) =0, \ H^2(\sI_X (n_{2,i}-6)) =0, \ {\rm and} \ H^1(\sI_X
  (n_{3,i}-6))=0. 
\end{equation} If in addition 
\begin{equation} \label{extj0} \ H^2(\sI_X (n_{1,i}-6)) =0, \ H^1(\sI_X
  (n_{2,i}-6))=0 \ \ {\rm and} \ \ H^1(\sI_X (n_{1,i}-6))=0,
\end{equation} then $ \quad
\dim_{(X)} {\rm Hilb}^{p(v)}(\proj{5}) = 1 - \delta_X^0(0)= \chi(\sN_X)
-\delta_X^0(-6).$ 
\end{proposition}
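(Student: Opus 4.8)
The plan is to combine the dimension formula $\dim_{(X)}{\rm Hilb}^{p(v)}(\proj{5})=\eext 0R1(I,I)$, already available under \eqref{ext20} by the first part of the Proposition, with the Euler identity \eqref{1delta}. For $n=3$ the latter expresses $1-\delta_X^0(0)=\chi(\sN_X)-\delta_X^0(-6)$ as a combination of the four numbers $\eext 0Ri(I,I)$, $1\le i\le 4$, in which the term $i=1$ occurs with coefficient $+1$. Since the first part also gives $\EExt 0R2(I,I)=0$, it suffices to prove that the additional hypothesis \eqref{extj0} forces $\EExt 0R3(I,I)=\EExt 0R4(I,I)=0$; once the three groups with $i\ge 2$ vanish, \eqref{1delta} collapses to $\eext 0R1(I,I)=1-\delta_X^0(0)$, the asserted dimension.

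To reach $\EExt 0Ri(I,I)$ for $i=3,4$ I would dualize by \eqref{mduality} (with $n+3=6$), obtaining $\EExt 0R3(I,I)\cong\EExt {-6}{\mfm}{3}(I,I)^\vee$ and $\EExt 0R4(I,I)\cong\EExt {-6}{\mfm}{2}(I,I)^\vee$, and then feed the right-hand sides into the spectral sequence \eqref{mspect}, $E_2^{p,q}=\EExt {-6}Rp(I,H_\mfm^q(I))\Rightarrow\EExt {-6}{\mfm}{p+q}(I,I)$. Since $I=H_*^0(\sI_X)$ is saturated we have $H_\mfm^0(I)=H_\mfm^1(I)=0$, while $H_\mfm^2(I)\cong M_1$ and $H_\mfm^3(I)\cong M_2$. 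Hence the only $E_2$-terms of total degree $2$ and $3$ are $E_2^{0,2}=\HHom {-6}R(I,M_1)$ in degree $2$, and $E_2^{1,2}=\EExt {-6}R1(I,M_1)$ together with $E_2^{0,3}=\HHom {-6}R(I,M_2)$ in degree $3$. As each $E_\infty^{p,q}$ is a subquotient of $E_2^{p,q}$, the vanishing of these three groups yields $\EExt {-6}{\mfm}{2}(I,I)=\EExt {-6}{\mfm}{3}(I,I)=0$, and therefore $\EExt 0R3(I,I)=\EExt 0R4(I,I)=0$.

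It remains to extract these three module vanishings from \eqref{extj0}. Applying $\HHom vR(-,M_j)$ to the minimal resolution \eqref{res3cod} and using $\Hom_R(R(-a),M_j)=M_j(a)$, the free term $\bigoplus_i R(-n_{k,i})$ contributes $\bigoplus_i(M_j)_{n_{k,i}+v}=\bigoplus_i H^j(\sI_X(n_{k,i}+v))$, so $\EExt vR{k-1}(I,M_j)$ is a subquotient of $\bigoplus_i H^j(\sI_X(n_{k,i}+v))$. Taking $v=-6$: the condition $H^1(\sI_X(n_{1,i}-6))=0$ gives $\HHom {-6}R(I,M_1)=0$, the condition $H^2(\sI_X(n_{1,i}-6))=0$ gives $\HHom {-6}R(I,M_2)=0$, and the condition $H^1(\sI_X(n_{2,i}-6))=0$ gives $\EExt {-6}R1(I,M_1)=0$, which are exactly the three groups above. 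Substituting $\EExt 0R2(I,I)=\EExt 0R3(I,I)=\EExt 0R4(I,I)=0$ into \eqref{1delta} then gives $\dim_{(X)}{\rm Hilb}^{p(v)}(\proj{5})=1-\delta_X^0(0)=\chi(\sN_X)-\delta_X^0(-6)$, as claimed.

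The step I expect to demand the most care is the correct accounting of the spectral sequence: using the saturatedness of $I$ and the identifications $H_\mfm^2(I)\cong M_1$, $H_\mfm^3(I)\cong M_2$, one must observe that in total degrees $2$ and $3$ only the columns $q=2,3$ arise, so that only $M_1$ and $M_2$ enter — never $M_3\cong H_\mfm^4(I)$ nor the non-finite-length $H_\mfm^5(I)$, which surface only in total degree $\ge 4$. This is exactly why \eqref{extj0} constrains only $H^1$ and $H^2$. The vanishing itself needs no analysis of the differentials, since $E_\infty^{p,q}$ is a subquotient of $E_2^{p,q}$; the index bookkeeping in the duality \eqref{mduality} and the subquotient bound coming from the resolution are then routine.
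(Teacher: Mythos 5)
Your treatment of the final claim (the dimension formula under \eqref{extj0}) is correct, and it is in fact exactly the paper's route: dualize by \eqref{mduality} to get $\EExt 0R3 (I,I) \cong \EExt {-6}{\mfm}{3}(I,I)^{\vee}$ and $\EExt 0R4 (I,I) \cong \EExt {-6}{\mfm}{2}(I,I)^{\vee}$, observe that in the spectral sequence \eqref{mspect} the only $E_2$-terms of total degree $2$ and $3$ are $\HHom {-6}R (I,M_1)$, $\EExt {-6}R1 (I,M_1)$ and $\HHom {-6}R (I,M_2)$ (since $H_\mfm^0(I)=H_\mfm^1(I)=0$ and $H_\mfm^{j+1}(I)\cong M_j$), and kill these three groups by the subquotient bound coming from the minimal resolution \eqref{res3cod} applied with the three vanishings of \eqref{extj0}; then \eqref{1delta} collapses to the $i=1$ term. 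All of this bookkeeping is accurate.

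As a proof of the Proposition, however, the proposal has a genuine gap: it proves only the last sentence, while the first two claims --- the general-$n$ statement ($\EExt 0R2 (I,I)=0$, $X$ unobstructed, $\dim_{(X)}{\rm Hilb}^{p(v)}(\proj{n+2})=\eext 0R1 (I,I)$) and its $n=3$ specialization under \eqref{ext20} --- are assumed rather than proved (``already available \dots by the first part of the Proposition''). Since these are part of the statement, they cannot be invoked; and they contain the one idea your argument never supplies. The groups $\EExt 0R1 (I,I)$ and $\EExt 0R2 (I,I)$ are tangent and obstruction spaces for \emph{graded deformations of $I$}, i.e.\ for the constant-postulation Hilbert scheme $\HH_{\gamma}$, not for ${\rm Hilb}^{p(v)}(\proj{n+2})$ itself; the paper therefore begins by using ${_{0}\!\Hom_R}(I, M_1)=0$ to get $\HH_{\gamma} \cong {\rm Hilb}^{p(v)}(\proj{n+2})$ at $(X)$, citing \cite{K79}, Rem.\! 3.7 (cf.\ \cite{W1}, Thm.\! 2.1). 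Without this comparison, no Ext-vanishing yields any conclusion about smoothness or dimension of the Hilbert scheme. The remaining missing pieces are ones your toolkit handles verbatim: $\EExt 0R2 (I,I) \cong \EExt {-n-3}{\mfm}{n+1}(I,I)^{\vee}$ vanishes by the same spectral-sequence argument in total degree $n+1$, where precisely the finite-length modules $M_j$, $1\le j\le n$, occur in the terms $\EExt {-n-3}R{n-j}(I, M_j)$ matching the hypotheses; and \eqref{ext20} implies those hypotheses (plus ${_{0}\!\Hom_R}(I, M_1)=0$) by the same subquotient bound from \eqref{res3cod}. A complete proof must include the comparison $\HH_{\gamma} \cong {\rm Hilb}^{p(v)}(\proj{n+2})$ and these two verifications.
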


\begin{proof} Thanks to \cite{K79}, Rem.\! 3.7 (cf. \cite{W1}, Thm.\! 2.1), the
  Hilbert scheme $\HH_{\gamma}$ of constant postulation is isomorphic to $
  {\rm Hilb}^{p(v)}(\proj{n+2})$ at $(X)$ provided $ {_{0}\!\Hom_R}(I,M_1) =
  0$. By \eqref{mduality} we get $ \EExt 0R2 (I,I)=0$ provided
  ${_{-n-3}\!\Ext_{\mathfrak m}^{n+1}}(I,I)=0$. By \eqref{mspect} and $M_j
  \cong H_{\mathfrak m}^{j+1}(I)$ we deduce the vanishing of the latter from
  the assumptions of the proposition. It follows that $\HH_{\gamma}$ is smooth
  at $(X)$ of dimension $ \eext 0R1 (I,I)$. 

  Suppose $n = 3$. By the definition of $ \EExt vR{\bullet} (I,-)$ and
  \eqref{res3cod} we easily prove the vanishing of all $ \EExt {}R{\bullet}
  (I,-)$-groups of the first part of the proposition from the explicit
  vanishings in \eqref{ext20}. Moreover due \eqref{1delta}, to get the final
  formula it suffices to show $ \EExt 0Rj (I,I)=0$ for $j=3,4$. By
  \eqref{mduality} we must prove ${_{-n-3}\!\Ext_{\mathfrak m}^{n-j}}(I,I)=0$
  for $j=0,1$. This is shown in exactly the same way as we did for
  ${_{-n-3}\!\Ext_{\mathfrak m}^{n+1}}(I,I)=0$, i.e. by using \eqref{mspect}
  and \eqref{res3cod} and we are done.
\end{proof}
\begin{remark} \label{HiNX} {\rm (i)} We can also generalize Remark~\ref{H1NX}
  to equidimensional lCM codimension 2 subschemes $X \subseteq \proj{n+2}$ of
  higher dimension. Indeed using \eqref{mseq}, \eqref{mspect} and
  \eqref{mduality}, see the proof above, we get $H^1(\sN_X)=0$ provided $
  \EExt 0{\mathfrak m}3(I,I)=0$ and ${_{-n-3}\!\Ext_{\mathfrak
      m}^{n+1}}(I,I)=0$, e.g. provided
  $$ \EExt {0}R{j}(I, M_{2-j})= 0 \ {\rm for} \ 0 \le j \le 1 \ \ {\rm and} \
  \ \EExt {-n-3}R{n-j}(I, M_j)=0 \ {\rm for \ every} \ 1 \le j \le n.$$
Similarly  $H^2(\sN_X)=0$ provided
  $ \EExt 0{\mathfrak m}4(I,I)=0$ and ${_{-n-3}\!\Ext_{\mathfrak
      m}^{n}}(I,I)=0$, e.g. provided
  $$ \EExt {0}R{j}(I, M_{3-j})= 0 \ \ {\rm for} \ 0 \le j \le 2 \ \ {\rm and} \
  \ \EExt {-n-3}R{n-j}(I, M_{j-1})=0 \ \ {\rm for} \ 2 \le j \le n.$$ We can
  in this way easily get a vanishing criteria for $H^q(\sN_X)=0$ for every $q
  \ge 1$. \\[1mm]
  {\rm (ii)} Suppose for instance $n = \dim X = 3$. Then $H^1(\sN_X)=0$ if,
  for every $i$, {\small $$ H^1(\sI_X (n_{2,i}))= H^2(\sI_X (n_{2,i}-6)) =0, \
    H^2(\sI_X(n_{1,i}))= H^3(\sI_X (n_{1,i}-6)) =0 \ {\rm and} \ H^1(\sI_X
    (n_{3,i}-6))=0. $$} Moreover $H^2(\sN_X)=0$ if, for every $i$, {\small $$
    H^1(\sI_X (n_{3,i}))= 0, \ H^2(\sI_X (n_{2,i}))= H^1(\sI_X (n_{2,i}-6))=0 \
    \ {\rm and} \ \ H^3(\sI_X(n_{1,i}))= H^2(\sI_X (n_{1,i}-6)) =0. $$}
\end{remark}
As in the surface case, if some of the assumptions of
Proposition~\ref{themhigher} or Remark~\ref{HiNX} are not satisfied, we can
find examples of obstructed 3-folds (e.g. $X_0'$ in the example below). Note
that all assumptions of Proposition~\ref{themhigher}
and Remark~\ref{HiNX} (ii) are satisfied for $X_0'$, except $ H^3(\sI_{X_0'}
(n_{1,i}-6)) =0 $ for one $i$.
\begin{example} \label{3obstructed} We start with the smooth 3-fold $Z
  \subseteq \proj{}:=\proj{5}$ of \cite {ok} of degree 7 with
  $\Omega$-resolution $$0 \to \sO_{ \proj{}}^{\oplus 4} \to
  \Omega_{\proj{}}(2) \to \sI_Z(4) \to 0,$$ where $ \Omega_{\proj{}}$ is the
  kernel of the map $\sO_{\proj{}}(-1)^6 \to \sO_{\proj{}}$ induced by the
  multiplication with $(X_0,..,X_5)$. Note that $ h^1(\sI_Z (2))=1$. If we
  link $Z$, first using a CI of type $(4,4)$ to get a 3-fold $Z'$, then a CI
  of type $(6,7)$ to link $Z'$ to $X$, then $X$ is a 3-fold with properties
  such that Proposition~\ref{obstreks} applies. Indeed the ideas of
  Remark~\ref{obstreksrem} also apply except for how we proved $ \EExt 0R2
  (I,I)=0$. By the proof of Proposition~\ref{themhigher}, however, we have $
  \EExt 0R2 (I,I)=0$ for 3-folds provided $ H^3(\sI_X (n_{1,i}-6)) = \
  H^2(\sI_X (n_{2,i}-6)) = H^1(\sI_X (n_{3,i}-6))=0 $ for all $i$. To see that
  all these $ H^i(\sI_X(j))$-groups vanish, we first find the minimal
  resolution of $I_{Z'}$. Combining the exact sequence $0 \to \sO_{\proj{}}
  \to \sO_{\proj{}}(1)^6 \to \Omega_{\proj{}}^{\vee} \to 0 $ with the mapping
  cone construction for how we get the resolution of $I_{Z'}$ from the
  resolution of $I_{Z}$, we find the minimal resolution
\begin{equation*}
  0 \to R(-6) \to  R(-5)^{\oplus 6}  \to
  R(-4)^{\oplus 6} \to I_{Z'} \to 0.
\end{equation*} 
Hence $H_*^1 (\sI_{Z'})=0$, $H_*^2(\sI_{Z'})=0$ and we get $H_*^3
(\sI_{X})=0$, $H_*^2(\sI_{X})=0 $ and $H_*^1(\sI_{X}) \simeq H^1(\sI_{X}(7))
\simeq k $, cf. \eqref{29}. Now since the Koszul resolution induced by the
regular sequence $ \{ X_0,..,X_5 \} $ implies that $$ 0 \to \sO_{ \proj{}}(-6)
\to \sO_{ \proj{}}(-5)^{\oplus 6} \to \sO_{ \proj{}}(-4)^{\oplus 15} \to \sO_{
  \proj{}}(-3)^{\oplus 20} \to \sO_{ \proj{}}(-2)^{\oplus 15} \to
\Omega_{\proj{}} \to 0 $$ is exact, we can use the mapping cone construction
to find the following $\Omega$-resolution, $$0 \to \sO_{ \proj{}}(-9)^{\oplus
  6} \to \Omega_{\proj{}}(-7) \oplus \sO_{ \proj{}}(-7) \oplus \sO_{
  \proj{}}(-6) \to \sI_X \to 0$$ of $ \sI_X$, leading to the minimal resolution
 $$0 \to R(-13)
 \to R(-12)^{\oplus 6} \to R(-11)^{\oplus 15} \to ... \to I_X \to 0. $$ It
 follows that all $n_{3,i}=11$ in the minimal resolution of $I_X$ and hence we
 see that $ \EExt 0R2 (I,I)=0$.

 Then we proceed exactly as in Example~\ref{obstructed}. Indeed we link $X$ to
 two different 3-folds $X'$ and $X_0'$ using CI's $Y$ and $Y_0$ containing
 $X$, both of type $(7,9)$, as follows. Let $F_6$, resp. $F_7$, be the minimal
 generator of $ I_X$ of degree 6, resp. 7, and let $G$ be a general element of
 $H^0(\sI_{X}(9))$. Then we take $Y$, resp. $Y_0$, to be given by
 $I_Y=(F_7,G)$, resp. $I_{Y_0}=(H \cdot F_6,G)$ where $H$is a linear form. We
 may check that all assumptions of Proposition~\ref{obstreks} are satisfied.
 Hence we get that $X'$ and $X_0'$ belong to a common irreducible component of
 $ {\rm Hilb}^{p(v)}(\proj{5})$, that $X_0'$ is obstructed with minimal
 resolution 
\begin{equation*}
  0 \to R(-9) \to  R(-8)^{\oplus 6} \oplus R(-9) \oplus  R(-10) \to
  R(-7)^{\oplus 7} \oplus  R(-9) \to I_{X_0'} \to 0,
\end{equation*} 
cf.\! \eqref{Eres}, while $X'$ is unobstructed with minimal resolution
\begin{equation*}
  0 \to R(-9) \to  R(-8)^{\oplus 6} \oplus  R(-10) \to
  R(-7)^{\oplus 7} \to I_{X'} \to 0.
\end{equation*}
Again we have ``ghost terms'' in the minimal resolution of $ I_{X_0'}$. From
the resolution we find $X_0'$ to be of degree 30 and with Hilbert
polynomial $$p(v)=5v^3- \frac{67}{2}v^2+ \frac{247}{2}v-153.$$ The 3-fold
$X_0'$ is reducible.
Moreover since the assumptions of Proposition~\ref{themhigher} are satisfied
for $X'$, we also get the unobstructedness of $X'$ from that Proposition and
the dimension, $\dim_{(X')} {\rm Hilb}^{p(v)}(\proj{5})= 1 -
\delta_{X'}^0(0) = 327$.
Note that the assumptions of
Proposition~\ref{themhigher} 
are not satisfied for $X_0'$, due to the existence of a minimal
generator of degree 9 of  $ I_{X_0'}$ and the fact $h^3(\sI_{X_0'}(3))=1$.

Finally since Remark~\ref{r4.2} generalizes to 3-folds by \cite{K3}, Prop.\!
3.12, one may by linkage obtain infinitely many obstructed 3-folds in the
liaison class of $X_0'$.
\end{example}
  Finally we recall the Hilbert polynomials of $\sO_X$ and $\sN_X$ for an
  equidimensional lCM $3$-fold of degree $d$ and sectional genus $\pi$. If $S$
  is a general hyperplane section, we have an exact sequence $$ 0 \to
  \sO_X(v-1) \to \sO_X(v) \to \sO_S(v) \to 0, $$ and we easily deduce
\begin{equation} \label{hilbpolyX3} p(v):= \chi (\sO_X(v)) = \frac{1}{6} dv^3 +
  \frac{1}{2}(d+1 - \pi)v^2 +( \chi (\sO_S)+ \frac d3 + \frac {1-\pi}{2})v +
  \chi (\sO_X)
\end{equation}
from \eqref{hilbpolyX}. Moreover
\begin{proposition} \label{dp0.3higher} Let $X$ be an equidimensional lCM
  3-fold in $\proj{5}$ of degree $d$ and
  sectional genus $\pi$ and let $S$
be a general hyperplane section. Then 
\begin{equation*} \label{hilbpolyN3}
  \chi (\sN_X(v)) = \frac{1}{3} dv^3 + 3dv^2+ (2 \chi (\sO_S)+ 5(\pi-1)+ \frac
  {38}{3} d-d^2)v + (6 \chi (\sO_S)+ 15(\pi-1)+ 20d-3d^2).
\end{equation*}
\end{proposition}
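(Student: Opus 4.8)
The plan is to reduce the statement to the codimension~2 analogue of \eqref{hilbpolyNX} in $\proj{5}$, namely the identity
\begin{equation*}
  \chi(\sN_X(v)) = \chi(\sO_X(v)) - \chi(\sO_X(-v-6)) - d^2(v+3),
\end{equation*}
and then to substitute the Hilbert polynomial \eqref{hilbpolyX3} and expand. Two features distinguish this from the surface case of Proposition~\ref{dp0.3}: the middle term enters with a \emph{minus} sign, because $\proj{5}$ is odd-dimensional and Serre duality gives $\chi(\sO_{\proj{5}}(-m-6)) = -\chi(\sO_{\proj{5}}(m))$; and the ``double point'' correction $d^2(v+3)$ is now \emph{linear} in $v$, rather than the constant $d^2$ of the surface case. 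Once the displayed identity is in hand, expanding $\chi(\sO_X(v)) - \chi(\sO_X(-v-6))$ via \eqref{hilbpolyX3} produces leading coefficients $\tfrac13 d$ and $3d$, linear coefficient $\tfrac{38}{3}d + 5(\pi-1) + 2\chi(\sO_S)$ and constant $20d + 15(\pi-1) + 6\chi(\sO_S)$; subtracting $d^2(v+3)$ then inserts the $-d^2$ and $-3d^2$ into the linear and constant terms, reproducing the stated formula.

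To establish the displayed identity I would mirror, step by step, the proof of \eqref{hilbpolyNX}. Applying $\HHom vR(-,I)$ to the minimal resolution \eqref{res3cod} (with $n=3$, $R = k[X_0,\dots,X_5]$) and using $\Hom_R(I,I)\simeq R$ together with the identifications $\EExt vRi(I,I)\cong H^{i-1}(\sN_X(v))$ for $v\gg0$ coming from \eqref{mspect}, \eqref{mduality} and \eqref{seq}, one gets, exactly as in \eqref{lem23},
\begin{equation*}
  \dim R_v - \chi(\sN_X(v)) = \delta^0(v) = \sum_{j=1}^{5}(-1)^{j+1}\sum_i \chi(\sI_X(n_{j,i}+v)), \qquad v\gg0.
\end{equation*}
Splitting each term through $\chi(\sI_X) = \chi(\sO_{\proj{5}}) - \chi(\sO_X)$ and applying Serre duality on $\proj{5}$ to the projective-space part rewrites $\sum_j(-1)^{j+1}\sum_i\chi(\sO_{\proj{5}}(n_{j,i}+v))$ as $-\chi(\sI_X(-v-6))$; since $\chi(\sO_{\proj{5}}(-v-6)) = -\dim R_v$, this equals $\dim R_v + \chi(\sO_X(-v-6))$. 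Thus $\delta^0(v) = \dim R_v + \chi(\sO_X(-v-6)) - \sum_{j}(-1)^{j+1}\sum_i\chi(\sO_X(n_{j,i}+v))$, and comparing with the left-hand side gives the identity once the remaining sum is evaluated.

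The main obstacle, and the only genuinely new computation, is the evaluation of $\sum_{j}(-1)^{j+1}\sum_i\chi(\sO_X(n_{j,i}+v))$. Writing $S_k := \sum_{j=1}^{5}(-1)^{j+1}\sum_i n_{j,i}^k$, I expand $\chi(\sO_X(n_{j,i}+v))$ via \eqref{hilbpolyX3}, so that the sum becomes a combination of $S_0,\dots,S_3$. The values of these sums are forced by the fact that $\chi(\sO_X(v)) = \binom{v+5}{5} - \chi(\sI_X(v))$ has degree $3$: matching the coefficients of $v^5,v^4,v^3,v^2$ in $\chi(\sI_X(v)) = \sum_j(-1)^{j+1}\sum_i\binom{v-n_{j,i}+5}{5}$ against those of $\binom{v+5}{5}$ (the last two differing by the leading coefficients of $\chi(\sO_X)$) yields $S_0=1$, $S_1=0$, $S_2=-2d$ and $S_3=-6(2d+\pi-1)$. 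This is where the 3-fold case is strictly harder than Proposition~\ref{dp0.3}: the surface computation \eqref{nijX} stopped at $S_2$, whereas here the degree-$3$ Hilbert polynomial forces one further relation $S_3$, obtained by matching the $v^2$-coefficient, and it is precisely this extra relation (through the term $\tfrac16 d\,S_3$) that produces the $v$-dependence of the correction. Substituting these values collapses the sum to $\chi(\sO_X(v)) - d^2 v - 3d^2$, completing the identity and hence the proposition.

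Alternatively one may take a general hyperplane section $S = X\cap H \subseteq \proj{4}$, for which $\sN_{X/\proj{5}}\!\mid_S \cong \sN_{S/\proj{4}}$, so that the first difference $\chi(\sN_X(v)) - \chi(\sN_X(v-1))$ equals the known surface value $\chi(\sN_S(v))$ of \eqref{hilbpolyN}; summing recovers $\chi(\sN_X(v))$ up to a constant, which is then pinned down by the anti-symmetry $\chi(\sN_X(-v-6)) = -\chi(\sN_X(v))$, i.e. $\chi(\sN_X(-3))=0$, itself a consequence of \eqref{mduality}.
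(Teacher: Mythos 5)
Your proposal is correct and takes essentially the same route as the paper's own (sketched) proof: the paper likewise reduces to the identity $\chi(\sN_X(v)) = \chi(\sO_X(v)) - \chi(\sO_X(-v-6)) - d^2(v+3)$, derives it by following the proof of Proposition~\ref{dp0.3} with the one new ingredient being the cubic alternating power sum $\sum_{j}(-1)^{j-1}\sum_i n_{j,i}^3 = 6(1-\pi-2d)$ (your $S_3$), and then substitutes \eqref{hilbpolyX3}. Your closing hyperplane-section alternative is not in the paper, but your main argument matches the paper's step for step, merely supplying the details the paper leaves to the reader.
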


\begin{proof} Since we have no reference for this formula in this generality
  we sketch a proof. Indeed we claim that
  \begin{equation} \label{hilbpolyNX3} \chi (\sN_X(v)) = \chi (\sO_X(v)) -
    \chi (\sO_X(-v-6)) - d^2(v+3).
\end{equation}
Note that, using \eqref{hilbpolyNX3}, we get Proposition~\ref{dp0.3higher} by
combining with \eqref{hilbpolyX3}. To show \eqref{hilbpolyNX3}, we follow the
proof of Proposition~\ref{dp0.3}. In addition to the formulas in \eqref{nijX}
(where we only replace $\sum_{j=1}^4$ by $\sum_{j=1}^5$) we get
$\sum_{j=1}^5(-1)^{j-1} \sum_i n_{j,i}^3=6(1-\pi-2d).$ Then we proceed as in
\eqref{lem23}. We get $ \delta^0(v) = -\chi(\sI_{X}(-v-6))-
\chi(\sO_X(v))+(3+v)d^2$ for $v>>0$ and then the claim.
\end{proof}

\begin{example} \label{e5.4} Let $X$ be the smooth Calabi-Yau 3-fold of
  \cite{DPo}, sect.\! 6, with invariants $d=17$, $\pi= 32$, $\chi(\sO_X)=0$
  and $\chi(\sO_S)=24$, and deficiency
  modules $M_1=0$, $M_2=0$ and $ H^3(\sI_X(v))=0$ except when
   \begin{align*} h^3(\sI_X(1)) = 4, \quad h^3 (\sI_X(2)) = 2.
\end{align*}
Following \cite{DPo} we find that $I = I_X$ has the following minimal
resolution
\begin{equation} \label{sumextex31} 0 \to R(-8)^{\oplus 2} \to R(-7)^{\oplus 8}
  \to  R(-6)^{\oplus 5} \oplus R(-5)^{\oplus 2} \to I \to 0 .
\end{equation}
All assumptions of Proposition~\ref{themhigher} are satisfied and we get that
$ {\rm Hilb}^{p(v)}(\proj{5})$ is smooth at $(X)$ of dimension $$
\dim_{(X)} {\rm Hilb}^{p(v)}(\proj{5}) = 1 - \delta_X^0(0)= 82.$$
Let us compute ${\rm obsumext}(X)$. Note that $X$ has seminatural cohomology
and hence we have $\HH_{\gamma,\rho} \cong {\rm Hilb}^{p(v)}(\proj{5})$ at
$(X)$ by Remark~\ref{seminat}. Since we have $h^3(\sO_{X})= 1$ and
$h^3(\sO_{X}(-1))= 24$, it follows that ${\rm obsumext}(X)= 1+
\delta_X^{3}(-6)-82=-28$ by Theorem~\ref{maint4.1}. Now we link twice to get
$X'$, first using a CI of type $(5,6)$, then a CI of type $(5,5)$, both times
using a common hypersurface of degree 5. This is possible, cf. \cite{DPo}.
Thanks to \eqref{Eres} we find a free resolution of $I'=I_{X'}$ of the form
  \begin{equation} \label{sumextex311} 0 \to R(-7)^{\oplus 2} \to
    R(-6)^{\oplus 8} \to R(-5)^{\oplus 6} \oplus R(-4) \to I' \to 0 .
\end{equation}
By \eqref{29codim2} $h^1(\sO_{X'}(-2))= 19$ and $h^1(\sO_{X'}(-1))= 0$ and we
get $1+ \delta_{X'}^{3}(-6)=20$. It follows from Theorem~\ref{maint4.1} and
Proposition~\ref{dp3.1} that $\HH_{\gamma',\rho'} \cong {\rm
  Hilb}^{p'(v)}(\proj{5})$ is smooth at $(X')$ of dimension $1+
\delta_{X'}^{3}(-5)- {\rm sumext}(X)= 48$. We can also use
Proposition~\ref{themhigher} and check that $ 1 - \delta_{X'}^0(0)= 48.$
\end{example}

\end{document}